\def\ps@pprintTitle{%
 \let\@oddhead\@empty
 \let\@evenhead\@empty
 \def\@oddfoot{\centerline{\thepage}}%
 \let\@evenfoot\@oddfoot}
\newcommand{\Z}{\mathbb{Z}}
\newcommand{\G}{\Gamma}
\newcommand{\pres}[2]{\langle {#1}\ |\ {#2} \rangle}
\newcommand{\gpres}[1]{\langle {#1} \rangle}
\newcommand{\npres}[1]{\langle\langle {#1} \rangle\rangle}
\newtheorem{theorem}{Theorem}
\newtheorem{maintheorem}{Theorem}
\newtheorem{lemma}[theorem]{Lemma}
\newtheorem{corollary}[theorem]{Corollary}
\newtheorem{remark}[theorem]{Remark}
\newtheorem{example}[theorem]{Example}
\newtheorem{maincorollary}[maintheorem]{Corollary}
\newtheorem{definition}[theorem]{Definition}
\def\<{\langle}
\def\>{\rangle}
\def\cd{\mathrm{cd}\,}
\def\gd{\mathrm{gd}\,}
\def\dis{}
\def\ra{\rightarrow}
\begin{document}
\begin{frontmatter}
\title{Coherence, subgroup separability, and metacyclic structures for a class of cyclically presented groups}
%% \title{Title\tnoteref{label1}}
%% \tnotetext[label1]{}
%% \author{Name\corref{cor1}\fnref{label2}}
%% \ead{email address}
%% \ead[url]{home page}
%% \fntext[label2]{}
%% \cortext[cor1]{}
%% \address{Address\fnref{label3}}
%% \fntext[label3]{}

\author[B]{W.A.Bogley}

\address[B]{Department of Mathematics, Kidder Hall 368, Oregon State University, Corvallis, OR 97331-4605, USA.}
\ead{Bill.Bogley@oregonstate.edu}
%}
\author[W]{Gerald Williams}
\address[W]{Department of Mathematical Sciences, University of Essex, Wivenhoe Park, Colchester, Essex CO4 3SQ, UK.}
\ead{Gerald.Williams@essex.ac.uk}
%}

\begin{abstract}
We study a class $\mathfrak{M}$ of cyclically presented groups that includes both finite and infinite groups and is defined by a certain combinatorial condition on the defining relations. This class includes many finite metacyclic generalized Fibonacci groups that have been previously identified in the literature. By analysing their shift extensions we show that the groups in the class $\mathfrak{M}$ are
are coherent, subgroup separable, satisfy the Tits alternative, possess finite index subgroups of geometric dimension at most two,
and that their finite subgroups are all metacyclic. Many of the groups in $\mathfrak{M}$ are virtually free, some are free products of metacyclic groups and free groups, and some have geometric dimension two. We classify the finite groups that occur in $\mathfrak{M}$, giving extensive details about the metacyclic structures that occur, and we use this to prove an earlier conjecture concerning cyclically presented groups in which the relators are positive words of length three. We show that any finite group in the class $\mathfrak{M}$ that has fixed point free shift automorphism must be cyclic.
\end{abstract}

\begin{keyword}
%% keywords here, in the form: keyword \sep keyword
Cyclically presented group \sep Fibonacci group \sep metacyclic group \sep coherent \sep subgroup separable \sep geometric dimension \sep Tits alternative.

%% MSC codes here, in the form: \MSC code \sep code
%% or \MSC[2008] code \sep code (2000 is the default)
\MSC 20F05 \sep 20F16 \sep 20E05 \sep 20E06 \sep 20E08 \sep 20F18 \sep 57M05.
\end{keyword}
\end{frontmatter}

\section{Introduction}\label{sec:intro}

Given a positive integer $n$, let $F$ be the free group with basis $x_0, \ldots, x_{n-1}$ and let $\theta: F \rightarrow F$ be the \em shift automorphism \em  given by $\theta(x_i) = x_{i+1}$ with subscripts modulo $n$. Given a word $w$ representing an element of $F$, the group
\[G_n(w) = \pres{x_0,\ldots, x_{n-1}}{w,\theta(w), \ldots, \theta^{n-1}(w)}\]
is called a \em cyclically presented group. \em The shift defines an automorphism of $G_n(w)$ with exponent $n$ and the resulting $\Z_n$-action on $G_n(w)$ determines the \em shift extension \em $E_n(w) = G_n(w) \rtimes_\theta \Z_n$, which admits a two-generator two-relator presentation of the form
\begin{equation}\label{eqn:ExtPres}
E_n(w) = \pres{t,x}{t^n,W}
\end{equation}
where $W = W(x,t)$ is obtained by rewriting $w$ in terms of the substitutions $x_i = t^ixt^{-i}$ (see, for example, \cite[Theorem~4]{JWW74}). The shift on $G_n(w)$ is then realized by conjugation by $t$ in $E_n(w)$. The group $G_n(w)$ is recovered as the kernel of a retraction $\nu^0: E_n(w) \rightarrow \Z_n = \pres{t}{t^n}$ that trivializes $x$.

In the fifty years since Conway's question in~\cite{Conway65}, the systematic study of cyclically presented groups has led to the introduction and study of numerous families of cyclic presentations with combinatorial structure determined by various parameters. These include the \em Fibonacci groups \em  $F(2,n) = G_n(x_0x_1x_2^{-1})$~\cite{Conway65} and a host of generalizing families including the groups $F(r,n)$~\cite{JWW74}, $F(r,n,k)$ \cite{CR74}, $R(r,n,k,h)$~\cite{CR75CMB}, $H(r,n,s)$~\cite{CR75LMS}, $F(r,n,k,s)$~\cite{CR79} (see also, for example,~\cite{Thomas91}), $P(r,n,l,s,f)$~\cite{Prishchepov95},\cite{Williams12}, $H(n,m)$~\cite{GilbertHowie95}, $G_n(m,k)=G_n(x_0x_mx_k^{-1})$~\cite{JM75,CHR98,BV03,Williams12b}, and the groups $G_n(x_0x_kx_l)$~\cite{CRS05,EW10,Bogley14}.

A number of these articles identify infinite families of cyclic presentations that define finite metacyclic groups. One purpose of this article is to give a unified explanation for results of this type that appeared in~\cite{CR75EMS},\cite{CR75CMB},\cite{CR75LMS},\cite{EW10},\cite{JWW74}, see Corollaries~\ref{cor:MetaCanadian}--\ref{cor:MetaFrnks} below. Our approach leads to the identification of new infinite families of cyclic presentations that define finite metacyclic groups, including but not limited to the class of presentations treated in~\cite{Prishchepov95}, see Corollary~\ref{cor:MetaPrisch} and Corollary~\ref{cor:Gshort}.

All of the cyclic presentations discussed in the preceding paragraph have a common combinatorial form. Given integers $f$ and $r \geq 0$, let
\begin{equation}
\Lambda(r,f) = \prod_{i=0}^{r - 1} x_{if}
\end{equation}
be the positive word of length $r$ consisting of equally spaced generators, starting with $x_0$ and having \em step size \em $f$ (where an empty product denotes the empty word). We call any shift of $\Lambda(r,f)$ or its inverse an \em $f$-block \em and we say that a cyclically presented group $G$ is of \em type \em $\mathfrak{F}$ if there is an integer $f$ such that $G \cong G_n(w)$ where $w$ is a product of two $f$-blocks. All of the cyclically presented groups mentioned in the previous paragraphs are of type~$\mathfrak{F}$. For example, any word $w$ of length three defines a group $G_n(w)$ of type~$\mathfrak{F}$. The groups $F(r,n)$ are defined by the word $w = x_0x_1\ldots x_{r-1}x_r^{-1}$, which is the product of two one-blocks having lengths $r\geq 1$ and one. The fact that every finite group is a quotient of a group of the form $F(r,n)$~\cite[Theorem~2]{JWW74} indicates the diversity of the class of groups of type~$\mathfrak{F}$ and the need to carefully identify subclasses of these groups in order to prove general results. A starting point is the following well-known combinatorial description of the shift extension.

\begin{lemma}\label{lemma:F}
A cyclically presented group $G$ is of type~$\mathfrak{F}$ if and only if there exist integer parameters $r,n,s,f,A,B$ such that $(r-s)f \equiv B-A$ mod $n$ and $G$ is isomorphic to the kernel of the retraction of the group

\begin{equation}\label{eqn:extF}
E = \pres{t,y}{t^n, y^rt^Ay^{-s}t^{-B}}.
\end{equation}
onto the cyclic subgroup of order $n$, generated by $t$, that is given by $\nu^f(t) = t$ and $\nu^f(y) = t^f$.
\end{lemma}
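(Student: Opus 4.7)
The plan is to work inside the shift extension $E_n(w)$ and exploit the change of variable $y = xt^f$, which converts the $f$-block structure of the cyclic relator into the parametric form $y^rt^Ay^{-s}t^{-B}$ of~\eqref{eqn:extF}. The cornerstone identity, proved by a short induction using $x_i = t^ixt^{-i}$ together with $x = yt^{-f}$, is that a shifted $f$-block rewrites as a monomial in $(t,y)$:
\[
\theta^C\Lambda(d,f) \;=\; t^C y^d t^{-df-C},
\qquad
(\theta^C\Lambda(d,f))^{-1} \;=\; t^{df+C} y^{-d} t^{-C}.
\]

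For the forward direction, I would start with $G \cong G_n(w)$ where $w = \theta^{C_1}\Lambda(d_1,f)^{\epsilon_1}\cdot\theta^{C_2}\Lambda(d_2,f)^{\epsilon_2}$ is a product of two $f$-blocks. Applying the identity to each block, the relator $W$ of $E_n(w) = \pres{t,x}{t^n,W}$ takes the shape $t^{*}y^{\pm d_1}t^{*}y^{\pm d_2}t^{*}$; cyclically conjugating by a suitable power of $t$ to pull a $y$ to the front yields a relator of the form $y^rt^Ay^{-s}t^{-B}$ with $r,s$ integers whose signs track the $\epsilon_i$. The congruence $(r-s)f \equiv B-A \pmod n$ is forced because it is exactly the condition that the retraction $\nu^f$ be well-defined, and $\nu^f$ is nothing but the original retraction $\nu^0$ of $E_n(w)$ read in the new coordinates (note $\nu^0(y) = \nu^0(xt^f) = t^f$), so $\ker\nu^f = \ker\nu^0 = G_n(w)$.

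For the converse, given $E = \pres{t,y}{t^n,y^rt^Ay^{-s}t^{-B}}$ with $(r-s)f \equiv B-A \pmod n$, the congruence ensures that $\nu^f$ is a well-defined retraction by direct evaluation on the relator. Setting $x = yt^{-f}$, Reidemeister-Schreier on the transversal $\{1,t,\dots,t^{n-1}\}$ presents $\ker\nu^f$ on generators $x_i = t^ixt^{-i}$ with one relation per coset obtained by rewriting $t^iWt^{-i}$. The identity above, together with its companion $y^{-d} = t^{-df}\Lambda(d,f)^{-1}$ for $d>0$, expands $W$ to $\tilde w \cdot t^{(r-s)f + A - B}$, where $\tilde w$ is a product of two $f$-blocks in the $x_j$; the congruence makes the $t$-tail a power of $t^n$, hence trivial in $E$, so the relations in $\ker\nu^f$ are precisely the shifts $\theta^i(\tilde w)$, giving $\ker\nu^f \cong G_n(\tilde w)$, a group of type~$\mathfrak{F}$.

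The main source of friction is the bookkeeping across the four sign cases $(\epsilon_1,\epsilon_2)\in\{\pm 1\}^2$: to accommodate all four combinations in the single uniform relator $y^rt^Ay^{-s}t^{-B}$, the parameters $r,s$ must be allowed to be arbitrary integers rather than only non-negative, which forces the use of either the positive or the negative form of the monomial identity depending on the signs. Beyond this bookkeeping, the argument is an exercise in tracking $t$-exponents via the shift identity $t^c x_i t^{-c} = x_{i+c}$.
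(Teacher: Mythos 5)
Your proposal is correct and follows essentially the same route as the paper: the substitution $y = xt^f$ (equivalently $x_i = t^ixt^{-i}$) telescopes each $f$-block into a monomial $t^Cy^dt^{-df-C}$, yielding the relator $y^rt^Ay^{-s}t^{-B}$ after cyclic conjugation, with the congruence $(r-s)f\equiv B-A$ arising as the well-definedness condition for $\nu^f$ (the paper instead normalizes $w$ up to cyclic permutation and inversion first and records explicit parameter assignments in two sign cases, but this is only a bookkeeping difference). The converse via Reidemeister--Schreier on the transversal $\{1,t,\dots,t^{n-1}\}$ is exactly what the paper does, citing \cite[Theorem~2.3]{Bogley14}.
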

\begin{proof}
Working in the free group $F$ with basis $x_0, \ldots, x_{n-1}$, we are given that $G \cong G_n(w)$ where $w$ is the product of two $f$-blocks, so that $w = \theta^{a_1}(\Lambda(r_1,f))^{\delta} \cdot \theta^{a_2}(\Lambda(r_2,f))^{\epsilon}$ where $r_1, r_2 \geq 0$ and $\delta, \epsilon = \pm 1$. Up to cyclic permutation and inversion, we may assume that $a_1 = 0$ and $\delta = +1$. Setting $a = a_2$, the word $w$ has the form $w = \Lambda(r_1,f) \cdot \theta^{a}(\Lambda(r_2,f))^{\epsilon}$. Introducing $x,y,t$ and setting $x_i = t^ixt^{-i}$ and $y = xt^f$, the word $w$ transforms to $W = y^{r_1}t^{-r_1f} \cdot t^a(y^{r_2} t^{-r_2f})^\epsilon t^{-a}$ and the shift extension $G_n(w) \rtimes_\theta \Z_n$ admits a presentation $\pres{y,t}{t^n, W}$. If $\epsilon = +1$, then the first claim follows by setting $r = r_1$, $s = -r_2$, $A = a-r_1f$, and $B = a+r_2f$, while if $\epsilon = -1$, we set $r = r_1, s = r_2, A = a+(r_2-r_1)f$, and $B = a$. Conversely, given the retraction $\nu^f: E \ra \Z_n$, a standard Reidemeister-Schreier process (see e.g.~\cite[Theorem~2.3]{Bogley14}) shows that $\ker \nu^f \cong G_n(w)\cong G$ where $w$ is the product of two $f$-blocks.
\end{proof}

\begin{example}[Non-isomorphic cyclically presented groups sharing the same extension]\label{ex:commensurable}
\em The generalized Fibonacci group $F(3,6)$ is $G_6(x_0x_1x_2x_3^{-1})$ and has extension $$E = F(3,6) \rtimes_\theta \Z_6 \cong \pres{t,x}{t^6, xtxtxtx^{-1}t^{-3}} \stackrel{y=xt}{\cong} \pres{t,y}{t^6,y^3ty^{-1}t^{-3}}.$$ The extension $E$ admits retractions $\nu^f: E \rightarrow \Z_6 = \gpres{t}$ satisfying $\nu^f(t) = t$ and $\nu^f(y) = t^f$ when $2f-2\equiv 0$~mod~$6$, i.e.\,for $f = 1,4$~mod~$6$. As described at (\ref{eqn:extF}) in~\cite[Theorem 2.3]{Bogley14}, the kernel of $\nu^1$ is $F(3,6) = G_6(x_0x_1x_2x_3^{-1})$ with the generators $x_i = t^ixt^{-i} = t^iyt^{-1-i} \in E$. The kernel of $\nu^4$ is the cyclically presented group $G_6(y_0y_4y_2y_3^{-1})$ with generators $y_i = t^iyt^{-4-i} \in E$. Renaming the generators of this group as $u_i = y_{-i}$ with subscripts modulo $6$, the kernel of $\nu^4$ is isomorphic to the group $G_6(u_0u_2u_4u_3^{-1})$, which is the group $R(3,6,5,2)$ of~\cite{CR75CMB}. The group $E$ has order 9072, so as observed in~\cite{CR75CMB} the orders of $F(3,6)$ and $R(3,6,5,2)$ coincide, both being equal to~1512. The Sylow subgroup structure of the groups $F(3,6)$ and $R(3,6,5,2)$ are described in~\cite[page~165]{CR75ToddCox} and~\cite[page~175]{CR75CMB}, respectively. A calculation in GAP~\cite{GAP} shows that $F(3,6)$ is solvable with derived length 4, whereas $R(3,6,5,2)$ is solvable with derived length~3. Thus $F(3,6)\not \cong R(3,6,5,2)$. (For completeness we note that $E$ is solvable with derived length 4.)

Now the fact that
$$
\nu^4(x_i) = \nu^4(x) = \nu^4(yt^{-1}) = t^3 \in \pres{t}{t^6} \cong \Z_6
$$
implies that the intersection $\ker \nu^1 \cap \ker \nu^4$ has index two in $\ker \nu^1 \cong F(3,6)$. Thus the non-isomorphic groups $F(3,6)$ and $R(3,6,5,2)$ share the common extension $E$ and they meet in a common normal subgroup of index two within that extension.
\em\end{example}

Our main results involve a number of concepts pertaining to infinite groups. The following information on dimensions of groups may be found in~\cite{Brown10}. The \em geometric dimension \em ($\gd (G)$) of a group $G$ is the least nonnegative integer $d$ such that $G$ is the fundamental group of an aspherical CW complex of dimension~$d$. If no such $d$ exists, $\gd (G)=\infty$. The \em cohomological dimension \em ($\cd (G)$) of $G$ is the least nonnegative integer $d$ such that there exists a projective $\Z G$-resolution of the trivial module~$\Z$ of length at most $d$, or  $\cd (G)=\infty$ if no such $d$ exists. For any $G$ we have $\cd (G)\leq \gd (G)$ and if $H$ is a subgroup of $G$ then $\cd (H)\leq \cd (G)$ and $\gd (H)\leq \gd (G)$. We have $\cd (G)=0$ if and only if $\gd (G)=0$ if and only if $G=1$, and by a theorem of Stallings~\cite{Stallings68} and Swan~\cite{Swan69} we have $\cd (G)=1$ if and only if $\gd (G)=1$ if and only if $G$ is free and nontrivial (see~\cite[Example~2.4(b)]{Brown10}). Thus if $\gd (G)\leq 2$ then $\cd (G)=\gd (G)$. In cases where $\cd (G)=\gd (G)=d$, we say that $G$ is \em $d$-dimensional\em. Any group of finite cohomological dimension is torsion-free (see~\cite[Example~2.4(e)]{Brown10}) and a theorem of Serre (see~\cite[Theorem~4.4]{Brown10}) provides that if $H$ is a subgroup of finite index in a torsion-free group $G$, then $\cd (H) = \cd (G)$. Thus it follows that if $G$ possesses a finite index subgroup of geometric dimension at most two, then $G$ is virtually torsion-free, all torsion-free subgroups have geometric dimension at most two, and all torsion free subgroups of finite index have the same geometric dimension. Similarly, in a group that is \textit{virtually free} in the sense that it possesses a free subgroup of finite index, all torsion-free subgroups are free.

A group is \em coherent \em if each of its finitely generated subgroups is finitely presented. A group is \em subgroup separable \em or \em LERF \em if each of its finitely generated subgroups is the intersection of finite index subgroups. For finitely presented groups, subgroup separability implies that the membership problem is decidable for finitely generated subgroups~\cite{Malcev83}. In addition, any group that is subgroup separable is residually finite and so, if finitely generated, is Hopfian~\cite{Malcev83}. We make repeated use of the fact that if $H$ is a finite index subgroup of $K$, then $H$ is subgroup separable if and only if $K$ is subgroup separable~\cite[Lemma~1.1]{Scott78} (see also~\cite[Lemma~2.16]{Wise00}).

\begin{example}[Cyclically presented groups that are neither coherent nor subgroup separable]\label{ex:notsbgpsep}
\ \newline \em \noindent (i) Let $G=G_8(x_0x_1x_3)$, observing that $w=x_0x_1x_3$ is the product of two blocks with the same step size. A calculation in GAP~\cite{GAP} shows that the subgroup $H$ of $G$ generated by $$\{x_0x_4, x_1x_5, x_2x_6, x_3x_7,x_4x_0,x_5x_1, x_6x_2, x_7x_3\}$$ is normal, isomorphic to the right angled Artin group $F_4\times F_4$, and $G/H\cong \Z_3\times \Z_3$.
It follows that $G$ and its shift extension $E=\pres{y,t}{t^8,y^2tyt^4}$ are residually finite.  However, since
the direct product of free groups $F_n\times F_n$ ($n\geq 2$) contains a finitely generated subgroup that has undecidable membership problem~\cite{Mikhailova58} (or see~\cite[IV. Theorem~4.3]{LyndonSchupp}) it follows that neither $G$ nor its shift extension $E$ is subgroup separable, and since $F_n\times F_n$ ($n\geq 2$) is not coherent~\cite{Grunewald78}, neither $G$ nor $E$ is coherent. We thank Andrew Duncan for a helpful conversation that led to this observation. We record that in~\cite[page~774]{EW10} it was erroneously stated that $H$ is isomorphic to $\Z^8$.

\noindent(ii) Let $G=G_n(x_0x_1x_0^{-1}x_1^{-1})$ and note that $G$ is a right angled Artin group and that for $n=4$ we have that $G\cong F_2\times F_2$. Since right angled Artin groups are linear~\cite{Humphries94} (see also~\cite[Corollary~3.6]{HsuWise99}), the group~$G$ is residually finite. However, if $n\geq 4$ then $G$ is neither subgroup separable (by~\cite[Theorem~2]{MetaftsisRaptis08}) nor coherent (by~\cite[Theorem~1]{Droms87}). (If $n=2$ or $3$ then $G\cong \Z^n$ so it is both subgroup separable and coherent.) The extension $E=G\rtimes_\theta \Z_n=\pres{x,t}{t^n,xtxt^{-1}x^{-1}tx^{-1}t^{-1}}$ admits a retraction $\nu^f : E\rightarrow \Z_n=\gpres{t}$ satisfying $\nu^f(t)=t, \nu^f(x)=t^f$ for each $0\leq f\leq n-1$ with kernel $H=G_n(x_0x_{f+1}x_f^{-1}x_1^{-1})$. The conclusions for $G$, noted above, therefore also apply to $E$ and to $H$.
\em
\end{example}

We now introduce the groups that are the subject of this paper. We consider cyclically presented groups $G$ of type~$\mathfrak{F}$ that satisfy a certain condition relating the step size, block lengths, and signs occurring on the blocks that comprise the defining relator~$w$.

\begin{definition} \em A cyclically presented group $G$ is of \em type \em $\mathfrak{M}$ if $G \cong G_n(w)$ where $w$ is a product of two $f$-blocks having lengths $r_1$ and $r_2$; that is,
\begin{equation}
w = \Lambda(r_1,f) \cdot \theta^{a}(\Lambda(r_2,f))^{\epsilon},
\end{equation}
for some $0\le a\leq n-1$ where $(r_1+\epsilon r_2)f \equiv 0$ mod $n$.\em
\end{definition}
\noindent The cyclically presented groups of type~$\mathfrak{M}$ are those for which the shift extension, described in Lemma~\ref{lemma:F}, admits a particularly simple retraction onto the cyclic subgroup of order $n$ generated by $t$. This enables a uniform analysis of all of the cyclically presented groups of type~$\mathfrak{M}$.

\begin{lemma} A cyclically presented group $G$ is of type~$\mathfrak{M}$ if and only if there exist integer parameters $r,n,s,f,A$ such that $(r-s)f \equiv 0$ mod $n$ and $G$ is isomorphic to the kernel of the retraction of the group
\begin{alignat}{1}%\label{eqn:extM}
E &= \pres{t,y}{t^n, y^rt^Ay^{-s}t^{-A}}.\label{eq:EwithB=A}
\end{alignat}
onto the cyclic subgroup of order $n$, generated by $t$, that is given by $\nu^f(t) = t$ and $\nu^f(y) = t^f$.
\end{lemma}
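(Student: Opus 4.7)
The plan is to deduce this lemma directly from Lemma~\ref{lemma:F} by tracking how the type~$\mathfrak{M}$ condition $(r_1 + \epsilon r_2)f \equiv 0$ mod $n$ translates, under the parameter substitutions used in the proof of Lemma~\ref{lemma:F}, into the congruence $B \equiv A$ mod $n$ in the shift extension presentation (\ref{eqn:extF}).

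First I would start with the forward direction. Suppose $G$ is of type~$\mathfrak{M}$, so $w = \Lambda(r_1,f)\cdot \theta^a(\Lambda(r_2,f))^\epsilon$ with $(r_1+\epsilon r_2)f \equiv 0$ mod $n$. Lemma~\ref{lemma:F} immediately provides integers $r,n,s,f,A,B$ with $(r-s)f \equiv B-A$ mod $n$ and an isomorphism of $E_n(w)$ with (\ref{eqn:extF}). The key observation is that in the $\epsilon = +1$ assignment $r = r_1,\ s = -r_2,\ A = a-r_1 f,\ B = a+r_2 f$, one computes $B - A = (r_1+r_2)f = (r_1+\epsilon r_2)f$; and in the $\epsilon = -1$ assignment $r=r_1,\ s = r_2,\ A = a+(r_2-r_1)f,\ B = a$, one computes $B - A = (r_1 - r_2)f = (r_1+\epsilon r_2)f$. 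In either case the type~$\mathfrak{M}$ hypothesis is equivalent to $B \equiv A$ mod $n$, so the relator $t^n$ allows us to rewrite $y^r t^A y^{-s} t^{-B}$ as $y^r t^A y^{-s} t^{-A}$ in $E$, yielding the presentation (\ref{eq:EwithB=A}). The retraction $\nu^f$ carries over unchanged, and $(r-s)f \equiv B - A \equiv 0$ mod $n$ is immediate.

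For the converse, I would start from the presentation (\ref{eq:EwithB=A}) with $(r-s)f\equiv 0$ mod $n$, set $B := A$ so that trivially $(r-s)f \equiv B-A$ mod $n$, and invoke the Reidemeister--Schreier computation of Lemma~\ref{lemma:F} to identify $\ker\nu^f$ as $G_n(w)$ where $w$ is the product of two $f$-blocks of lengths $r_1, r_2$ with signs matching the correspondence above. Reading the same parameter assignments in reverse shows that $(r_1+\epsilon r_2)f \equiv B - A \equiv 0$ mod $n$, so $G$ is of type~$\mathfrak{M}$.

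The only real obstacle is the bookkeeping in verifying the identity $B - A = (r_1 + \epsilon r_2)f$ in both sign cases; once this is checked, the rest is a routine reuse of Lemma~\ref{lemma:F} together with the elementary observation that $t^B = t^A$ in $E$ whenever $B\equiv A$ mod $n$.
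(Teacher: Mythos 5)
Your proposal is correct and follows essentially the same route as the paper: the paper's proof likewise reduces to Lemma~\ref{lemma:F}, noting that under its parameter substitutions the condition $(r_1+\epsilon r_2)f\equiv 0$ mod $n$ is equivalent to $(r-s)f\equiv 0$ mod $n$ and forces $A\equiv B$ mod $n$, after which the relator $t^n$ lets one replace $t^{-B}$ by $t^{-A}$. Your explicit verification of $B-A=(r_1+\epsilon r_2)f$ in both sign cases is exactly the bookkeeping the paper leaves as ``routine.''
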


\begin{proof} Following the proof of Lemma~\ref{lemma:F}, the condition $(r_1+\epsilon r_2)f \equiv 0$~mod~$n$ is equivalent to the condition $(r-s)f \equiv 0$~mod~$n$ and implies that the parameters $A,B$ in~(\ref{eqn:extF}) satisfy $A \equiv B$~mod~$n$. The conclusions of the lemma follow routinely.
\end{proof}

Our strategy for identifying and analyzing the groups of type~$\mathfrak{M}$ is to work from the perspective of the shift extension $E$ and its retraction onto the cyclic group of order $n$, as determined by parameters $(r,n,s,f,A)$ where $(r-s)f \equiv 0$~mod~$n$.
To be explicit, the resulting group of type~$\mathfrak{M}$ has the form $G_n(w)$ where
\begin{alignat}{1}
w &= \begin{cases}
  \Lambda(r,f)\theta^A(\Lambda(s,f))^{-1} & \mathrm{if}~s\geq 0,\\
  \Lambda(r,f)\theta^{A+fr}(\Lambda(|s|,f))& \mathrm{if}~s\leq 0,
\end{cases}\nonumber\\
 &= \begin{cases}
		\prod_{i=0}^{r-1} x_{if}\left( \prod_{i=0}^{s-1}x_{A+if}\right)^{-1} & \mathrm{if}~s\geq 0,\\
		\prod_{i=0}^{r-1} x_{if}\prod_{i=0}^{|s|-1} x_{A+(r+i)f} & \mathrm{if}~s\leq 0.
\end{cases}\label{eqn:OurCyclicw}
\end{alignat}
\noindent We note that the condition $(r-s)f \equiv 0$ mod $n$ implies that $r-s$ is divisible by $n/(n,f)$ and that this latter condition plays a role in several of our arguments. The special nature of the shift extension for a group of type~$\mathfrak{M}$ means that for all choices of $(r,n,s,A)$, we can take $f = 0$ and so the cyclically presented group $G_n(x_0^rx_A^{-s})$ is of type~$\mathfrak{M}$. Indeed, every cyclically presented group of type~$\mathfrak{M}$ is commensurable with one of this form. As such, the following lemma plays a central role in all that follows.

\begin{lemma}[{\cite{Pride87},\cite[Lemma~3.4]{BW15}}]\label{lem:Pridescyclic}
If $(\rho,\sigma)=1$ and $m\geq 1$ then $G_m(x_0^\rho x_1^{-\sigma})\cong \Z_{|\rho^m-\sigma^m|}$. Any of the elements $x_i$ ($0\leq i<m$) can serve as a sole generator.
\end{lemma}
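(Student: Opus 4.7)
The plan is to work directly with the defining relations $x_i^\rho = x_{i+1}^\sigma$ (indices modulo $m$) in $G = G_m(x_0^\rho x_1^{-\sigma})$, first showing that $G$ is cyclic and generated by any one of the $x_i$, and then computing its order via the abelianization.

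The crux is the pair of iterated identities
\[x_0^{\rho^j} = x_j^{\sigma^j} \qquad \text{and} \qquad x_j^{\rho^{m-j}} = x_0^{\sigma^{m-j}},\]
each proved by induction on $j$ using the base relation $x_i^\rho = x_{i+1}^\sigma$ together with the exponent interchange
\[(x_i^\sigma)^\rho = x_i^{\sigma\rho} = x_i^{\rho\sigma} = (x_i^\rho)^\sigma,\]
which is legitimate because every step manipulates powers of a single group element. Setting $j = m$ in the first identity gives the useful byproduct $x_0^{\rho^m - \sigma^m} = 1$, so $\langle x_0 \rangle$ already has order dividing $|\rho^m - \sigma^m|$.

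Coprimality $(\rho,\sigma) = 1$ enters now through Bezout: since $\gcd(\rho^{m-j}, \sigma^j) = 1$, pick integers $p,q$ with $p\rho^{m-j} + q\sigma^j = 1$ and combine the two identities to get
\[x_j = (x_j^{\rho^{m-j}})^p (x_j^{\sigma^j})^q = (x_0^{\sigma^{m-j}})^p (x_0^{\rho^j})^q \in \langle x_0 \rangle.\]
Hence $G = \langle x_0 \rangle$ is cyclic, and by the cyclic symmetry of the defining relations the same argument shows that any $x_i$ is an equally valid generator.

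To pin down the exact order I would compute $G^{ab}$, whose presentation matrix is the circulant $\rho I - \sigma S$ (with $S$ the cyclic shift); its determinant factors as $\prod_{\zeta^m = 1}(\rho - \sigma\zeta) = \rho^m - \sigma^m$. Since $G$ is already cyclic, hence abelian, $G \cong G^{ab} \cong \Z_{|\rho^m - \sigma^m|}$. The degenerate case $\rho^m = \sigma^m$ is forced by coprimality to $\rho,\sigma \in \{\pm 1\}$ and is absorbed by the convention $\Z_0 = \Z$, with direct verification in each subcase providing a sanity check. I expect the principal obstacle to be bookkeeping in the inductive exponent interchange — ensuring that the commutation of $\rho$ and $\sigma$ in the exponents really occurs inside a single cyclic subgroup at each stage — after which the remainder is routine Bezout arithmetic and a classical circulant determinant.
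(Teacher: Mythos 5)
Your argument is correct: the telescoping identity $x_i^{\rho^k}=x_{i+k}^{\sigma^k}$ is valid (the exponent interchange happens inside the cyclic subgroup $\langle x_{i+k}\rangle$, as you note), the case $k=m$ gives $x_0^{\rho^m-\sigma^m}=1$, the Bezout step $p\rho^{m-j}+q\sigma^j=1$ legitimately puts each $x_j$ into $\langle x_0\rangle$, and the circulant determinant $\det(\rho I-\sigma S)=\rho^m-\sigma^m$ pins down the order of the (now abelian) group, with the degenerate cases handled. Note, however, that the paper does not prove this lemma at all: it is quoted from Pride and from \cite[Lemma~3.4]{BW15}, so there is no in-paper proof to match. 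Your route is essentially the standard one and is consistent with the two places where the paper effectively reproduces pieces of it: the proof of Lemma~\ref{lemma:M} performs the same telescoping computation (there in the form $u^{\rho^m}=u^{\sigma^m}$ inside the semidirect product $M$), and the proof of Theorem~\ref{thm:fpf} invokes the cited proof to extract a single explicit exponent $\beta=\rho(\sigma^{m-1}a+b)$, with $a\rho^m+b\sigma=1$, such that $x_{i+1}=x_i^{\beta}$. That last point is the one substantive difference: your per-index Bezout combination on $(\rho^{m-j},\sigma^j)$ proves cyclicity but does not directly yield the uniform formula $x_{i+1}=x_i^\beta$, which is what the paper later needs to compute $\mathrm{Fix}(\theta^j)$; if you wanted your proof to serve the same downstream purpose, you would run Bezout once on $(\rho^m,\sigma)$ and iterate.
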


\begin{example}[Cyclically presented groups $G_n(x_0x_kx_l)$ that lie in~$\mathfrak{M}$]\label{ex:ConditionC}
\em
In~\cite{EW10}, the groups $G=G_n(x_0x_kx_l)$ were studied in terms of four conditions (A),(B),(C), (D) where, in particular:
\begin{itemize}
  \item[(A)] $n\equiv 0$~mod~$3$ and $k+l\equiv 0$~mod~$3$;
  \item[(C)] $3l \equiv 0$~mod~$n$ or $3k \equiv 0$~mod~$n$ or $3(l-k)\equiv 0$~mod~$n$.
\end{itemize}
Observe again that the defining word $w=x_0x_kx_l$ is a product of two blocks with the same step size.
By cyclically permuting the relators we have that $G_n(x_0x_kx_l)\cong G_n(w')\cong G_n(w'')\cong G_n(w''')$ where
\begin{alignat*}{1}
w'&=(x_0x_k)x_l=(x_0x_{f'})x_{2f'+A'}\\
w''&=(x_0x_{l-k})x_{n-k}=(x_0x_{f''})x_{2f''+A''}\\
w'''&=(x_0x_{n-l})x_{k-l}=(x_0x_{f'''})x_{2f'''+A'''}
\end{alignat*}
where $f'=k,f''=l-k,f'''=n-l$, $A'=l-2k,A''=k-2l,A'''=k+l$. Then each of $w',w'',w'''$ are of the form~(\ref{eqn:OurCyclicw}) where $r=2,s=-1$. With these values of $r,s$ the condition $f(r-s)\equiv 0$~mod~$n$ becomes $3f\equiv 0$~mod~$n$ and so the groups $G_n(w'),G_n(w''),G_n(w''')$ therefore lie in the class~$\mathfrak{M}$ when $3f'\equiv 0$, $3f''\equiv 0$, $3f'''\equiv 0$~mod~n (respectively). That is, if condition (C) holds then $G_n(x_0x_kx_l)$ lies in~$\mathfrak{M}$.
\em
\end{example}

As a corollary of one of our main results concerning groups in~$\mathfrak{M}$ we prove a conjecture and reprove a lemma from~\cite{EW10} that together deal with the groups $G_n(x_0x_kx_l)$ when condition~(C) holds.

Theorem~\ref{thm:Esubgroups} below  concerns the structure of the group $E$; part (d) should be compared with the fact that the class of Baumslag-Solitar groups $B(r,s) = \pres{t,y}{y^rty^{-s}t^{-1}}$ includes both non-Hopfian groups and non-residually finite groups \cite{BS62}. Recall that a group $G$ is \em metacyclic \em if it has a cyclic normal subgroup $K$ such that $G/K$ is cyclic. (Metacyclic groups have derived length~2, so the groups in Example~\ref{ex:commensurable} are not metacyclic.)

\begin{maintheorem}\label{thm:Esubgroups} Given integer parameters $(r,n,s,A)$ with $n > 0$, let $E =$ \linebreak $E(r,n,s,A)$ be the group with presentation $\pres{t,y}{t^n,y^rt^Ay^{-s}t^{-A}}$. Then:
\begin{enumerate}
\item[(a)] Each finite subgroup of $E$ is metacyclic;
\item[(b)] The group $E$ is virtually free if $r^{n/(n,A)} \neq s^{n/(n,A)}$ or if $r=s=0$;
\item[(c)] The group $E$ is finite if and only if $r^{n/(n,A)} \neq s^{n/(n,A)}$ and either
\begin{itemize}
\item[(i)] $(n,A) = (r,s) = 1$, in which case $E$ is metacyclic of order $n|r^n-s^n|$, or
\item[(ii)] $E = \gpres{t} \cong \Z_n$, which happens if and only if $|r-s| = 1$ and either $rs = 0$ or $A \equiv 0 \mod n$;
\end{itemize}
\item[(d)] The group $E$ is coherent and subgroup separable and possesses a finite index subgroup of geometric dimension at most two.
\end{enumerate}
\end{maintheorem}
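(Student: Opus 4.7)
The plan is to study $E$ through the retraction $\nu^0\colon E\to\Z_n=\langle t\rangle$ that sends $y\mapsto 1$. A Reidemeister--Schreier rewrite identifies its kernel $N$ with the cyclically presented group $G_n(x_0^r x_A^{-s})$, and the orbits of the permutation $i\mapsto i+A$ on $\Z_n$ split this as a free product
\[
N\;\cong\;\mathop{\ast}_{j=1}^{(n,A)} H_m(r,s),\qquad H_m(r,s)=\langle v_0,\ldots,v_{m-1}\mid v_i^r=v_{i+1}^s\rangle,
\]
with $m=n/(n,A)$. Since $E=N\rtimes\Z_n$ is a finite extension of $N$, each conclusion of the theorem will be derived from a corresponding statement about $N$ via standard transfer results (Scott's lemma for subgroup separability, Serre's theorem for cohomological and geometric dimension, and routine facts for coherence and finite subgroups).

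The structural core is the analysis of $H_m(r,s)$. Iterating the relations yields $v_i^{r^m}=v_i^{s^m}$ for every $i$, so when $r^m\neq s^m$ each $\langle v_i\rangle$ is finite cyclic of order dividing $|r^m-s^m|$; then $H_m(r,s)$ is the fundamental group of a finite graph of finite cyclic groups on a cycle and hence is virtually free by the theorem of Karrass--Pietrowski--Solitar, and is cyclic of order $|r^m-s^m|$ when $(r,s)=1$ by Lemma~\ref{lem:Pridescyclic}. When $r^m=s^m$ and $(r,s)\neq(0,0)$, equivalently $r=s\neq 0$ or $r=-s$ with $m$ even, the element $c=v_0^r$ is central of infinite order and $H_m(r,s)/\langle c\rangle\cong\Z_{|r|}\ast\cdots\ast\Z_{|r|}$ with $m$ factors; this exhibits $H_m(r,s)$ as a Seifert-type central $\Z$-extension of a virtually free group, which is coherent and subgroup separable (by arguments of Scott on $3$-manifold groups and their generalisations), and pulling back a free finite-index subgroup of the quotient through the central $\Z$ (whose extension splits over a free group) produces a finite-index subgroup of the form (free)$\times\Z$, of geometric dimension two. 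The residual case $r=s=0$ gives $H_m=F_m$, free.

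Parts (a), (b), (d) follow quickly. For (a), a finite subgroup $F\leq E$ meets $N$ in a finite subgroup of the free product which, by Bass--Serre, is conjugate into some vertex group $\langle v_i\rangle$ and is therefore cyclic, while $F/(F\cap N)\hookrightarrow\Z_n$ is also cyclic, so $F$ is cyclic-by-cyclic, i.e.\ metacyclic. For (b), the stated hypotheses make each $H_m(r,s)$ virtually free, so $N$ and hence $E$ are virtually free. For (d), coherence, subgroup separability, and the existence of a finite-index subgroup of $\mathrm{gd}\leq 2$ are all preserved by finite free products and then pass to the finite extension $E\supset N$.

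For (c), $E$ is finite iff $N$ is, and a free product of non-trivial groups is finite only with a single non-trivial factor, forcing either $(n,A)=1$ with $H_n(r,s)$ finite, or else every $H_m(r,s)=1$. In the first case, if $(r,s)=d>1$ then $v_i\mapsto v_i$ defines a surjection of $H_m(r,s)$ onto the infinite group $\Z_d\ast\cdots\ast\Z_d$ ($m$ copies), a contradiction; so $(r,s)=1$, and $r^n=s^n$ would make $H_n(r,s)$ infinite in the Seifert case, giving $r^n\neq s^n$, whence Lemma~\ref{lem:Pridescyclic} yields $N\cong\Z_{|r^n-s^n|}$ and $E$ metacyclic of order $n|r^n-s^n|$, i.e.\ (i). In the second case, a direct analysis of when the cyclic presentation $G_m(v_0^r v_1^{-s})$ collapses to the trivial group produces $|r-s|=1$ together with $rs=0$ or $A\equiv 0\pmod n$, giving (ii). The principal obstacle throughout is the uniform structural control of $H_m(r,s)$ when $(r,s)>1$, where Lemma~\ref{lem:Pridescyclic} is unavailable; but the iteration identity $v_i^{r^m-s^m}=1$ (bounding orders above) together with the surjection onto $\Z_d\ast\cdots\ast\Z_d$ (bounding from below) jointly substitute for it.
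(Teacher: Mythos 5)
Your overall architecture (work inside the kernel $N=\ker\nu^0\cong G_n(x_0^rx_A^{-s})$, split it as a free product of $(n,A)$ copies of $H_m(r,s)=\pres{v_0,\ldots,v_{m-1}}{v_i^r=v_{i+1}^s}$, and transfer coherence, LERF, dimension and finite-subgroup information back to the finite extension $E$) is sound, and your treatment of the case $r^m=s^m$ via the central element $c=v_0^r$ and the finite-index subgroup $F_k\times\Z$ is a legitimate alternative to the paper's route (which instead splits $E$ itself as $\Z_n\ast_{\Z_m}M\ast\gpres{y}$ with $M$ split metacyclic, and for LERF exhibits $G_m(x_0^rx_1^{-s})$ as a star-shaped tree of infinite cyclic groups and quotes Tretkoff). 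However, there is a genuine gap at the structural core. When $r^m\neq s^m$ you assert that, because each $\langle v_i\rangle$ is finite, $H_m(r,s)$ is ``the fundamental group of a finite graph of finite cyclic groups on a cycle.'' This is not justified and is false as literally stated: the fundamental group of a graph of groups whose underlying graph is an $m$-cycle carries an HNN stable letter for the edge outside a spanning tree, so its presentation has $m+1$ generators; $H_m(r,s)$ is instead the quotient of the path-shaped tree product by one further relation, and a quotient of an amalgam by an extra relation is not a graph of groups. Knowing that every $v_i$ has finite order (from $v_i^{r^m-s^m}=1$) does not by itself give any graph-of-finite-groups decomposition --- plenty of non-virtually-free groups are generated by torsion. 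Since parts (a), (b) and the $r^m\neq s^m$ half of (d) all rest on this decomposition, the gap propagates through most of the theorem.

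The repair is essentially the device you declare ``unavailable'' when $(r,s)=g>1$: Lemma~\ref{lem:Pridescyclic} does apply, not to the $v_i$ themselves but to their $g$-th powers. Setting $y_i=v_i^g$, $\rho=r/g$, $\sigma=s/g$, the relations become $y_i^\rho=y_{i+1}^\sigma$, so the subgroup $Y=\gpres{y_0}=\cdots=\gpres{y_{m-1}}$ is cyclic of order $|\rho^m-\sigma^m|$ (finite in this case) and central, and $H_m(r,s)$ is the fundamental group of a \emph{star}-shaped tree of groups with central vertex $Y$ and leaves $\gpres{v_i}$, each edge group being $\gpres{y_i}$ of index $g$ in the leaf (this is exactly Figure~\ref{fig:star}, with $Y$ finite rather than infinite). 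That tree of finite cyclic groups gives virtual freeness for (b), cyclicity of finite subgroups for (a), and the LERF/coherence/dimension transfers for (d). Your upper bound $v_i^{r^m-s^m}=1$ and the surjection onto $\Z_g^{\ast m}$ control orders of elements but cannot substitute for this decomposition. Finally, in (c) the phrase ``a direct analysis \ldots produces $|r-s|=1$ together with $rs=0$ or $A\equiv 0$'' hides the one nontrivial step, namely ruling out $|r^m-s^m|=1$ with $rs\neq 0$ and $m\geq 2$ (the paper does this with a Mean Value Theorem estimate); that should be written out.
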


Since $G = G_n(w)$ is a finite index subgroup of the shift extension $E = E(r,n,s,A)$, we have the following corollary:

\begin{maincorollary}\label{cor:Gshort}  Let $G$ be the cyclically presented group in class $\mathfrak{M}$ determined by the integer parameters $(r,n,s,f,A)$ where $f(r-s)\equiv 0$~mod~$n$. All of the claims (a)--(d) for the group $E$ in Theorem~\ref{thm:Esubgroups} descend directly to subgroup $G$ of index $n$, with the obvious modifications to the group orders in statement~(c).
\end{maincorollary}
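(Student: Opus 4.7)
The key observation is that, by the lemma characterizing type~$\mathfrak{M}$ cyclically presented groups via their shift extensions, $G$ sits in $E = E(r,n,s,A)$ as the kernel of the retraction $\nu^f : E \to \Z_n = \gpres{t}$, and so $G$ is a subgroup of index $n$ in $E$. Consequently, every assertion in the corollary will follow from standard facts about the inheritance of group-theoretic properties by finite-index subgroups, applied to Theorem~\ref{thm:Esubgroups}.

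For (a), any finite subgroup of $G$ is \emph{a fortiori} a finite subgroup of $E$, hence metacyclic by Theorem~\ref{thm:Esubgroups}(a). For (b), if $F \leq E$ is free of finite index (under the arithmetic hypotheses of Theorem~\ref{thm:Esubgroups}(b)), then $F \cap G$ is free as a subgroup of $F$ and has finite index in $G$; thus $G$ is virtually free. For (c), since $[E:G] = n$ is finite, $G$ is finite if and only if $E$ is, and in case (i) of Theorem~\ref{thm:Esubgroups}(c) the order is $|G| = |E|/n = |r^n - s^n|$. In case (ii), where $E \cong \Z_n$, the index-$n$ subgroup $G$ is necessarily trivial; this degenerate situation is what the ``obvious modifications to the group orders'' absorbs.

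For (d), coherence passes to arbitrary subgroups, since any finitely generated subgroup of $G$ is a finitely generated subgroup of the coherent group $E$, and hence is finitely presented. Subgroup separability transfers between $G$ and $E$ by the Scott lemma~\cite{Scott78} already invoked in the introduction, using that $[E:G] = n < \infty$. Finally, if $H \leq E$ is a finite index subgroup with $\gd(H) \leq 2$ as supplied by Theorem~\ref{thm:Esubgroups}(d), then $H \cap G$ has finite index in $G$ and $\gd(H \cap G) \leq \gd(H) \leq 2$ by the monotonicity of geometric dimension under subgroup inclusion recalled in the introduction. The corollary therefore reduces entirely to the finite-index inclusion $G \trianglelefteq E$; no step presents a genuine obstacle beyond the minor bookkeeping for the orders in (c).
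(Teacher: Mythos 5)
Your proposal is correct and matches the paper's (implicit) argument: the paper derives Corollary~\ref{cor:Gshort} directly from the finite-index inclusion $G = \ker\nu^f \leq E$ with $[E:G]=n$, exactly via the standard inheritance facts you list for finite subgroups, virtual freeness, finiteness and orders, coherence, subgroup separability (Scott's lemma), and geometric dimension. No issues.
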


More detailed analysis is given in Section~\ref{sec:MStructure}. For the case where $r^{n/(n,A)} \neq s^{n/(n,A)}$ Theorem~\ref{thm:ends} describes which values of the parameters produce virtually free groups that are non-elementary and which produce virtually infinite cyclic ones. Whenever $(n,A)$ or $(r,s) \neq 1$, the group $E$ in (\ref{eq:EwithB=A}) splits nontrivially as an amalgamated free product. Given $f$ for which $f(r-s) \equiv 0$~mod~$n$, the subgroup theorem for graphs of groups implies that if $G = G_n(w)$ where $w$ is given as in (\ref{eqn:OurCyclicw}), then $G$ is expressible as the fundamental group of a graph of groups. In practice one can work out the graph of groups description in full detail. We use these techniques in the proof of our second main theorem, Theorem~\ref{mainthm:Mgroups}, below. We also use them to show that if $(n,A)(n,gf) = n(n,A,gf)$, where $g = (r,s)$, then every finite subgroup of $G$ is cyclic (Theorem \ref{thm:onlyCyclic}). When $r^{n/(n,A)} = s^{n/(n,A)}$ Theorem~\ref{thm:2dim} shows that in many cases the group $G$ is torsion-free and has geometric dimension equal to two, so is not a free group. In Corollary~\ref{cor:Tits} we use Theorem~\ref{thm:Esubgroups} and Theorem~\ref{thm:2dim} to show that $E$ (and hence $G$) satisfies a strong form of the Tits alternative. Namely that it either contains a finite index subgroup that surjects onto the free group of rank~2 or is virtually abelian; a group satisfying the former property is said to be \em large\em. Theorem~3.1 of~\cite{Button10} shows that deficiency one LERF groups are either large or of the form $F_l\rtimes \Z$ (for some $l\geq 0$). Corollary~\ref{cor:Tits} can be viewed as a starting point for explorations into similar results for deficiency zero LERF groups and for cyclically presented LERF groups.

A group $L$ is metacyclic if and only if it has cyclic subgroups $S,K$ with $K$ normal in $L$ such that $L=SK$ (this is called a \em metacyclic factorisation \em of $L$). If $L$ has a metacyclic factorisation $L=SK$ with $S\cap K=1$ then $L$ is a \em split metacyclic group \em and $S$ is the \em complement \em of $K$. Any finite metacyclic group $L$ arises in a metacyclic extension $\Z_M \hookrightarrow L \twoheadrightarrow \Z_N$ and has a presentation of the form
\begin{equation}\label{eqn:BeylPres}
B(M,N,R,\lambda) = \pres{a,b}{a^M = 1, bab^{-1} = a^R, b^N = a^{\lambda \frac{M}{(M,R-1)}}}
\end{equation}
where $R^N \equiv 1$ modulo $M$. See~\cite[Chapter IV.2]{BeylTappe82} or~\cite[Chapter 3]{Johnson80}. In fact, one can further assume that $\lambda$ is a divisor of $(M,R-1)(M,1+R+ \cdots + R^{N-1})/M$~\cite[IV.2.8]{BeylTappe82}, in which case $H_2(L) \cong \Z_\lambda$~\cite{BeylJones74}. In particular, when $L$ admits a balanced (or cyclic) presentation, then $H_2(L) = 0$, so we may take $\lambda = 1$; in this case $L$ admits a 2-generator, 2-relator presentation~\cite[IV.2.9]{BeylTappe82}. When $L$ is split metacyclic one can also take $\lambda = 0$ (see, for example, \cite[(2.5)]{BeylTappe82}).

The next result describes the structure of the groups in~$\mathfrak{M}$ when $r^{n/(n,A)}\neq s^{n/(n,A)}$ and $(r,s)=1$ and so is a refinement of Corollary~\ref{cor:Gshort}(b).

\begin{maintheorem}\label{mainthm:Mgroups} Let $G$ be the cyclically presented group in the class $\mathfrak{M}$ determined by the integer parameters $(r,n,s,f,A)$ where $f(r-s) \equiv 0$~mod~$n$. Suppose that $(r,s) = 1$ and that $\mu = |r^{n/(n,A)}-s^{n/(n,A)}| \neq 0$. Let
\[\bar{G} = B\left(\frac{(n,f)\mu}{n}, \frac{n(n,A,f)}{(n,A)(n,f)}, (s\hat{r})^{f\bar{\alpha}/(n,A,f)},1\right)\]
where $r\hat{r} \equiv 1$~mod~$\mu$ and $\alpha\bar{\alpha} \equiv 1$~mod~$n/(n,A)$ where $\alpha = A/(n,A)$. Then $G$ is a free product of $(n,A,f)$ copies of $\bar{G}$ together with the free group of rank $k=(n,A)-(n,A,f)$:
\[ G \cong \left(\ast_{i=1}^{(n,A,f)} \bar{G}\right) \ast F_k.\]
In particular, if
\begin{equation}
(n,A)(n,f) = n(n,A,f)\label{eq:cyclicGbar}
\end{equation}
then
\[ G \cong  \left(\ast_{i=1}^{(n,A,f)} \Z_{(n,f)\mu/n}\right) \ast F_k\]
is a free product of cyclic groups.
\end{maintheorem}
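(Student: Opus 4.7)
The plan is to realize the shift extension $E = E(r,n,s,A)$ as an amalgamated free product of a finite metacyclic group and a finite cyclic group, and to extract the decomposition of $G = \ker \nu^f$ via the Bass--Serre subgroup theorem. Set $d = (n,A)$. Iterating the defining relation $y^r = t^A y^s t^{-A}$ yields $y^{r^k} = t^{kA} y^{s^k} t^{-kA}$ for all $k \geq 1$; at $k = n/d$ the $t$-factors vanish because $t^{(n/d)A}$ is a power of $t^n$, so $y^{r^{n/d}} = y^{s^{n/d}}$ and hence $y^\mu = 1$ in $E$. Since $(r,s) = 1$, it follows that $(r,\mu) = (s,\mu) = 1$, so $y^s$ generates $\gpres{y} \cong \Z_\mu$ and the relation $t^A y^s t^{-A} = y^r$ makes $\gpres{y}$ normal in $H := \gpres{y,t^A}$. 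The abstract metacyclic group $\tilde H := \pres{y,u}{y^\mu, u^{n/d}, uy^s u^{-1} = y^r}$ is $\Z_\mu \rtimes \Z_{n/d}$ of order $\mu n/d$ (the compatibility condition $(r\hat s)^{n/d} \equiv 1 \bmod \mu$ is exactly $\mu \mid r^{n/d}-s^{n/d}$, where $s\hat s \equiv 1 \bmod \mu$). Identifying $u$ with $t^A$ inside $\pres{t}{t^n}$, the amalgam $\tilde H *_{\gpres{u}} \pres{t}{t^n}$ has the same presentation as $E$, so
\[E \cong H *_{\gpres{t^A}} \gpres{t}\]
is an amalgamated free product with cyclic edge group of order $n/d$; in particular $H \cong \tilde H$.

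Now apply the Bass--Serre subgroup theorem to $G \leq E$ acting on the associated tree $T$. Since $G$ is normal in $E$, every vertex (or edge) stabilizer of $G$ is conjugate in $E$ to one of $G \cap H$, $G \cap \gpres{t}$, or $G \cap \gpres{t^A}$; the latter two are trivial because $\nu^f|_{\gpres{t}}$ is the identity on $\Z_n$. The $G$-orbits on each $E$-orbit of simplices are in bijection with cosets of the $\nu^f$-image of the stabilizer: $\nu^f(H) = \gpres{f,A} \leq \Z_n$ has order $n/(n,A,f)$, giving $(n,A,f)$ orbits of type-$H$ vertices; $\nu^f(\gpres{t}) = \Z_n$ gives one orbit of type-$\gpres{t}$ vertex; and $\nu^f(\gpres{t^A}) = \gpres{A}$ has order $n/(n,A)$, giving $(n,A)$ edge orbits. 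The quotient $\Gamma = G \backslash T$ is connected with $(n,A,f) + 1$ vertices and $(n,A)$ edges, so has first Betti number $k = (n,A) - (n,A,f)$. Collapsing the trivial vertex and edge groups, the fundamental group of the resulting graph of groups is
\[G \cong \left( \ast_{i=1}^{(n,A,f)} V \right) \ast F_k, \quad V := G \cap H.\]

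It remains to identify $V$ with $\bar G$. As the kernel of $\nu^f|_H$, the subgroup $V$ is metacyclic of order $\mu(n,A,f)/(n,A)$. The hypothesis $(r-s)f \equiv 0 \bmod n$ implies $n \mid \mu f$ and hence $n/(n,f) \mid \mu$, so $V \cap \gpres{y} = \gpres{y^{n/(n,f)}} = \gpres{a}$ is cyclic of order $M = (n,f)\mu/n$, while the quotient $V/\gpres{a}$ is cyclic of order $N = n(n,A,f)/((n,A)(n,f))$. A Bezout-type calculation using $\alpha \bar\alpha \equiv 1 \bmod n/(n,A)$ produces an element $b \in V$ of the form $y^{a_0} u^{f\bar\alpha/(n,A,f)}$ whose image in $V/\gpres{a}$ is a generator; conjugating $a$ by a suitable choice of $b^{\pm 1}$ and using $uyu^{-1} = y^{r\hat s}$ gives $b a b^{-1} = a^R$ with $R = (s\hat r)^{f\bar\alpha/(n,A,f)} \bmod M$ (the sign choice for $b$ is what converts $r\hat s$ into $s\hat r$). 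The main technical step is verifying the remaining Beyl relation $b^N = a^{M/(M,R-1)}$, i.e.\ that $\lambda = 1$; this follows from a direct evaluation of $b^N$ in the semidirect product $H$ via the standard formula for powers in a metacyclic group, together with the constraint $|V| = MN$, which pins down the exponent of $a$ to the unique value consistent with the presentation. This identifies $V \cong B(M,N,R,1) = \bar G$. In the special case (\ref{eq:cyclicGbar}), the distributivity identity $(n,A)(n,f) = (n,A,f)(n,[A,f])$ (with $[A,f] = \mathrm{lcm}(A,f)$) yields $N = n/(n,[A,f]) = 1$, so $V = \gpres{a} \cong \Z_M$ is cyclic and the stated free product of cyclic groups follows.
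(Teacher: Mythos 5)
Your overall route is the same as the paper's: decompose $E$ as an amalgamated free product of $\gpres{t}$ and a split metacyclic group over the cyclic edge group $\gpres{t^A}$, let $G$ act on the Bass--Serre tree, count vertex and edge orbits via the double coset spaces $G\backslash E/H$ (equivalently via $\gpres{t}/\nu^f(H)$) to get the free product of $(n,A,f)$ copies of $V=G\cap H$ with $F_{(n,A)-(n,A,f)}$, and then identify $V$ by exhibiting generators $a\in V\cap\gpres{y}$ and $b$ mapping to a generator of the cyclic quotient $V/\gpres{a}$. All of that matches the paper's argument and is sound, including the divisibility checks that make $M=(n,f)\mu/n$ and $N=n(n,A,f)/((n,A)(n,f))$ integers.

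The genuine gap is the last step, where you claim that the Beyl parameter is $\lambda=1$ because ``a direct evaluation of $b^N$\,\dots\,together with the constraint $|V|=MN$\,\dots\,pins down the exponent of $a$ to the unique value consistent with the presentation.'' Neither half of this works. The direct evaluation gives $b^N=a^{e}$ for an explicit exponent $e$ (in the paper's notation $e=dQ/n$ with $Q=\frac{(n,A)}{(n,A,f)}\sum_{i=0}^{N-1}R^i$), and there is no reason for $e$ to equal $M/(M,R-1)$ on the nose; it only exhibits $V$ as $B(M,N,R,\lambda)$ for the specific value $\lambda=\frac{e(M,R-1)}{M}$. And the order constraint cannot rescue this, because $|B(M,N,R,\lambda)|=MN$ for \emph{every} admissible $\lambda$, while the isomorphism type genuinely depends on $\lambda$: for instance $B(4,2,1,1)\cong\Z_8$ whereas $B(4,2,1,0)\cong B(4,2,1,2)\cong\Z_4\times\Z_2$, all of order $8$. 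The missing ingredient is the paper's appeal to the Schur multiplier: $V=\ker\nu^f|_M$ is itself a finite cyclically presented (hence deficiency zero) group, so $H_2(V)=0$, and Beyl's normalization theorem for metacyclic groups with trivial multiplier then gives $B(M,N,R,\lambda)\cong B(M,N,R,1)$. Without this (or an honest verification that $e\equiv M/(M,R-1)\bmod M$, which is not what the computation yields), the identification $V\cong\bar{G}$ is not established.
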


The following corollary gives a complete identification of the structure of the groups $G_n(x_0x_kx_l)$ when condition (C) holds (see Example~\ref{ex:ConditionC}). In particular, the first case proves~\cite[Conjecture~3.4]{EW10}, the second case identifies the structure of the group $G_n(x_0x_kx_l)$ when $k=0$ or $l=0$ or $k=l$ given on~\cite[page~760]{EW10}, and the third case reproves~\cite[Lemma~2.5]{EW10}.

\begin{maincorollary}[{\cite[Lemma~2.5 and Conjecture~3.4]{EW10}}]\label{maincor:EWgroupswithC}
Suppose $(n,k,l)=1$ and condition~(C) holds and let $G=G_n(x_0x_kx_l)$. Then
\[
G\cong
\begin{cases}
B\left(\frac{2^n-(-1)^n}{3},3,2^{2n/3},1\right) & \mathrm{if~(A)~does~not~hold~and}~k\not \equiv 0,l\not \equiv 0,k\not \equiv l,\\
\Z_{2^n-(-1)^n} & \mathrm{if}~k\equiv 0,\mathrm{or}~l\equiv 0,\mathrm{or}~k\equiv l,\\
\Z_{({2^{n/3}-(-1)^{n/3}})/{3}}*\Z*\Z & \mathrm{if~(A)~holds}.
\end{cases}
\]
(where congruences are mod~$n$).
\end{maincorollary}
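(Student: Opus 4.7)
The plan is to recognize $G$ as a type-$\mathfrak{M}$ group via Example~\ref{ex:ConditionC} and apply Theorem~\ref{mainthm:Mgroups}, splitting according to the stated trichotomy. In each case the parameters are $r=2$, $s=-1$ (so $(r,s)=1$), and $(f,A)$ is chosen from $\{(k,\,l-2k),\,(l-k,\,k-2l),\,(n-l,\,k+l)\}$ so that the associated subcondition of~(C)---namely $3f\equiv 0\pmod n$---holds; then $\mu=|2^{n/(n,A)}-(-1)^{n/(n,A)}|$.

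In the middle case (one of $k,l,k-l$ is $\equiv 0\pmod n$), I would pick the form for which $f\equiv 0\pmod n$, so that up to cyclic permutation $w=x_0^2x_j$ with $j\in\{k,l,-k\}$ and $(n,j)=1$ (the latter follows from $(n,k,l)=1$ and the zero hypothesis). Relabelling $y_i=x_{ij}$ rewrites $G$ as $G_n(y_0^2y_1)$, and Lemma~\ref{lem:Pridescyclic} with $(\rho,\sigma)=(2,-1)$ yields $G\cong\Z_{2^n-(-1)^n}$.

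For the first case (condition~(A) fails and $k,l,k-l\not\equiv 0\pmod n$), after choosing the form dictated by~(C), I would establish $(n,f)=n/3$ (immediate from $n\mid 3f$ and $f\not\equiv 0$, which forces $3\mid n$) and $(n,A)=1$: any prime $p\mid(n,A)$ divides $n$, and using $(n,k,l)=1$ together with $p\mid 3f$ one finds $p=3$; but $3\mid A$ is equivalent in each form to $k+l\equiv 0\pmod 3$, which with $3\mid n$ is~(A), contradicting the hypothesis. Hence $(n,A,f)=1$ and Theorem~\ref{mainthm:Mgroups} produces a single copy of $\bar G=B\bigl((2^n-(-1)^n)/3,\,3,\,R,\,1\bigr)$ with no free factor, since $(n,A)-(n,A,f)=0$. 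Working out $(s\hat r)^{f\bar\alpha}\pmod\mu$ while using $2^n\equiv(-1)^n\pmod\mu$ gives an $R$ of order dividing $3$ in $(\Z/M)^*$; the two nontrivial such elements ($2^{n/3}$ and $2^{2n/3}$) are mutually inverse, so $B(M,3,R,1)\cong B(M,3,2^{2n/3},1)$ regardless of which appears.

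For the last case ((A) holds and $k,l,k-l\not\equiv 0\pmod n$), I would first show $3\parallel n$: if $9\mid n$, the relevant subcondition of~(C) combined with $k\not\equiv 0$ forces $3\mid k$, and~(A) then forces $3\mid l$, yielding $3\mid(n,k,l)=1$, a contradiction. Hence $(n,f)=n/3$, $(n,A)=3$ (since~(A) implies $3\mid A$ in each form, with other primes excluded as in Case~1), and $(n,A,f)=\gcd(3,n/3)=1$, so equation~(\ref{eq:cyclicGbar}) holds: $(n,A)(n,f)=3\cdot n/3=n=n(n,A,f)$. Theorem~\ref{mainthm:Mgroups} then delivers $G\cong\Z_M\ast\Z\ast\Z$ with $M=(n,f)\mu/n=(2^{n/3}-(-1)^{n/3})/3$ and free rank $(n,A)-(n,A,f)=2$, as required. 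The main technical hurdle is Case~1's reconciliation of the explicit formula for $R$ with the stated exponent $2^{2n/3}$, which depends on the order-$3$ subgroup observation together with the congruence $2^n\equiv(-1)^n\pmod\mu$.
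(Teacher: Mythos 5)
Your route genuinely differs from the paper's. For the first and third cases the paper first proves reduction results (Lemma~\ref{lem:CTrueAFalseGpsNew}, resp.\ a reduction cited from \cite{EW10}) that put $G$ into a single normal form such as $G_n(x_0x_{n/3}x_{1+2n/3})$ with parameters $(2,n,-1,n/3,1)$, and only then applies Theorem~\ref{mainthm:Mgroups}; you apply Theorem~\ref{mainthm:Mgroups} directly to whichever of the three forms of Example~\ref{ex:ConditionC} is available and argue the answer is independent of that choice. Your arithmetic establishing $(n,f)=n/3$ and $(n,A)=1$ in Case~1, and $3\mid n$, $9\nmid n$, $(n,A)=3$, $(n,A,f)=1$ in Case~3, is correct, and Cases~2 and~3 go through; Case~2 via Lemma~\ref{lem:Pridescyclic} is if anything cleaner than the paper's.

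Case~1, however, has a genuine gap at exactly the step you flag. First, knowing that $R=(s\hat{r})^{f\bar{\alpha}}$ has order dividing $3$ in $(\Z/M)^*$ does not confine it to $\{1,2^{n/3},2^{2n/3}\}$: the $3$-torsion of $(\Z/M)^*$ is typically larger (for $n=9$ one has $M=171=9\cdot 19$ and there are nine elements of order dividing $3$). What actually pins $R$ down is that $f\bar{\alpha}$ is $n/3$ times an integer prime to $3$, so $R$ is a power, with exponent prime to $3$, of $(s\hat{r})^{n/3}\equiv 2^{n(n-1)/3}\equiv 2^{2n/3}$; hence $R\equiv 2^{2n/3}$ or $(2^{2n/3})^{-1}$ mod $M$. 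Second, $2^{n/3}$ and $2^{2n/3}$ are mutually inverse mod $M$ only for even $n$, since their product is $2^n\equiv(-1)^n$; for odd $n$ the inverse is $(-1)^n2^{n/3}$. Third, even granted $R=(2^{2n/3})^{-1}$, the isomorphism $B(M,3,R,1)\cong B(M,3,R^{-1},1)$ is not free: substituting $b\mapsto b^{-1}$ yields $B(M,3,R^{-1},-1)$, and one must then invoke the Schur-multiplier normalization of \cite{Beyl73} (as the paper does elsewhere) to return $\lambda$ to $1$. The inverse case genuinely occurs --- for example $n=6$, $k=2$, $l=3$ gives $f\bar{A}\equiv 4\pmod 6$ and $R\equiv 4\equiv(2^{4})^{-1}\pmod{21}$ --- so these steps cannot be omitted. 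The paper's reduction lemma exists precisely to sidestep all of this: in its normal form $f\bar{\alpha}=n/3$ on the nose and $R=2^{2n/3}$ falls out directly.
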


The following immediate corollary to Theorem~\ref{mainthm:Mgroups} describes the metacyclic structures of the nontrivial finite groups occurring in the class $\mathfrak{M}$.

\begin{maincorollary}\label{maincor:MgroupsCor}
Let $G$ be the cyclically presented group in the class $\mathfrak{M}$ determined by the integer parameters $(r,n,s,f,A)$ where $f(r-s)\equiv 0$~mod~$n$. Suppose that $(r,s) = (n,A)=1$ and that $\mu = |r^{n}-s^{n}| \neq 0$. Then $G$ is the finite metacyclic group
\[G\cong B\left( \frac{(n,f)(r^{n}-s^{n})}{n} , \frac{n}{(n,f)} , (s\hat{r})^{f\bar{A}},1 \right)\]
of order $|r^n-s^n|$ where $r\hat{r}\equiv 1$~mod~$|r^{n}-s^{n}|$ and $A\bar{A}\equiv 1$~mod~$n$.
\end{maincorollary}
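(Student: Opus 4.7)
The plan is to apply Theorem~\ref{mainthm:Mgroups} directly under the additional hypothesis $(n,A)=1$ and to simplify the resulting expressions. Since Theorem~\ref{mainthm:Mgroups} already subsumes the standing hypotheses $f(r-s)\equiv 0$~mod~$n$ and $(r,s)=1$, and since $(n,A)=1$ implies $n/(n,A)=n$ (so that the condition $\mu=|r^{n/(n,A)}-s^{n/(n,A)}|\neq 0$ of the theorem is the same as $|r^n-s^n|\neq 0$), the hypotheses of the theorem are in force.

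First I would observe that the free product decomposition in Theorem~\ref{mainthm:Mgroups} collapses. Indeed, $(n,A)=1$ forces $(n,A,f)=1$, so the number of copies of $\bar{G}$ appearing in the free product is one; and $k=(n,A)-(n,A,f)=1-1=0$, so the free group $F_k$ is trivial. Hence $G\cong\bar{G}$.

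Next I would substitute $(n,A)=1$ into the four parameters of $\bar{G}=B(M,N,R,\lambda)$ given by Theorem~\ref{mainthm:Mgroups}. The first parameter becomes $M=(n,f)\mu/n=(n,f)(r^n-s^n)/n$ (up to sign, absorbed into the absolute value). The second becomes $N=n(n,A,f)/((n,A)(n,f))=n/(n,f)$. Since $\alpha=A/(n,A)=A$, the congruence $\alpha\bar\alpha\equiv 1$~mod~$n/(n,A)$ reduces to $A\bar A\equiv 1$~mod~$n$, so we may take $\bar\alpha=\bar A$; then the third parameter is $(s\hat r)^{f\bar\alpha/(n,A,f)}=(s\hat r)^{f\bar A}$. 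The fourth parameter is unchanged at $\lambda=1$. The congruence $r\hat r\equiv 1$~mod~$\mu$ likewise reduces to $r\hat r\equiv 1$~mod~$|r^n-s^n|$. This recovers the metacyclic presentation stated in the corollary verbatim. Finally, the order is $|G|=MN=\tfrac{(n,f)|r^n-s^n|}{n}\cdot\tfrac{n}{(n,f)}=|r^n-s^n|$.

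There is no real obstacle here: the content is purely a bookkeeping exercise, and the only conceptual point worth emphasising is that the hypothesis $(n,A)=1$ is precisely what forces both the free rank $k$ and the number of metacyclic factors $(n,A,f)$ to equal one, so that $G$ is a single finite metacyclic group rather than a free product.
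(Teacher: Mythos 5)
Your proposal is correct and matches the paper's treatment: the paper presents this result as an immediate corollary of Theorem~\ref{mainthm:Mgroups}, obtained exactly by noting that $(n,A)=1$ forces $(n,A,f)=1$ and $k=0$, so $G\cong\bar{G}$, and then substituting into the parameters of $\bar{G}$. The bookkeeping in your substitutions (including $\bar\alpha=\bar A$ and the order computation $MN=|r^n-s^n|$) is all accurate.
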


Using Corollary~\ref{maincor:MgroupsCor} we also recover various other families of finite metacyclic cyclically presented groups from the literature and often provide additional detail about the metacyclic structure. Applying it to the Prischepov groups~\cite{Prishchepov95}
\[P(r,n,l,s,f)=G_n( (x_0x_f\ldots x_{(r-1)f}) (x_{(l-1)}x_{(l-1)+f} \ldots x_{(l-1)+(s-1)f} )^{-1})\]
(with $r,s>0$) we have (by replacing $A$ with $l-1$):

\begin{corollary}\label{cor:MetaPrisch}
Suppose $(n,l-1)=1$, $(r,s)=1$, $r\neq s$, $f(r-s)\equiv 0$~mod~$n$. Then $P(r,n,l,s,f)$ is the metacyclic group $B((n,f)(r^n-s^n)/n,n/(n,f),(s\hat{r})^{f\bar{A}},1)$ of order $|r^n-s^n|$, where $(l-1)\bar{A}\equiv 1$~mod~$n$ and $r\hat{r}\equiv 1$~mod~$(r^n-s^n)$.
\end{corollary}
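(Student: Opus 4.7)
The plan is to recognize Corollary~\ref{cor:MetaPrisch} as essentially a direct specialization of Corollary~\ref{maincor:MgroupsCor}, so the work is to verify that the Prishchepov family $P(r,n,l,s,f)$ falls inside the class $\mathfrak{M}$ with the expected parameter identification and then to check that all the hypotheses of Corollary~\ref{maincor:MgroupsCor} are met.

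First I would match the defining word of $P(r,n,l,s,f)$ against the canonical form~(\ref{eqn:OurCyclicw}). Since $r,s>0$ and the relator is
\[
(x_0 x_f \cdots x_{(r-1)f})(x_{l-1}x_{(l-1)+f}\cdots x_{(l-1)+(s-1)f})^{-1} = \Lambda(r,f)\,\theta^{l-1}(\Lambda(s,f))^{-1},
\]
this is exactly the $s\ge 0$ case of~(\ref{eqn:OurCyclicw}) with shift offset $A=l-1$. Hence, under the standing hypothesis $f(r-s)\equiv 0 \pmod n$, the group $P(r,n,l,s,f)$ is the cyclically presented group in class $\mathfrak{M}$ determined by the quintuple $(r,n,s,f,A)$ with $A=l-1$.

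Next I would verify the hypotheses of Corollary~\ref{maincor:MgroupsCor}. The assumption $(n,l-1)=1$ translates directly to $(n,A)=1$; $(r,s)=1$ is given; and because $r,s$ are positive integers with $r\neq s$, we have $r^n\neq s^n$, so $\mu=|r^n-s^n|\neq 0$. With $(n,A)=1$ we get $n/(n,A)=n$, so $\mu = |r^n-s^n|$ as stated. The modular inverse $\bar A$ of $A=l-1$ modulo $n/(n,A)=n$ is precisely the $\bar A$ defined in the corollary.

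Finally I would substitute into the conclusion of Corollary~\ref{maincor:MgroupsCor}. With $(n,A)=1$, we have $(n,A,f)=1$ and $k=(n,A)-(n,A,f)=0$, so the free part disappears and the free product collapses to a single copy of $\bar G$. The remaining parameters read off as $M=(n,f)(r^n-s^n)/n$, $N=n/(n,f)$, $R=(s\hat r)^{f\bar A}$ with $r\hat r\equiv 1\pmod{r^n-s^n}$, and $\lambda=1$. This yields exactly the metacyclic group $B\!\left((n,f)(r^n-s^n)/n,\ n/(n,f),\ (s\hat r)^{f\bar A},\ 1\right)$ of order $|r^n-s^n|$ claimed in Corollary~\ref{cor:MetaPrisch}. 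No step is a genuine obstacle; the only thing that needs care is the bookkeeping to confirm that the Prishchepov relator is presented in the canonical form with $A=l-1$ rather than with some cyclic shift that would alter $A$.
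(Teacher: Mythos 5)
Your proposal is correct and matches the paper's own treatment exactly: the paper obtains Corollary~\ref{cor:MetaPrisch} as an immediate specialization of Corollary~\ref{maincor:MgroupsCor} by writing the Prishchepov relator in the canonical form~(\ref{eqn:OurCyclicw}) with $A=l-1$, just as you do. Your verification of the hypotheses ($(n,A)=1$, $(r,s)=1$, and $r^n\neq s^n$ since $r,s>0$ are distinct) is the only content needed, and it is right.
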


In the case $s=1$ we have that the groups $P(r,n,l,s,k)$ coincide with the groups $R(r,n,k,h)$ of~\cite{CR75CMB}; specifically, $R(r,n,k,h)=P(r,n,(r-1)h+k+1,1,h)$.
Setting $s=1$, $f=h$, $l=(r-1)h+k+1$ in Corollary~\ref{cor:MetaPrisch} and noting that $r\cdot r^{n-1}\equiv 1$~mod~$(r^n-1)$ we get

\begin{corollary}\label{cor:MetaCanadian}
Suppose $(n,k)=1$, $r>1$, $h(r-1)\equiv 0$~mod~$n$. Then $R(r,n,k,h)$ is the metacyclic group $B((n,h)(r^n-1)/n,n/(n,h),r^{(n-1)h\bar{A}},1)$ of order $(r^n-1)$, where $((r-1)h+k)\bar{A}\equiv 1$~mod~$n$.
\end{corollary}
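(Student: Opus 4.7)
The plan is to deduce this corollary directly from Corollary \ref{cor:MetaPrisch} by specializing the Prishchepov parameters under the identification $R(r,n,k,h) = P(r,n,(r-1)h+k+1,1,h)$ that is recorded just above the statement. Accordingly I will set $s=1$, $f=h$, and $l = (r-1)h+k+1$ in Corollary \ref{cor:MetaPrisch}.

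First I will verify that the hypotheses of Corollary \ref{cor:MetaPrisch} hold under the stated conditions. The coprimality $(r,s)=1$ is automatic from $s=1$, the inequality $r\neq s$ follows from $r>1$, and the congruence $f(r-s)\equiv 0$~mod~$n$ becomes the given $h(r-1)\equiv 0$~mod~$n$. For the remaining assumption $(n,l-1)=1$, I will use $l-1=(r-1)h+k$ together with $h(r-1)\equiv 0$~mod~$n$ to see that $l-1\equiv k$~mod~$n$, so $(n,l-1)=(n,k)=1$.

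Next I will read off the conclusion of Corollary \ref{cor:MetaPrisch} after substitution: $R(r,n,k,h)$ equals $B((n,h)(r^n-1)/n,\, n/(n,h),\, \hat{r}^{h\bar{A}},\, 1)$ and has order $|r^n-1|=r^n-1$ (using $r>1$), where $((r-1)h+k)\bar{A}\equiv 1$~mod~$n$ and $r\hat{r}\equiv 1$~mod~$(r^n-1)$. The only point that is not a literal substitution is the form of the middle exponent. For this I will observe that $r\cdot r^{n-1}=r^n\equiv 1$~mod~$(r^n-1)$, so $\hat{r}\equiv r^{n-1}$~mod~$(r^n-1)$; since the modulus $(n,h)(r^n-1)/n$ relevant to the $B$-presentation divides $r^n-1$, this congruence descends and gives $\hat{r}^{h\bar{A}}\equiv r^{(n-1)h\bar{A}}$ in that modulus, matching the form stated in Corollary \ref{cor:MetaCanadian}.

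There is no real obstacle here: the proof is entirely a verification of hypotheses plus the elementary observation that the multiplicative inverse of $r$ modulo $r^n-1$ is $r^{n-1}$. The slightly delicate point worth flagging when writing it up is that the rewriting $\hat{r}\mapsto r^{n-1}$ must be carried out modulo the correct quotient, which is legitimate because $(n,h)(r^n-1)/n$ divides $r^n-1$.
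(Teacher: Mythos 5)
Your proposal is correct and matches the paper's own derivation exactly: the paper obtains this corollary by setting $s=1$, $f=h$, $l=(r-1)h+k+1$ in Corollary~\ref{cor:MetaPrisch} and noting that $r\cdot r^{n-1}\equiv 1$ mod $(r^n-1)$, so $\hat{r}$ may be taken to be $r^{n-1}$. Your additional checks (that $(n,l-1)=(n,k)$ via $h(r-1)\equiv 0$ mod $n$, and that the exponent rewriting is valid modulo $(n,h)(r^n-1)/n$) are correct and simply make explicit what the paper leaves implicit.
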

\noindent Under these hypotheses the groups $R(r,n,k,h)$ were shown to be metacyclic of order $(r^n-1)$ in the Corollary in~\cite{CR75CMB}. If additionally we have $h=1$ then we get the groups $F(r,n,k)$ of~\cite{CR75EMS}; specifically, $F(r,n,k)=P(r,n,r+k,1,1)$. Setting $h=1$ in Corollary~\ref{cor:MetaCanadian} we get

\begin{corollary}\label{cor:MetaFrnk}
Suppose $(n,k)=1$, $r>1$, $r\equiv 1$~mod~$n$. Then $F(r,n,k)$ is the metacyclic group $B((r^n-1)/n,n,r^{(n-1)\bar{A}},1)$ of order $(r^n-1)$, where $(r+k-1)\bar{A}\equiv 1$~mod~$n$.
\end{corollary}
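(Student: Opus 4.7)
The plan is to derive this statement as an immediate specialization of Corollary~\ref{cor:MetaCanadian} by setting $h=1$, using the identification $F(r,n,k) = P(r,n,r+k,1,1)$ that is recorded in the paragraph just preceding the statement. In that chain of specializations, Corollary~\ref{cor:MetaPrisch} is obtained from Corollary~\ref{maincor:MgroupsCor} via the Prishchepov parameters, Corollary~\ref{cor:MetaCanadian} follows from Corollary~\ref{cor:MetaPrisch} by specializing to $s=1$, $f=h$, $l=(r-1)h+k+1$, and our target statement in turn corresponds to the further specialization $h=1$, which forces $f=1$ and $l=r+k$.

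First I would check that the hypotheses of Corollary~\ref{cor:MetaCanadian} are met under our assumptions. The conditions $(n,k)=1$ and $r>1$ are inherited verbatim, and the congruence $h(r-1)\equiv 0$~mod~$n$ with $h=1$ reduces exactly to our standing hypothesis $r\equiv 1$~mod~$n$. In particular this ensures that $f(r-s)=1\cdot(r-1)\equiv 0$~mod~$n$ and so we really are inside the class~$\mathfrak{M}$, and moreover that $(r,s)=(r,1)=1$ and $\mu = |r^n-s^n|=r^n-1\neq 0$, so the hypotheses of Corollary~\ref{maincor:MgroupsCor} ultimately propagate correctly.

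Next I would substitute $h=1$ into the metacyclic data produced by Corollary~\ref{cor:MetaCanadian}. Since $(n,h)=(n,1)=1$, the cyclic normal subgroup has order $(n,h)(r^n-1)/n=(r^n-1)/n$ and the cyclic quotient has order $n/(n,h)=n$. The conjugation exponent collapses from $r^{(n-1)h\bar{A}}$ to $r^{(n-1)\bar{A}}$, and the defining congruence $((r-1)h+k)\bar{A}\equiv 1$~mod~$n$ simplifies to $(r-1+k)\bar{A}\equiv 1$~mod~$n$, i.e.\ $(r+k-1)\bar{A}\equiv 1$~mod~$n$, which is exactly the condition on $\bar{A}$ appearing in the statement. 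The total order is $|r^n-s^n|=r^n-1$, as asserted. Since the argument is a purely mechanical specialization of an already-proven result, there is essentially no obstacle; the only thing to monitor is the bookkeeping on the parameter substitutions, and in particular the fact that $(n,h)=1$ when $h=1$, which is what causes the more elaborate indices $(n,h)$ and $n/(n,h)$ appearing in Corollary~\ref{cor:MetaCanadian} to disappear in the present statement.
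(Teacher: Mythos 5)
Your proposal is correct and follows exactly the paper's route: the paper likewise obtains this corollary by setting $h=1$ in Corollary~\ref{cor:MetaCanadian} via the identification $F(r,n,k)=P(r,n,r+k,1,1)$, and your bookkeeping of the resulting simplifications (in particular $(n,h)=1$ collapsing the indices) matches the intended derivation.
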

\noindent Under these hypotheses the groups $F(r,n,k)$ were shown to be metacyclic of order $(r^n-1)$ in Theorem~1 of~\cite{CR75EMS} and the presentation $B((r^n-1)/n,n,r^{\bar{k}},1)$, where $k\bar{k}\equiv 1$~mod~$n$, was obtained.

In the case $f=1, l=r+1$ the groups $P(r,n,l,s,f)$ are the groups $H(r,n,s)$ of~\cite{CR75LMS}. Setting $f=1$, $l=r+1$ in Corollary~\ref{cor:MetaPrisch} we get
\begin{corollary}\label{cor:MetaHrns}
Suppose $(n,r)=1$, $(r,s)=1$, $r\neq s$, $r\equiv s$~mod~$n$. Then $H(r,n,s)$ is the metacyclic group $B((r^n-s^n)/n,n,(s\hat{r})^{\bar{A}},1)$ of order $|r^n-s^n|$, where $r\bar{A}\equiv 1$~mod~$n$ and $r\hat{r}\equiv 1$~mod~$(r^n-s^n)$.
\end{corollary}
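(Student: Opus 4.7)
The strategy is to obtain Corollary~\ref{cor:MetaHrns} as the direct specialization of Corollary~\ref{cor:MetaPrisch} at $f=1$ and $l=r+1$, as indicated in the paragraph preceding the statement. First I would record the identification $H(r,n,s) = P(r,n,r+1,s,1)$. Next I would check that the hypotheses of Corollary~\ref{cor:MetaPrisch} are satisfied under the assumptions given here: the requirement $(n,l-1)=1$ becomes $(n,r)=1$; the conditions $(r,s)=1$ and $r\neq s$ are unchanged; and $f(r-s)\equiv 0\bmod n$ becomes $r-s\equiv 0 \bmod n$, which is exactly $r\equiv s\bmod n$. All four hypotheses therefore follow immediately from the hypotheses of Corollary~\ref{cor:MetaHrns}.

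With the hypotheses verified, I would substitute $f=1$ and $A=l-1=r$ into the conclusion of Corollary~\ref{cor:MetaPrisch}. Since $(n,f)=(n,1)=1$, the three structural parameters of the $B$-presentation simplify cleanly: the first parameter $(n,f)(r^n-s^n)/n$ reduces to $(r^n-s^n)/n$; the second parameter $n/(n,f)$ reduces to $n$; and the exponent $f\bar{A}$ reduces to $\bar{A}$, where $\bar{A}$ is determined by $(l-1)\bar{A}=r\bar{A}\equiv 1\bmod n$. The element $\hat{r}$ is inherited directly from the statement of Corollary~\ref{cor:MetaPrisch} as the inverse of $r$ modulo $r^n-s^n$. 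Collecting these simplifications yields the group $B\bigl((r^n-s^n)/n,\, n,\, (s\hat{r})^{\bar{A}},\, 1\bigr)$ of order $|r^n-s^n|$, matching the stated conclusion verbatim.

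No genuine obstacle arises in this argument: every step is either a direct substitution of parameter values or an arithmetic simplification using $(n,1)=1$. The only care needed is to track the replacement of $A$ by $l-1$, which is already spelled out in the statement of Corollary~\ref{cor:MetaPrisch}. The real work was done upstream, in Theorem~\ref{mainthm:Mgroups} and Corollary~\ref{maincor:MgroupsCor}, which supply the metacyclic structure for the generic finite groups in the class $\mathfrak{M}$; the present corollary is essentially a dictionary entry converting those general formulas into the classical parameters of the Campbell--Robertson family $H(r,n,s)$.
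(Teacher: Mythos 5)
Your proposal is correct and follows exactly the paper's route: the paper derives Corollary~\ref{cor:MetaHrns} by setting $f=1$ and $l=r+1$ in Corollary~\ref{cor:MetaPrisch}, so that $A=l-1=r$, $(n,f)=1$, and the hypotheses and parameters simplify precisely as you describe. Your write-up merely makes explicit the substitutions the paper leaves to the reader.
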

\noindent Under these hypotheses Theorem~3(ii) of~\cite{CR75LMS} showed the groups $H(r,n,s)$ to be metacyclic of order $|r^n-s^n|$, being an extension of a cyclic group of order $|r^n-s^n|/n$ by a cyclic group of order~$n$.

In the case $f=1$ the groups $P(r,n,l,s,f)$ coincide with the groups $F(r,n,k,s)$ of~\cite{CR79}; specifically, $F(r,n,k,s)=P(r,n,r+k,s,1)$. Setting $f=1$, $l=r+k$ in Corollary~\ref{cor:MetaPrisch} we get

\begin{corollary}\label{cor:MetaFrnks}
Suppose $(n,r+k-1)=1$, $(r,s)=1$, $r\neq s$, $r\equiv s$~mod~$n$. Then $F(r,n,k,s)$ is the metacyclic group $B((r^n-s^n)/n,n,(s\hat{r})^{\bar{A}},1)$ of order $|r^n-s^n|$, where $(r+k-1)\bar{A}\equiv 1$~mod~$n$ and $r\hat{r}\equiv 1$~mod~$(r^n-s^n)$.
\end{corollary}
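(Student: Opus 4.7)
The plan is to obtain the claim as an immediate specialization of Corollary~\ref{cor:MetaPrisch}. The paragraph immediately preceding the statement records the identification $F(r,n,k,s) = P(r,n,r+k,s,1)$, so these are Prishchepov groups with step size $f=1$ and base index $l = r+k$. The task therefore reduces to verifying that the hypotheses of Corollary~\ref{cor:MetaPrisch} hold under the present assumptions and then substituting $f=1$ and $l = r+k$ into its conclusion.

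First I would check the hypotheses. The coprimality $(n,l-1) = (n,r+k-1) = 1$ is assumed directly, as are $(r,s)=1$ and $r\neq s$. The congruence $f(r-s)\equiv 0$~mod~$n$ reduces, when $f=1$, to $r\equiv s$~mod~$n$, which is the remaining hypothesis in the statement. Hence all four hypotheses required by Corollary~\ref{cor:MetaPrisch} are in force.

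Next I would substitute into the Beyl--Tappe presentation $B((n,f)(r^n-s^n)/n,\, n/(n,f),\, (s\hat{r})^{f\bar{A}},\,1)$ supplied by Corollary~\ref{cor:MetaPrisch}. Since $(n,1)=1$, the three parameters collapse to $(r^n-s^n)/n$, $n$, and $(s\hat{r})^{\bar{A}}$ respectively; the auxiliary congruence $(l-1)\bar{A}\equiv 1$~mod~$n$ becomes $(r+k-1)\bar{A}\equiv 1$~mod~$n$, while $r\hat{r}\equiv 1$~mod~$(r^n-s^n)$ is unchanged. This produces the asserted metacyclic structure and order $|r^n-s^n|$. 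Since the argument is a direct specialization, no genuine obstacle arises; the only point requiring care is to use the correct substitution $l=r+k$ (appropriate for step size one), which is distinct from the substitution $l=(r-1)h+k+1$ used in Corollary~\ref{cor:MetaCanadian} for the groups $R(r,n,k,h)$, so that the auxiliary congruence specializing $(l-1)\bar{A}\equiv 1$~mod~$n$ is recorded correctly.
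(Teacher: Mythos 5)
Your proposal is correct and matches the paper's own derivation exactly: the paper likewise records $F(r,n,k,s)=P(r,n,r+k,s,1)$ and obtains the corollary by setting $f=1$, $l=r+k$ in Corollary~\ref{cor:MetaPrisch}, with the hypotheses and parameters specializing precisely as you describe.
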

\noindent Under these hypotheses Theorem~2 of~\cite{CR79} showed the groups $F(r,n,k,s)$ to be metacyclic of order $|r^n-s^n|$ and the presentation $B((r^n-s^n)/n,n,\hat{s}r,1)$ for $F(r,n,k,s)$ was obtained, where $s\hat{s}\equiv 1$~mod~$(r^n-s^n)$.

A celebrated theorem of J.Thompson~\cite{Thompson59} holds that a finite group with a fixed point free automorphism of prime order must be nilpotent. Restricting attention to the shift automorphism operating on a finite cyclically presented group in the class $\mathfrak{M}$, the next result asserts, in particular, that if any power of the shift is fixed point free then $G$ is cyclic.

\begin{maintheorem}\label{thm:fpf}
Let $G$ be a finite and nontrivial group in class $\mathfrak{M}$ determined by the integer parameters $(r,n,s,f,A)$ where $f(r-s)\equiv 0$~mod~$n$, and let $0\leq j <n$. Then the fixed point subgroup $\mathrm{Fix}(\theta^j)$ has order $|r^{(n,j)}-s^{(n,j)}|$. In particular, the automorphism $\theta^j$ is fixed point free if and only if $(n,j)=|r-s|=1$, in which case $f\equiv 0$~mod~$n$ and $G$ is cyclic of order $|r^n-s^n|$.
\end{maintheorem}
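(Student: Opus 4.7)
The plan is to reduce to a centralizer calculation in the shift extension $E$ and then to a single congruence in a cyclic normal subgroup. Because $G$ is finite and nontrivial, Theorem~\ref{thm:Esubgroups}(c) places us in case~(i): $(n,A)=(r,s)=1$ and $\mu:=r^n-s^n\neq 0$, with $|E|=n|\mu|$ and $|G|=|\mu|$. Writing each element of $E$ uniquely as $gt^k$ with $g\in G$ and $0\le k<n$, one checks $gt^k\in C_E(t^j)$ iff $g\in\mathrm{Fix}(\theta^j)$; hence $C_E(t^j)=\mathrm{Fix}(\theta^j)\cdot\langle t\rangle$ with trivial intersection, and
\[|\mathrm{Fix}(\theta^j)|=|C_E(t^j)|/n.\]
This count depends only on $E$ and $j$, not on the admissible choice of $f$, so it suffices to compute $|C_E(t^j)|$.

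Next, I extract the metacyclic structure of $E$ using the specific subgroup $G_0:=\ker\nu^0$. For $f=0$ the defining word is $w=x_0^rx_A^{-s}$, and since $(n,A)=1$ the re-indexing $z_k:=x_{kA\bmod n}$ is a permutation of the generators that transforms the relators of $G_0$ into $z_k^rz_{k+1}^{-s}$. By Lemma~\ref{lem:Pridescyclic}, $G_0\cong\Z_{|\mu|}$, generated by $y:=x_0$. Since $G_0$ is normal of index $n$ in $E$ with complement $\langle t\rangle$, we have $E=G_0\rtimes\langle t\rangle$, and the conjugation action is determined by $tyt^{-1}=y^\kappa$ for some $\kappa\in(\Z/|\mu|)^*$. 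The hypothesis $(r,s)=1$ gives $(s,|\mu|)=(s,r^n-s^n)=(s,r^n)=1$, so $s$ has an inverse $\hat s$ modulo $|\mu|$. The relation $y^r=t^Ay^st^{-A}=(t^Ayt^{-A})^s=y^{s\kappa^A}$ forces $\kappa^A\equiv r\hat s$~mod~$|\mu|$; combined with $\kappa^n\equiv 1$ and $(n,A)=1$, taking $\bar A$ with $A\bar A\equiv 1$~mod~$n$ yields $\kappa\equiv(r\hat s)^{\bar A}$~mod~$|\mu|$.

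An element $y^at^k$ commutes with $t^j$ iff $a(\kappa^j-1)\equiv 0$~mod~$|\mu|$, so $|C_E(t^j)|=n\cdot(|\mu|,\kappa^j-1)$, and therefore $|\mathrm{Fix}(\theta^j)|=(|\mu|,\kappa^j-1)$. Because $\hat s$ is a unit, $(|\mu|,\kappa^j-1)=(|\mu|,r^{\bar Aj}-s^{\bar Aj})$; the standard identity $(r^n-s^n,r^m-s^m)=|r^{(n,m)}-s^{(n,m)}|$ (valid for $(r,s)=1$, via Euclidean-style induction), together with $(n,\bar A)=1$ giving $(n,\bar Aj)=(n,j)$, now collapses this to $|r^{(n,j)}-s^{(n,j)}|$, establishing the main formula.

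For the concluding sentence, $\theta^j$ is fixed point free iff $|r^d-s^d|=1$ with $d=(n,j)$. The nontriviality of $G$ rules out case~(ii) of Theorem~\ref{thm:Esubgroups}(c), forcing $rs\neq 0$; together with $(r,s)=1$ and $r^n\neq s^n$, a short case check shows $|r^d-s^d|\ge 2$ whenever $d\ge 2$, so necessarily $d=|r-s|=1$. Then $f(r-s)\equiv 0$~mod~$n$ forces $f\equiv 0$~mod~$n$, so $G=\ker\nu^f=\ker\nu^0=G_0\cong\Z_{|\mu|}$ as above. The main obstacle is the cyclic identification of $G_0$ and the correct determination of $\kappa$, both of which the $z_k$-relabeling and Lemma~\ref{lem:Pridescyclic} handle cleanly; the gcd identity is a standard number-theoretic fact.
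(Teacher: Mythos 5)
Your proof is correct, and it follows the same overall strategy as the paper --- reduce the fixed-point count to the cyclic normal subgroup $\Gamma=\ker\nu^0\cong\Z_{|r^n-s^n|}$, realize the shift there as a power map, and finish with the gcd identity $(r^n-s^n,r^j-s^j)=|r^{(n,j)}-s^{(n,j)}|$ --- but both intermediate steps are implemented differently. For the transfer from $G=\ker\nu^f$ to $\Gamma$, the paper invokes the $\Z_n$-set isomorphism $G\simeq E/\gpres{t}\simeq\Gamma$ from \cite[Lemma~2.2]{Bogley14}, together with the reindexing isomorphism $\phi$ intertwining $\theta_{\Gamma_0}$ with $\theta_\Gamma^A$; your observation that $|\mathrm{Fix}(\theta^j)|=|C_E(t^j)|/n$ for any kernel of a retraction complementing $\gpres{t}$ is an elementary, self-contained substitute that buys the same $f$-independence. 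For the multiplier, the paper quotes $\theta(x_i)=x_i^{\beta}$ with $\beta=r(s^{n-1}a+b)$ from the proof of Lemma~\ref{lem:Pridescyclic} in \cite{BW15}, whereas you extract $\kappa\equiv(r\hat s)^{\bar A}$ directly from the relator $y^rt^Ay^{-s}t^{-A}$ together with $\kappa^n\equiv 1$; the two agree modulo $|\mu|$ (both being powers of $r\hat s$), and your derivation avoids the external citation. Two minor points to tighten: the inference ``nontriviality rules out case (ii), forcing $rs\neq 0$'' is really the observation that in case (i) of Theorem~\ref{thm:Esubgroups}(c) the conditions $(r,s)=1$ and $rs=0$ would give $|r^n-s^n|=1$ and hence $G=1$; and the ``short case check'' that $|r^d-s^d|\geq 2$ for $d\geq 2$ should treat the opposite-sign case separately (odd $d$ gives $|r|^d+|s|^d\geq 2$; even $d$ reduces to $\bigl||r|^d-|s|^d\bigr|$ and the Mean Value Theorem argument from the paper's proof of Theorem~\ref{thm:Esubgroups}(c)). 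Neither is a gap.
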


The following notation will be used throughout this article:
\begin{equation}\label{eqn:parameters}
\left.
\begin{aligned}
    r&,n,s,f,A\in \Z, n>0,r\geq 0~\mathrm{where}~f(r-s)\equiv 0~\mathrm{mod}~n,\quad\\
    m&=n/(n,A),\ \alpha=A/(n,A),\\
    g&=(r,s),\ \rho =r/g,\ \sigma =s/g,~(\mathrm{and}~g=\rho=\sigma=0~\mathrm{if}~r=s=0),\\
    \mu&=|\rho^m-\sigma^m|,\\
    E&=E(r,n,s,A)=\pres{y,t}{t^n,y^rt^Ay^{-s}t^{-A}},\quad\\
    M&=M(r,n,s,A)=\pres{u,v}{v^m,u^\rho v^\alpha u^{-\sigma }v^{-\alpha}},\quad\\
    \nu^f&:E\rightarrow \pres{t}{t^n=1}~\mathrm{is~the~retraction~given~by}~\nu^f(y)=t^f, \nu^f(t)=t;\quad\\
    \mathfrak{M}&~\mathrm{is~the~class~of~groups}~G~\mathrm{where}\\
    G&= \mathrm{ker}\nu^f=G_n(w)~\mathrm{where}\\
    w&= \begin{cases}
		\prod_{i=0}^{r-1} x_{if}\left( \prod_{i=0}^{s-1}x_{A+if}\right)^{-1} & \mathrm{if}~s\geq 0,\\
		\prod_{i=0}^{r-1} x_{if}\prod_{i=0}^{|s|-1} x_{A+(r+i)f} & \mathrm{if}~s\leq 0.
\end{cases}\\
    \Gamma&=\mathrm{ker}\nu^0=\npres{y}_E=G_n(x_0^rx_A^{-s}),
\end{aligned}
\right\}
\end{equation}
where $\npres{y}_E$ denotes the normal closure of $y$ in $E$.

We conclude this introduction with an outline of the paper. In Section~\ref{sec:EProof} we prove Theorem~\ref{thm:Esubgroups} (and hence Corollary~\ref{cor:Gshort}) by expressing $E$ as an amalgamated free product of cyclic and metacyclic groups and carrying out a detailed study of this decomposition. In Section~\ref{sec:MStructure} we prove refinements of parts of Theorem~\ref{thm:Esubgroups} and Corollary~\ref{cor:Gshort} that apply to infinite groups $E$ and their cyclically presented subgroups $G$ and we prove the Tits alternative. In Section~\ref{sec:freeprodmeta} we study the graph of groups decomposition of $G$ and study the intersection of finite cyclically presented groups $\bar{G}$ and $\bar{\Gamma}$ lying within $G$ and $\Gamma$ to obtain a metacyclic presentation of $\bar{G}$ and prove Theorem~\ref{mainthm:Mgroups}. We then apply this to the groups $G_n(x_0x_kx_l)$ to prove Corollary~\ref{maincor:EWgroupswithC}. In Section~\ref{sec:FixedPoints}, by studying the shift dynamics of $\Gamma$, we obtain the necessary information about the shift dynamics of $G$ to prove Theorem~\ref{thm:fpf}. We close by comparing the finite groups $E$ and $G$ obtained in this article with the finite groups $J_n(m,k)$ and their cyclically presented subgroups considered in~\cite{BW15}.

\section{The Structure of the Group $E$}\label{sec:EProof}

Our objective in this section is to prove Theorem~\ref{thm:Esubgroups}. Consider the group $E_n(w)$ in~(\ref{eqn:ExtPres}), and write
\[W=W(x,t)=x^{r_1}t^{\beta_1}\ldots x^{r_k}t^{\beta_k}\]
where $k\geq 1$ and each $r_i\in \Z\backslash \{0\}$, $1\leq \beta_i\leq n-1$. Set $h=(r_1,\ldots,r_k)$, $b=(n,\beta_1,\ldots ,\beta_k)$ and form the group
$$
N = \pres{u,v}{v^{n/b},W'(u,v)}
$$
where
\[W'(u,v)=u^{r_1/h}v^{\beta_1/b}\ldots u^{r_k/h}v^{\beta_k/b}.\]
Since $t$ has order $n$ in $E_n(w)$, it follows that with the identifications $v = t^b$ and $u = x^h$, the group $E_n(w)$ splits as an amalgamated free product
\begin{alignat}{1}
E_n(w) &=\pres{x,t}{t^n,W(x,t)}\nonumber\\
&\cong \pres{t}{t^n} \ast_{t^b = v} \pres{u,v}{v^{n/b}, W'(u,v)} \ast_{u = x^h} \gpres{x} \nonumber\\
&\cong \Z_n \ast_{\Z_{n/b}} N\ast_{u=x^h} \gpres{x}\label{eqn:longEsplit}
\end{alignat}
where the cyclic groups $\gpres{u}$ and $\gpres{v}$ can be finite or infinite.

In this notation we have
\begin{lemma}\label{lem:treeofGpsE}
\begin{itemize}
  \item[(a)] Let $P$ be any property of groups that is preserved by taking subgroups and is satisfied by all finite cyclic groups. If $N$ satisfies $P$ then every finite subgroup of $E_n(w)$ satisfies $P$.
  \item[(b)] If $N$ is finite then $E_n(w)$ is virtually free.
\end{itemize}
\end{lemma}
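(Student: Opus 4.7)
The plan is to treat $E_n(w)$ as the fundamental group of the finite graph of groups encoded in~(\ref{eqn:longEsplit}): a linear tree with vertex groups $\Z_n$, $N$, $\gpres{x}$ and edge groups $\Z_{n/b}$ and $\gpres{u}$. Both parts are then immediate consequences of standard Bass--Serre theoretic facts applied to this decomposition; there is no substantive obstacle, only a bit of bookkeeping regarding which edge and vertex groups are finite in each case.

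For part (a), I would invoke the torsion theorem for graphs of groups (see Serre, \emph{Trees}, \S I.4, Theorem~8, or equivalently the fact that a group acting without inversions on a tree has its finite subgroups fixing vertices): every finite subgroup of the fundamental group of a graph of groups is conjugate into some vertex group. Thus any finite subgroup $H\leq E_n(w)$ is conjugate into $\Z_n$, $N$, or $\gpres{x}$. A subgroup of $\Z_n$ is finite cyclic and satisfies $P$ by the hypothesis that $P$ holds for all finite cyclic groups; a finite subgroup of the cyclic group $\gpres{x}$ is itself finite cyclic and so again satisfies $P$; a subgroup of $N$ inherits $P$ from $N$ since $P$ passes to subgroups. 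This exhausts all possibilities.

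For part (b), assume $N$ is finite. Then all edge groups in~(\ref{eqn:longEsplit}) are finite (namely $\Z_{n/b}$ and $\gpres{u}$, the latter being a subgroup of the finite group $N$), and all vertex groups are virtually free ($\Z_n$ and $N$ are finite, while $\gpres{x}$ is cyclic). I would then apply the classical result of Karrass--Pietrowski--Solitar and Bass that the fundamental group of a finite graph of virtually free groups with finite edge groups is virtually free. Equivalently, one can first recognise $K := \Z_n \ast_{\Z_{n/b}} N$ as an amalgamated free product of two finite groups over a finite subgroup, hence virtually free, and then recognise $E_n(w) \cong K \ast_{\gpres{u}} \gpres{x}$ as an amalgamated free product of two virtually free groups over a finite subgroup, which is virtually free by the same closure result.
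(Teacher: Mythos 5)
Your proof is correct and follows essentially the same route as the paper: both parts are read off from the splitting~(\ref{eqn:longEsplit}), using the conjugacy of finite subgroups into vertex groups for (a) and the theory of finite graphs of groups for (b). The only difference is in (b): the paper observes that when $N$ is finite the element $u = x^h$ has finite order, so the vertex group $\gpres{x}$ is itself finite and~(\ref{eqn:longEsplit}) becomes a finite graph of finite groups, to which Stallings' theorem applies directly; you instead leave $\gpres{x}$ as possibly infinite cyclic and invoke the (correct, but slightly heavier) closure of virtual freeness under amalgamation of virtually free groups over finite edge groups.
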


\begin{proof}
(a) Using the free product decomposition~(\ref{eqn:longEsplit}), \cite[I.7.11]{DD89} implies that each finite subgroup $H$ of $E$ is conjugate to a subgroup of either $\gpres{t},N,$ or $\gpres{x}$. Since the property $P$ is preserved by taking subgroups and all finite cyclic groups satisfy $P$, the result follows.

(b) If $N$ is finite then $\gpres{x}$ is finite so~(\ref{eqn:longEsplit})  gives that $E_n(w)$ is the fundamental group of a (finite) graph of finite groups so $E_n(w)$ is virtually free by the (easier part) of a theorem of Stallings~\cite[IV.1.6]{DD89}.
\end{proof}

We now concentrate on the case where $E = \pres{t,y}{t^n, y^rt^Ay^{-s}t^{-A}}$ as in~(\ref{eqn:parameters}) and form the group
$$
M = \pres{u,v}{v^{n/(n,A)},u^{\rho}v^\alpha u^{-\sigma}v^{-\alpha}}.
$$
where $\alpha=A/(n,A)$. We show in Lemma \ref{lemma:M} below that the group $M$ is split metacyclic of the form $M = \gpres{u} \rtimes \gpres{v}$ where $v$ has order $n/(n,A)$ and the order of $u$ can be finite or infinite. It follows that with the identifications $v = t^{(n,A)}$ and $u = y^g$, the group $E$ splits as an amalgamated free product
\begin{alignat}{1}
E &\cong \pres{t}{t^n} \ast_{t^{(n,A)} = v} \pres{u,v}{v^{n/(n,A)}, u^{\rho}v^{\alpha}u^{-\sigma}v^{-\alpha}} \ast_{u = y^g} \gpres{y}\nonumber\\
 &\cong \Z_n \ast_{\Z_{n/(n,A)}} M \ast_{u=y^g} \gpres{y}\label{eqn:Esplit}
\end{alignat}
where the cyclic groups $\gpres{u}$ and $\gpres{y}$ can be finite or infinite.

In the statement of the following lemma the parameters $m,\rho,\sigma,\alpha$ are any integers (with $m\geq 1$); in our applications these parameters will take the roles specified in~(\ref{eqn:parameters}).

\begin{lemma}\label{lemma:M} Given integer parameters $m, \rho, \sigma,\alpha$ with $m \geq 1$, $(\rho,\sigma)=1$, $(m,\alpha)=1$ the group $M$ with presentation $M = \pres{u,v}{v^m,u^\rho v^\alpha u^{-\sigma}v^{-\alpha}}$ is split metacyclic of the form $\gpres{u} \rtimes \gpres{v}$
with cyclic normal subgroup $\gpres{u} \cong \Z_{|\rho^m-\sigma^m|}$ and complement $\gpres{v} \cong \Z_m$.
Furthermore, in this situation we have the following:
\begin{enumerate}
\item[(a)] The group $M$ is finite if and only if $\rho^m \neq \sigma^m$;
\item[(b)] If $\rho^m = \sigma^m$, then either $\rho = \sigma = \pm 1$, in which case $M \cong \Z \times \Z_m$, or else $\rho = -\sigma = \pm 1$ and $m$ is even, in which case $M \cong \Z \rtimes \Z_m$;
\item[(c)] The group $M$ admits a presentation of the form $B(\rho^m-\sigma^m,m,\rho\hat{\sigma},0)$ where $\sigma\hat{\sigma} \equiv 1$~mod~$ \rho^m-\sigma^m$.
\end{enumerate}
\end{lemma}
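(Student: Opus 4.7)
The plan is to recognize $M$ as the shift extension of a cyclically presented group of the form covered by Lemma~\ref{lem:Pridescyclic}, and then read off the structure. Since $(m,\alpha)=1$, choose $\bar\alpha$ with $\alpha\bar\alpha\equiv 1$~mod~$m$. The element $w=v^\alpha$ satisfies $w^m=(v^m)^\alpha=1$, and using $v^m=1$ one has $v=v^{\alpha\bar\alpha}=w^{\bar\alpha}$, so $\gpres{w}=\gpres{v}$. A routine Tietze transformation then eliminates $v$ and produces
\[
M\cong\pres{u,w}{w^m,\;u^\rho w u^{-\sigma} w^{-1}}.
\]
This is exactly the shift extension $E_m(x_0^\rho x_1^{-\sigma})$ as in the display (\ref{eqn:ExtPres}), with the identifications $x=u$ and $t=w$.

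By Lemma~\ref{lem:Pridescyclic}, $G_m(x_0^\rho x_1^{-\sigma})\cong\Z_{|\rho^m-\sigma^m|}$, and crucially any single $x_i$—in particular $x_0=u$—serves as a sole generator. Consequently the kernel of the retraction $\nu^0:M\to\gpres{w}\cong\Z_m$ trivializing $u$ coincides with $\gpres{u}$, which is therefore normal in $M$ and cyclic of order $|\rho^m-\sigma^m|$. Since the retraction restricts to the identity on $\gpres{v}=\gpres{w}$, the intersection $\gpres{u}\cap\gpres{v}$ is trivial, yielding the claimed split decomposition $M=\gpres{u}\rtimes\gpres{v}$.

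Parts (a), (b), (c) then follow routinely. Part~(a) is immediate from $|M|=m|\rho^m-\sigma^m|$. For part~(b), $\rho^m=\sigma^m$ combined with $(\rho,\sigma)=1$ forces $|\rho|=|\sigma|=1$; one then separates $\rho=\sigma=\pm1$, where the defining relation collapses to $[u,v]=1$ and $M\cong\Z\times\Z_m$, from $\rho=-\sigma=\pm1$ with $m$ necessarily even, where it collapses to $vuv^{-1}=u^{-1}$ and $M\cong\Z\rtimes\Z_m$. For part~(c), $(\sigma,\rho)=1$ gives $(\sigma,\rho^m-\sigma^m)=(\sigma,\rho^m)=1$, so $\hat\sigma$ with $\sigma\hat\sigma\equiv 1$~mod~$\rho^m-\sigma^m$ exists. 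Setting $a=u$ and $b=v^\alpha=w$, the relation $bu^\sigma b^{-1}=u^\rho$ holds inside the cyclic subgroup $\gpres{u}$, so raising to the $\hat\sigma$-th power yields $bab^{-1}=a^{\rho\hat\sigma}$. Together with $a^{|\rho^m-\sigma^m|}=1$ and $b^m=1$, a count of orders confirms that these three relations present $M$ in the form $B(\rho^m-\sigma^m,m,\rho\hat\sigma,0)$.

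The main obstacle is identifying $\gpres{u}$ with the cyclic kernel of the retraction: from the defining relation alone one only knows a priori that $v^\alpha$ normalizes $\gpres{u^\sigma}$, not $\gpres{u}$. Lemma~\ref{lem:Pridescyclic} does the essential work by certifying both the order of the kernel and the fact that $u$ alone generates it; the Tietze reduction to the case $\alpha=1$ is the only other mild subtlety.
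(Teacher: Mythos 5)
Your proposal is correct and follows essentially the same route as the paper: both identify $\gpres{u}$ as the kernel of the retraction onto $\gpres{v}$ via Lemma~\ref{lem:Pridescyclic}, derive $v^\alpha u v^{-\alpha}=u^{\rho\hat\sigma}$ by raising the defining relation to the $\hat\sigma$-th power, and conclude with an order count. The only difference is cosmetic: you normalize $\alpha$ to $1$ by the Tietze substitution $w=v^\alpha$ at the outset, whereas the paper carries $\alpha$ through and applies the corresponding automorphism of $\Z_m$ at the end.
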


\begin{proof}
Using a retraction $\xi^0: M \rightarrow \gpres{v} \cong \Z_m$ with $\xi^0(v) = v$ and $\xi^0(u) = v^0 = 1$, we have $\ker \xi^0 = \gpres{u} \cong G_m(u_0^{\rho}u_1^{ -\sigma}) \cong \Z_{|\rho^m-\sigma^m|}$, by Lemma~\ref{lem:Pridescyclic}, where $u_i = v^iuv^{-i} \in M$. In detail,
\begin{alignat*}{1}
u^{\rho^m} &= (u^\rho)^{\rho^{m-1}} = (v^\alpha u^\sigma v^{-\alpha})^{\rho^{m-1}} = (v^\alpha u^\rho v^{-\alpha })^{\sigma\rho^{m-2}}\\
&\qquad \qquad \qquad = (v^{2\alpha } u^\sigma v^{-2\alpha })^{\sigma\rho^{m-2}} = \cdots = v^{m\alpha } u^{\sigma^m}v^{-m\alpha } = u^{\sigma^m}.
\end{alignat*}
Using the fact that $(\rho,\sigma) = 1$ we can choose $\hat{\sigma}$ with $\sigma\hat{\sigma} \equiv 1$~mod~$\rho^m-\sigma^m$. Note that if $\rho^m = \sigma^m$ then the fact that $(\rho,\sigma) = 1$ implies that either $\rho = \sigma = \pm 1$ or else $\rho = -\sigma = \pm 1$ and $m$ is even, in which case we have $\hat{\sigma} = \sigma = \pm 1$. Now the fact that $u^{\rho^m-\sigma^m} = 1$ implies $u = u^{\sigma\hat{\sigma}}$ and we have
$$
v^\alpha uv^{-\alpha } = v^\alpha u^{\sigma\hat{\sigma}}v^{-\alpha } = (v^\alpha u^\sigma v^{-\alpha })^{\hat{\sigma}} = u^{\rho\hat{\sigma}}.
$$
That $M \cong \gpres{u} \rtimes \gpres{v}$ has the presentation
\[\pres{u,v}{u^{\rho^m-\sigma^m}=1, v^\alpha uv^{-\alpha} = u^{\rho\hat{\sigma}}, v^m=1}\]
follows easily and since $(m,\alpha)=1$ we may apply an automorphism of $\Z_m$ to obtain the presentation $B(\rho^m-\sigma^m,m,\rho\hat{\sigma},0)$ in part~(c). This in turn implies that $M$ is finite if and only if $u$ has finite order, which occurs precisely when $\rho^m \neq \sigma^m$, as in~(a). As above, the claims in~(b) follow routinely by taking account of the fact that $(\rho,\sigma) = 1$.
\end{proof}

For the proof of Theorem~\ref{thm:Esubgroups} we will also need the following.

\begin{lemma}\label{lemma:tCapy} The group $E$ is a semidirect product $E = \G \rtimes \gpres{t}$, where $\G = \npres{y}_E$. In particular $\G \cap \gpres{t} = 1$. Furthermore, if $y = 1$ in $E$, then $|r-s| = 1$, so $(r,s) = 1$.
\end{lemma}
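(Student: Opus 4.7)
The plan is to leverage the retraction $\nu^0$ defined in the notation~(\ref{eqn:parameters}) for the first claim, and to compute the abelianization of $E$ for the second.

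First I would observe that $\nu^0 : E \to \pres{t}{t^n}$ is a well-defined homomorphism: both defining relators of $E$ map to the identity, since $t^n \mapsto 1$ trivially and $y^r t^A y^{-s} t^{-A} \mapsto 1^r t^A 1^{-s} t^{-A} = t^A t^{-A} = 1$. Since $\nu^0$ restricts to the identity on $\gpres{t}$ and $\Gamma = \ker \nu^0 = \npres{y}_E$ is normal in $E$, one obtains $E = \Gamma \rtimes \gpres{t}$ with $\Gamma \cap \gpres{t} = 1$. Because $\nu^0(t)$ has order $n$ in the target cyclic group, the element $t$ has order exactly $n$ in $E$, so $\gpres{t} \cong \Z_n$.

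For the furthermore claim, I would compute $E^{\mathrm{ab}}$ directly from the presentation. The relator $t^n$ contributes the relation $nt = 0$, while $y^r t^A y^{-s} t^{-A}$ abelianizes to $(r-s)y = 0$. Hence
\[
E^{\mathrm{ab}} \cong \Z_n \oplus \Z/(r-s)\Z,
\]
and $y$ maps to a generator of the second summand. If $y = 1$ in $E$, then $y = 0$ in $E^{\mathrm{ab}}$; since $y$ generates the second summand, this forces $|r-s| = 1$. The case $r = s$ is incompatible with the hypothesis, since then the second summand would be infinite cyclic with $y$ of infinite order. Finally, any common divisor of $r$ and $s$ divides $r - s = \pm 1$, so $(r,s) = 1$.

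I do not anticipate a real obstacle here; the argument is essentially bookkeeping, and the only point requiring a moment's care is ruling out the degenerate $r = s$ scenario, which is transparent from the $\Z$-summand appearing in $E^{\mathrm{ab}}$ in that case.
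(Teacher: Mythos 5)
Your proof is correct and follows essentially the same route as the paper: the first claim via the retraction $\nu^0$ with kernel $\npres{y}_E$, and the second by passing to an abelian quotient in which $y$ visibly has order $|r-s|$. The only cosmetic difference is that the paper kills $t$ to get $E/\npres{t}_E \cong \Z_{|r-s|}$ generated by the image of $y$, whereas you abelianize and read off the $\Z/(r-s)\Z$ summand; both arguments are the same bookkeeping.
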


\begin{proof}
The first claim follows from the fact that $\npres{y}_E$ is the kernel of a retraction of $E$ onto the cyclic subgroup generated by $\gpres{t}$:
\[E/\npres{y}_E \cong \pres{t,y}{t^n,y^rt^Ay^{-s}t^{-A},y} \cong \pres{t}{t^n}.\]
If $y=1$ in $E$, then the quotient $E/\npres{t}_E \cong \pres{t,y}{t^n,y^rt^Ay^{-s}t^{-A},t} \cong \pres{y}{y^{r-s}} \cong \Z_{|r-s|}$ obtained by trivializing $t$ is the trivial group, so $(r,s)$ is a divisor of $|r-s| = 1$.
\end{proof}

\begin{proof}[Proof of Theorem~\ref{thm:Esubgroups}]
If $r = s = 0$, then $E = \pres{t,y}{t^n,y^rt^Ay^{-s}t^{-A}}$ is the free product $E \cong \Z_n \ast \Z$, which is virtually free, so the conclusions of the theorem are easily verified in this case. So now we assume that either $r$ or $s\neq0$, so $g = (r,s) \neq 0$ and we have the tree of groups decomposition~(\ref{eqn:Esplit}). By Lemma~\ref{lemma:M} we have that $M$ is a metacyclic group, and since subgroups of metacyclic groups are metacyclic,  conclusion~(a) follows from Lemma~\ref{lem:treeofGpsE}(a). Recall that $m=n/(n,A)$. If $r^m-s^m\neq 0$ then by Lemma~\ref{lemma:M} we have that $M$ is finite, so conclusion~(b) follows from Lemma~\ref{lem:treeofGpsE}(b). If $r^{m}-s^{m}=0$ then Lemma~\ref{lemma:M} gives that $M$ is infinite and since $M$ embeds in $E$ (by~\cite[I.7.5]{DD89}) we have that $E$ is infinite. Suppose then that $E$ is finite. By the same reasoning it follows from Lemma~\ref{lemma:M} that $r^{m}-s^{m}\neq 0$.

Still assuming that $E$ is finite, it follows (from~\cite[I.7.11]{DD89}) that $E$ is equal to one of its subgroups $\<t\>$, $M$, or $\<y\>$. Because $\<t\> \cong \Z_n$ is nontrivial and $\<t\> \cap \<y\> = 1$ by Lemma~\ref{lemma:tCapy}, it follows that $E \neq \<y\>$, so $E = \<t\>$ or $M$. If $E = M$ then $t \in \<v\> = \<t^\alpha\>$ and $y \in \<u\> = \<y^g\>$, so $a = (n,A) = 1$ and $g = (r,s) = 1$, as claimed. So assume that $E = \<t\> \cong \Z_n$, in which case Lemma~\ref{lemma:tCapy} provides that $y \in \<t\> \cap \<y\> = 1$, so that $y=1$ in $E$ and hence $|r-s| = g = 1$. The fact that $y=1$ implies that $u =y^g = 1$ and so Lemma~$\ref{lemma:M}$ implies that $\dis |r^{m} - s^{m}| = \left|\left(\frac{r}{g}\right)^{m} - \left(\frac{s}{g}\right)^{m}\right| = 1$. If we now suppose that $rs \neq 0$, then $\min\{|r|,|s|\} \geq 1$, so the Mean Value Theorem implies that $\dis 1 = |r^{m} - s^{m}| = m|x|^{m -1}|r-s| = m|x|^{m-1}$ for some real number $x$ satisfying $|x| \geq 1$. Thus $m = 1$ and so $A \equiv 0 \mod n$. Using Lemma~\ref{lemma:M} as needed, the remaining components of the statement (c) are easily verified.

It remains to verify the global properties expressed in statement~(d). By~(\ref{eqn:Esplit}) and Lemma~\ref{lemma:M} the group $E$ is the fundamental group of a tree of groups where the edge groups are cyclic and the vertex groups are either cyclic or metacyclic, hence coherent. Then~\cite[Lemma~4.8]{Wilton09} provides that $E$ is coherent. If $r^{m} \neq s^{m}$ then $E$ is virtually free by~(b). Since free groups are subgroup separable~\cite{Hall49} and have geometric dimension  at most one, we can assume that $r^{m} = s^{m}$. With this, Lemma~\ref{lemma:M} implies that $\gpres{u}$ and $\gpres{y}$ are infinite cyclic and that $M$ is split metacyclic of the form $M = \gpres{u} \rtimes \gpres{v}$. Again, with $\G = \npres{y}_E $, any element of $\G \cap M$ has the form $u^iv^j$ with $1 = \nu^0(u^iv^j) = v^j$ so $j \equiv 0$~mod~$m$ and therefore $u^iv^j = u^i \in \gpres{u}$. Thus $\G \cap M$ is a subgroup of the infinite cyclic group $\gpres{u}$. Since $\G \cap \gpres{t}$ is trivial, the subgroup theorem for free products with amalgamation~\cite{KarrassSolitar70} (or the structure theorem for groups acting on trees~\cite[I.4.1]{DD89}) implies that $\G$ is the fundamental group of a graph of groups with vertex groups that are either trivial or infinite cyclic. That any such group is the fundamental group of an aspherical two-complex follows from~\cite[Theorem 4.6]{CCH81}. Thus $E$ contains the finite index subgroup $\Gamma$ with  geometric dimension at most two.

To prove that $E$ is subgroup separable under the assumption that $r^{m} = s^{m}$, it suffices to prove that the finite index subgroup $\G=G_n(x_0^rx_A^{-s})$ is subgroup separable. By~\cite[Theorem 1]{Tretkoff90} it suffices to show that $\G$ is the fundamental group of a finite tree of groups where each vertex group is either trivial or infinite cyclic. The group $\G$ decomposes as a free product $\G \cong \ast_{i=1}^{(n,A)} G_{m}(x_0^rx_{\alpha}^{-s})$. Letting $\bar{\alpha}$ be a multiplicative inverse to $\alpha$ modulo $m$, the assignment $x_i \rightarrow x_{\bar{\alpha} i}$ determines an isomorphism $G_{m}(x_0^rx_{\alpha}^{-s}) \cong G_{m}(x_0^rx_1^{-s})$. Letting $i$ range from $0$ to $m-1$ with subscripts modulo $m$, we have
\begin{alignat*}{1}
G_{m}(x_0^rx_1^{-s}) &\cong \pres{x_i}{x_i^{r} = x_{i+1}^{-s}}\\
&\cong \pres{x_i,y_i}{x_i^{r} = x_{i+1}^{-s},y_i=x_i^g}\\
&\cong \pres{x_i,y_i}{y_i^{\rho} = y_{i+1}^{-\sigma},y_i=x_i^g}.
\end{alignat*}
Using the fact that $r^{m}=s^{m}$, it follows that either $r=s=g$ or else $r=-s=g$ and $m$ is even, so by Lemma~\ref{lem:Pridescyclic} we have that $G_{m}(y_0^{\rho} y_1^{-\sigma}) \cong \Z$. Thus the group $G_{m}(x_0^rx_1^{-s})$ is obtained from $G_{m}(y_0^{\rho} y_1^{-\sigma}) \cong \Z = \gpres{y_0} = \gpres{y_1} = \cdots = \gpres{y_{m-1}}=Y$, say, by adjoining a $g$th root of $y_i$ for each $i = 0,\ldots, m-1$ and so can be expressed as the fundamental group of a finite tree of groups with infinite cyclic vertex groups, as in Figure~\ref{fig:star}. Therefore $\Gamma$ can also be expressed as the fundamental group of a finite tree of groups with all vertex groups trivial or infinite cyclic, as desired.
\end{proof}

\begin{figure}
%\psset{xunit=0.8cm,yunit=0.8cm,algebraic=true,dimen=middle,dotstyle=o,dotsize=3pt 0,linewidth=0.8pt,arrowsize=3pt 2,arrowinset=0.25}
\psset{xunit=0.8cm,yunit=0.8cm,algebraic=true,dimen=middle,dotstyle=o,dotsize=3pt 0,linewidth=0.8pt,arrowsize=3pt 2,arrowinset=0.25}
\begin{center}
\vspace{-2cm}
\begin{pspicture*}(-7.8,-10.94)(23.78,6.3)
\psline(0.,0.)(3.,0.)
\psline(0.,0.)(2.567486427580694,1.5517775111751448)
\psline(0.,0.)(1.0059739965799328,2.8263078951531444)
\psline(0.,0.)(-1.0338243139016443,2.8162399201747346)
\psline(0.,0.)(2.603505526245509,-1.490556598995521)
\begin{scriptsize}
\psdots[dotstyle=*](0.,0.)
\rput[bl](-0.42,-0.4){$Y$}
\psdots[dotstyle=*](3.,0.)
\rput[bl](3.16,-0.28){$\langle x_0 \rangle$}
\rput[bl](1.5,-0.48){$\langle y_0 \rangle$}
\psdots[dotstyle=*](2.567486427580694,1.5517775111751448)
\rput[bl](2.7,1.44){$\langle x_1 \rangle$}
\rput[bl](1.44,0.52){$\langle y_1 \rangle$}
\psdots[dotstyle=*](1.0059739965799328,2.8263078951531444)
\rput[bl](0.8,2.98){$\langle x_2 \rangle$}
\rput[bl](0.64,1.32){$\langle y_2 \rangle$}
\psdots[dotstyle=*](-1.0338243139016443,2.8162399201747346)
\rput[bl](-1.44,3.){$\langle x_3 \rangle$}
\rput[bl](-0.56,1.48){$\langle y_3 \rangle$}
\psdots[dotstyle=*](2.603505526245509,-1.490556598995521)
\rput[bl](2.78,-1.7){$\langle x_{m-1} \rangle$}
\rput[bl](0.68,-1.52){$\langle y_{m-1}\rangle$}
\psdots[dotsize=1pt 0,dotstyle=*](-1.,0.42)
\psdots[dotsize=1pt 0,dotstyle=*](-1.,0.)
\psdots[dotsize=1pt 0,dotstyle=*](-0.9,-0.38)
\psdots[dotsize=1pt 0,dotstyle=*](-0.7,-0.62)
\end{scriptsize}
\end{pspicture*}
\end{center}
\vspace{-7.5cm}
\caption{Graph of groups for the group $G_{m}(x_0^{r}x_1^{-s}$).\label{fig:star}}
\end{figure}
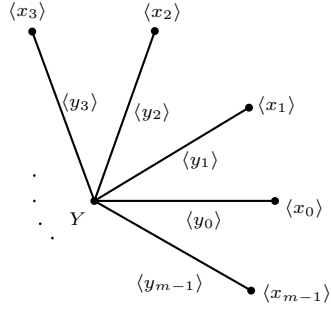

\begin{remark}
\em With $r^m=s^m$, the group $G_m(x_0^rx_1^{-s})$ appearing in the proof of Theorem~\ref{thm:Esubgroups} is a unimodular generalized Baumslag-Solitar group (see~\cite{Levitt07} for definitions) and so by~\cite[Proposition~2.6]{Levitt07} it is virtually $F_k\times \Z$ for some $k>1$, where $F_k$ denotes the free group of rank~$k$. Therefore, if $(n,A)=1$ then $E$ is virtually $F_k\times \Z$ if $r^n=s^n\neq 0$ and is virtually free otherwise. We thank Jack Button for pointing this out to us.\em
\end{remark}

\section{The Structure of Infinite Groups in the Class~$\mathfrak{M}$}\label{sec:MStructure}

In the notation~(\ref{eqn:parameters}) the groups $E$ and $G$ are finitely generated and when $\mu \neq 0$ they are virtually free by part~(b) of Theorem~\ref{thm:Esubgroups} and Corollary~\ref{cor:Gshort}. We can calculate the rational Euler characteristics~(\cite{Wall61}) $\chi(E)$ and $\chi(G) = n\chi(E)$ in terms of the amalgamated free product description (\ref{eqn:Esplit}) and these invariants relate directly to the rank of any finite index free subgroup as in~\cite[IV.1.10(1)]{DD89}. In particular, $E$ and $G$ possess nonabelian free subgroups precisely when $\chi(E) < 0$. The groups $E$ and $G$ are virtually infinite cyclic in the case when $\chi(E) = \chi(G) = 0$. The groups $E$ and $G$ are finite when $\chi(E) > 0$. The following is a refinement of parts (b) and~(c) of Theorem~\ref{thm:Esubgroups} and Corollary~\ref{cor:Gshort}.

\begin{theorem}[Virtually Free Case]\label{thm:ends} With the notation (\ref{eqn:parameters}), assume that $r^{n/(n,A)} \neq s^{n/(n,A)}$.
\begin{enumerate}
\item[(a)] The groups $G$ and $E$ are virtually infinite cyclic if and only if one of the following holds:
\begin{itemize}
\item[(i)] $\frac{1}{n} + \frac{1}{(r,s)} = 1$ and either $(n,A) = 1$ or $rs = 0$, or
\item[(ii)] $\frac{1}{n} + \frac{1}{|r-s|} = 1$ and $A \equiv 0$~mod~$n$.
\end{itemize}
\item[(b)] The groups $G$ and $E$ are virtually nonabelian free if and only if one of the following holds:
\begin{itemize}
\item[(i)] $\frac{1}{n} + \frac{1}{(r,s)} < 1$ and either $(n,A) = 1$ or $rs = 0$,
\item[(ii)] $\frac{1}{n} + \frac{1}{|r-s|} < 1$ and $A \equiv 0$~mod~$n$, or
\item[(iii)] $A \not \equiv 0$~mod~$n$, $rs \neq 0$, and $(n,A) \geq 2$.
\end{itemize}
\item[(c)] The groups $G$ and $E$ are finite in all other cases.
 \end{enumerate}
\end{theorem}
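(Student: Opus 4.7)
The plan is to use the tree of groups description (\ref{eqn:Esplit}) of $E$ to compute its rational Euler characteristic $\chi(E)$, and then to read off the classification from its sign. Since $r^{n/(n,A)} \ne s^{n/(n,A)}$ we have by Theorem~\ref{thm:Esubgroups}(b) that $E$ is virtually free; for a finitely generated virtually free group with free subgroup of rank $k$ and index $d$ one has $\chi = (1-k)/d$, so $\chi(E) > 0$, $\chi(E) = 0$, and $\chi(E) < 0$ correspond respectively to the group being finite, virtually infinite cyclic, and virtually nonabelian free.

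To set up the Euler characteristic I first pin down the vertex orders in (\ref{eqn:Esplit}). The cyclic vertex $\gpres{t}$ has order $n$, the metacyclic vertex $M$ has order $m\mu$ by Lemma~\ref{lemma:M}, and the two edge groups are cyclic of orders $m$ and $\mu$. The remaining vertex $\gpres{y}$ I claim has order $g\mu$: since $M$ embeds in $E$ by \cite[I.7.5]{DD89}, the element $u = y^g$ has order exactly $\mu$ in $E$; while the quotient $E/\npres{u} \cong \pres{t,y}{t^n, y^g} \cong \Z_n * \Z_g$ (the rewriting relation $y^rt^Ay^{-s}t^{-A}$ dies because $g$ divides both $r$ and $s$) shows that $y$ has order exactly $g$ modulo $\npres{u}$, forcing $|\gpres{y}| = g\mu$. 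Summing vertex contributions and subtracting edge contributions gives
\[
\chi(E) \;=\; \frac{1}{n} + \frac{1}{m\mu} + \frac{1}{g\mu} \;-\; \frac{1}{m} - \frac{1}{\mu}.
\]

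I would then analyse the sign of $\chi(E)$ along the cases singled out by the statement. When $(n,A) = 1$ (so $m = n$), $rs = 0$ (so $\mu = 1$ and $M \cong \Z_m$), or $A \equiv 0$~mod~$n$ (so $m = 1$, $M \cong \Z_\mu$, and $g\mu = |r-s|$), the formula collapses respectively to $\chi(E) = (1/n + 1/g - 1)/\mu$, $\chi(E) = 1/n + 1/g - 1$, and $\chi(E) = 1/n + 1/|r-s| - 1$. The trichotomy of signs for each expression is exactly the content of the corresponding clauses of (a)(i), (a)(ii), (b)(i), (b)(ii), and the matching parts of (c). The remaining generic case --- condition (b)(iii), with $rs \ne 0$, $A \not\equiv 0$~mod~$n$, and $(n,A) \ge 2$ --- has $g \ge 1$, $2 \le m < n$, and $(\rho,\sigma) = 1$, and here one must prove $\chi(E) < 0$. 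Clearing denominators gives $nmg\mu\,\chi(E) = -g\mu(n-m) + n(g+m-mg)$; the main obstacle is showing this quantity is negative throughout the case, which I would do by combining the identity $g+m-mg = g - (g-1)m \le 1$ for $g,m \ge 1$ (so the second summand contributes at most $n$) with the lower bound on $\mu = |\rho^m - \sigma^m|$ that follows from the coprimality $(\rho,\sigma) = 1$ and $m \ge 2$, dispatching the sparse small values of $\mu$ by direct inspection.

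Finally, the assertions for $G$ follow from the index-$n$ inclusion $G \le E$ via $\chi(G) = n\chi(E)$, which preserves sign and hence the trichotomy; the orders quoted in Theorem~\ref{thm:Esubgroups}(c) divide through by $n$.
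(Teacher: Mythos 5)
Your proposal follows the same route as the paper's proof: compute the rational Euler characteristic of $E$ from the splitting (\ref{eqn:Esplit}) and read off the trichotomy finite / virtually $\Z$ / virtually nonabelian free from its sign. Your formula $\chi(E) = \frac{1}{n} + \frac{1}{m\mu} + \frac{1}{g\mu} - \frac{1}{m} - \frac{1}{\mu}$ agrees with the paper's, your justification that the vertex group $\gpres{y}$ has order $g\mu$ is sound, and your reductions in the cases $(n,A)=1$, $rs=0$ and $A \equiv 0$~mod~$n$ reproduce the paper's computations exactly. Up to that point the proposal is correct and essentially identical to the published argument.

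The gap is in case (b)(iii), which is the only part of the argument carrying real content, and which you defer to an unspecified ``lower bound on $\mu$'' plus ``direct inspection of sparse small values.'' Follow your own reduction: after $g+m-mg \le 1$ and $n-m \ge n/2$ (from $(n,A)\ge 2$) you need $g\mu \ge 3$ to force $-g\mu(n-m)+n(g+m-mg)<0$. Coprimality of $(\rho,\sigma)$ with $m\ge 2$, $\rho\sigma\neq 0$ and $\mu\neq 0$ gives only $\mu\ge 2$, and $\mu=2$ genuinely occurs, namely when $\rho=-\sigma=\pm1$ and $m$ is odd. In that sub-case with $g=1$ and $(n,A)=2$ your cleared-denominator expression equals $-2(n-m)+n = 2m-n = 0$, so $\chi(E)=0$ and the groups are virtually infinite cyclic, not virtually nonabelian free. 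A concrete instance is $(r,n,s,A)=(1,6,-1,2)$: the relators $x_ix_{i+2}$ of $\Gamma=G_6(x_0x_2)$ force $x_i^2=1$ and $\Gamma\cong \Z_2\ast\Z_2$ is infinite dihedral, while $E\cong \Z_6\ast_{\Z_3}\Z_6$, even though all three conditions of (b)(iii) hold. So the promised ``direct inspection'' cannot close the case as stated. You are in good company: the paper's own proof founders at exactly this point, asserting that $\mu\neq 0$ forces $|r|/g\neq|s|/g$ and thence $\mu\ge 3$ via the Mean Value Theorem, an assertion that fails precisely when $\rho=-\sigma=\pm1$ and $m$ is odd. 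To make the argument (and the statement) correct, the family $r=-s=\pm1$, $(n,A)=2$, $n/2$ odd must be moved from clause (b)(iii) to clause (a); outside that family your inequality does go through, since then either $\mu\ge 3$, or $\mu=2$ with $g\ge 2$, or $\mu=2$, $g=1$, $(n,A)\ge 3$, each of which makes $-g\mu(n-m)+n(g+m-mg)$ strictly negative.
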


\begin{proof}
Using~(\ref{eqn:Esplit}) we have
\begin{alignat*}{1}
\chi(E) &= \frac{1}{|\<t\>|} - \frac{1}{|\<v\>|} + \frac{1}{|M|} - \frac{1}{|\<u\>|} + \frac{1}{|\<y\>|}\\
&= \frac{1}{n} - \frac{1}{m} + \frac{1}{m\mu} - \frac{1}{\mu} + \frac{1}{g\mu}\qquad \mathrm{(using~Lemma~\ref{lemma:M})}\\
&= \frac{1-(n,A)}{n}+\frac{1}{\mu}\left(\frac{(n,A)}{n} + \frac{1}{g} - 1\right).
\end{alignat*}
As a first case, suppose that $rs = 0$. Since $\mu \neq 0$, it follows that exactly one of $r$ or $s$ is equal to $0$ and this implies that $\mu = 1$. So in this case we have
$$
\chi(E) = \frac{1-(n,A)}{n}+\left(\frac{(n,A)}{n} + \frac{1}{g} - 1\right) = \frac{1}{n}+\frac{1}{g} - 1.
$$
Thus the claims are true when $rs = 0$. Next suppose that $(n,A) = 1$. Then
$$
\chi(E) = \frac{1}{\mu}\left(\frac{1}{n} + \frac{1}{g} - 1\right)
$$
and so the claims are true when $(n,A) = 1$. If $A \equiv 0$~mod~$n$ then, in particular, $m=1$ and so
$$
\chi(E) = \frac{1-n}{n}+\frac{g}{|r-s|}\left(\frac{n}{n} + \frac{1}{g} - 1\right) = \frac{1}{n} +\frac{1}{|r-s|} - 1
$$
and so the claims are true when $A \equiv 0$~mod~$n$. We can now assume that $rs \neq 0, A \not \equiv 0$~mod~$n$, and $(n,A) \geq 2$. Under the first two of these assumptions, together with the fact that $\mu \neq 0$, we will show that $\mu \geq 3$. With this, using the fact that $g \geq 1$ and $(n,A) \geq 2$, we have
\begin{alignat*}{1}
\chi(E) &= \frac{1-(n,A)}{n}+\frac{1}{\mu}\left(\frac{(n,A)}{n} + \frac{1}{g} - 1\right)\\
&\leq \frac{1-(n,A)}{n}+\frac{1}{\mu}\left(\frac{(n,A)}{n}\right)\\
&= \frac{1}{n}\left( 1-(n,A)+\frac{(n,A)}{\mu}\right)
\end{alignat*}
which is negative whenever $\mu\geq 3$. We complete the proof by showing that $\mu\geq 3$ in all cases. Since $A \not \equiv 0$~mod~$n$, we have that $m \geq 2$. Now, using the Mean Value Theorem, for some real number $x$ satisfying $\dis |x| \geq \min\left\{\frac{|r|}{g},\frac{|s|}{g}\right\}$, we have
\begin{alignat}{1}
\mu = \left|\left(\frac{r}{g}\right)^{m}-\left(\frac{s}{g}\right)^{m}\right|
\geq  \left|\left(\frac{|r|}{g}\right)^{m}-\left(\frac{|s|}{g}\right)^{m}\right|
= m|x|^{m-1}\left|\frac{|r|}{g}-\frac{|s|}{g}\right|.\label{eq:muineq}
\end{alignat}
Now the fact that $rs \neq 0$ implies that $|x| \geq 1$ and the fact that $\mu \neq 0$ implies that $|r|/g \neq |s|/g$, and so the integer $ \dis \left|\frac{|r|}{g}-\frac{|s|}{g}\right| \geq 1$. Thus we have $\mu \geq m$ and so the claim is proved unless $m = 2$. If $m=2$ then~(\ref{eq:muineq}) gives
$$
\mu = \left|\left(\frac{r}{g}\right)^{2}-\left(\frac{s}{g}\right)^{2}\right| \geq 2|x|\left|\frac{|r|}{g}-\frac{|s|}{g}\right|.
$$
The rightmost part of this expression is at least 4, except possibly when \linebreak $\left\{|r|/g,|s|/g\right\} = \{1,2\}$, in which case the middle part of this expression is equal to 3.
In all cases we are assured that $\mu \geq 3$.
\end{proof}

Next, in Theorem~\ref{thm:onlyCyclic}, we offer a refinement of Corollary~\ref{cor:Gshort}(a), which deals with the finite subgroups of groups from the class $\mathfrak{M}$. We use the splitting $E \cong \pres{t}{t^n} \ast_{t^{(n,A)} = v} M \ast_{u = y^g} \gpres{y}$ as in (\ref{eqn:Esplit}) where $M = \pres{u,v}{v^{m},u^{\rho}v^\alpha u^{-\sigma}v^{-\alpha}} \cong \gpres{u} \rtimes \gpres{v} \cong \Z_\mu \rtimes \Z_{m}$ by Lemma \ref{lemma:M}.

\begin{theorem}[Only Cyclic Finite Subgroups]\label{thm:onlyCyclic} With the notation (\ref{eqn:parameters}), if the condition
\begin{equation}\label{eqn:cyclicCondition}
(n,A)(n,gf) = n(n,A,gf)
\end{equation}
is satisfied, then each finite subgroup of $G$ is isomorphic to a subgroup of $\Z_l$ where
\[ l=\frac{|r^{n/(n,A)}-s^{n/(n,A)})|(n,f)}{(r,s)^{n/(n,A)-1}n}= \frac{g\mu(n,f)}{n}.\]
In particular, if $r^{n/(n,A)} = s^{n/(n,A)}$ and the condition (\ref{eqn:cyclicCondition}) is satisfied, then $G$ is torsion-free so has geometric dimension at most~2.
\end{theorem}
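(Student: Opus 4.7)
The plan is to exploit the amalgamated free product decomposition
\[ E \cong \gpres{t} \ast_{t^{(n,A)} = v} M \ast_{u = y^g} \gpres{y} \]
of~(\ref{eqn:Esplit}) and to analyze finite subgroups of $G$ through it. The standard theorem on finite subgroups in amalgamated free products (\cite[I.7.11]{DD89}) asserts that each finite subgroup of $E$ is conjugate into one of the vertex groups $\gpres{t}$, $M$, or $\gpres{y}$. Since $G = \ker \nu^f$ is normal in $E$, for any vertex group $V$ and any $x \in E$ we have $G \cap xVx^{-1} = x(G \cap V)x^{-1}$, so it will suffice to show that each of $G \cap \gpres{t}$, $G \cap \gpres{y}$, and $G \cap M$ is cyclic of order dividing $l$.

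The two outer vertex groups are immediate. Since $\nu^f$ is the identity on $\gpres{t}\cong \Z_n$, we have $G \cap \gpres{t} = 1$. Via the amalgamation $u = y^g$ and Lemma~\ref{lemma:M}, the vertex group $\gpres{y}$ has order $g\mu$, while $\nu^f(y) = t^f$ has order $n/(n,f)$ in $\Z_n$; thus $\nu^f|_{\gpres{y}}$ surjects onto $\gpres{t^f}$, and its kernel $G \cap \gpres{y}$ is cyclic of order exactly $g\mu(n,f)/n = l$.

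The main effort, and the step I expect to be delicate, is the analysis of $K := G \cap M$; it is here alone that the hypothesis~(\ref{eqn:cyclicCondition}) enters. Writing a general element of $M \cong \gpres{u}\rtimes\gpres{v}$ as $u^iv^j$ and noting $\nu^f(u^iv^j) = t^{gfi + (n,A)j}$, we have
\[ K = \{\, u^iv^j : gfi + (n,A)j \equiv 0 \bmod n \,\}. \]
The key step is to push $K$ forward along the projection $M \twoheadrightarrow M/\gpres{u} \cong \Z_m$: using $\mathrm{lcm}((n,A),(n,gf)) = (n,A)(n,gf)/(n,A,gf)$, a short congruence calculation identifies the image of $K$ as the subgroup of $\Z_m$ of order $n(n,A,gf)/((n,A)(n,gf))$, which condition~(\ref{eqn:cyclicCondition}) forces to be trivial. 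Hence $K \subseteq \gpres{u}$ is cyclic, and counting via $[M:K] = |\nu^f(M)| = n/(n,A,gf)$ yields $|K| = \mu(n,A,gf)/(n,A) = \mu(n,gf)/n$, which divides $l = g\mu(n,f)/n$ because $(n,gf) \mid g(n,f)$.

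For the final assertion, suppose $\mu = 0$. Then Lemma~\ref{lemma:M} gives that $\gpres{u}$ is infinite cyclic, so $\gpres{y}$ is infinite cyclic and $K \subseteq \gpres{u}$ is torsion-free; combined with $G \cap \gpres{t} = 1$, the Bass-Serre reduction shows that every torsion element of $G$ is trivial. Theorem~\ref{thm:Esubgroups}(d) supplies a finite-index subgroup of $E$ of geometric dimension at most two, so Serre's theorem as recalled in the introduction delivers $\gd(G) \leq 2$.
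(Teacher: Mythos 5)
Your proposal is correct and follows essentially the same route as the paper: the amalgam decomposition~(\ref{eqn:Esplit}), the reduction of finite subgroups to the vertex intersections via \cite[I.7.11]{DD89}, and the observation that condition~(\ref{eqn:cyclicCondition}) is exactly what forces $G\cap M$ into the cyclic normal subgroup $\gpres{u}$. The only (cosmetic) difference is that you compute $|G\cap M|$ directly and check it divides $l$, whereas the paper simply notes $G\cap M\leq\gpres{u}=\gpres{y^g}\leq\gpres{y}$ and so reduces everything to the single computation of $G\cap\gpres{y}$.
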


\begin{proof}
We first show that the intersection $G \cap M$ is contained in the cyclic normal subgroup $\gpres{u} \cong \Z_\mu$ of $M$ if and only if~(\ref{eqn:cyclicCondition}) is satisfied. We have $\nu^f(v)= v = t^{(n,A)}$ and $\nu^f(u)= t^{gf}$. If an arbitrarily given element $u^iv^j \in M$ is in the kernel of $\nu^f$, then $\nu^f(u^i) = \nu^f(v^{-j}) \in  \nu^f(\gpres{u}) \cap \nu^f(\gpres{v})$. Working in the finite group $\<t\> \cong \Z_n$, the condition~(\ref{eqn:cyclicCondition}) is equivalent to the assertion that the intersection $\nu^f(\<u\>) \cap \nu^f(\<v\>)= \<t^{gf}\> \cap \<t^{(n,A)}\>$ is trivial. This is because
\begin{alignat*}{1}
  \frac{n/(n,A)}{|\<t^{(n,A)}\> \cap \<t^{gf}\>|} &= \left|\frac{\<t^{(n,A)}\>}{\<t^{(n,A)}\> \cap \<t^{gf}\>}\right|\\
   &= \left|\frac{\<t^{(n,A)}\>\<t^{gf}\>}{\<t^{gf}\>}\right|\\&= \frac{n/(n,A,gf)}{n/(n,gf)}= \frac{(n,gf)}{(n,A,gf)}.
\end{alignat*}
Now the intersection $G \cap \<v\> = G \cap \<t^{(n,A)}\>$ is trivial and so if the condition (\ref{eqn:cyclicCondition}) holds and $u^iv^j \in G \cap M$, then $v^j = \nu^f(v^j) \in \nu^f(\<u\>) \cap \nu^f(\<v\>) = 1$, so $v^j = 1$ and $u^iv^j = u^i \in \gpres{u}$. Conversely, if $G \cap M \leq \gpres{u}$ and $\nu^f(u^i) = \nu^f(v^j) \in \nu^f(\<u\>) \cap \nu^f(\<v\>)$, then $u^{-i}v^{j} \in G \cap M \leq \gpres{u}$ so $v^{j} = 1$ and so $\nu^f(\<u\>) \cap \nu^f(\<v\>) = 1$, whence (\ref{eqn:cyclicCondition}) is satisfied.

In the decomposition~(\ref{eqn:Esplit}) we have $M = \pres{v,u}{v^{m}, u^{\rho}v^\alpha u^{-\sigma}v^{-\alpha}}$ and $v = t^{(n,A)}$, $u = y^g$. Now considering the decomposition $E \cong \<t\> \ast_{\<v\>} M \ast_{\<u\>} \<y\>$ and the fact that $G \cap \<t\> = 1$, it follows from~\cite[I.7.11]{DD89} that each finite subgroup of $G$ is conjugate to a subgroup of $\gpres{y}$, or $G\cap M$ (which, by the above, is contained in $\gpres{u}=\gpres{y^g}$), and hence to a subgroup of $G \cap \<y\>$.
Now Lemma~\ref{lemma:M} implies that $\gpres{y}\cong \Z_{g\mu}$ and the image of the restricted retraction $\nu^f|_{\<y\>}:\<y\> \rightarrow \<t\> \cong \Z_n$ is $\<t^{f}\>$ so $G \cap \<y\> \cong \pres{y^{n/(n,f)}}{y^{g\mu}}\cong \Z_{g\mu / (g\mu,n/(n,f))}$. Now since $f(r-s)\equiv 0$~mod~$n$ we have that $n/(n,f)$ divides $(r-s)$. Also $g\mu=(r-s)(\rho^m-\sigma^m)/(\rho-\sigma)$ so $r-s$, and hence $n/(n,f)$, divides $g\mu$. Therefore $(g\mu,n/(n,f))=n/(n,f)$ so $G \cap \<y\> \cong \Z_{g\mu / (n/(n,f))}$, as required. In particular, if $r^{n/(n,A)} = s^{n/(n,A)}$, then $\mu = 0$ and so $G$ is torsion-free so $G$ has geometric dimension at most two by Theorem~\ref{thm:Esubgroups}(d).
\end{proof}

Note that~(\ref{eqn:cyclicCondition}) is satisfied in the case $f=0$ (ie for the groups $\Gamma=G_n(x_0^rx_A^{-s})$), and note that in the case $(r,s)=1$ condition~(\ref{eqn:cyclicCondition}) is the same as condition~(\ref{eq:cyclicGbar}).

\begin{example}[Finite subgroups of groups in~$\mathfrak{M}$]
\em The condition (\ref{eqn:cyclicCondition}) is satisfied if $(r,n,s,f,A) = (6,24,2,12,8)$, so in this case every finite subgroup of $G$ is finite cyclic with order dividing $26$.
The condition is not satisfied if $(r,n,s,f,A) = (6,24,2,6,9)$. Here $M$ is metacyclic of order $8(3^8-1)$ generated by $v=t^9, u=y^2$ and $\gpres{u}$ has index 8 in~$M$. We have that $\mathrm{ker}(\nu^f|_M) =\mathrm{ker}(\nu^f)\cap M=G\cap M$ has index $8$ in $M$ so $G \cap M$ has order $3^8-1=6560$. Now $\gpres{u}$ is normal in $M$ so $\gpres{u}\cap \mathrm{ker}(\nu^f|_M)=\gpres{u^2}$ has index 2 in $\gpres{u}$ so it follows that $\gpres{u^2}$ has index 2 in $G\cap M$, which is therefore generated by $u^2=y^4$ and an element of $(G\cap M) \backslash \gpres{u^2}$, such as $v^4u=t^{12}y^2$. Since $(v^4u)u^2(v^4u)^{-1} = (v^4uv^{-4})^2 = (u^{3^4})^2 = (u^2)^{81}$ we have that $u^2$ and $v^4u$ do not commute so $G\cap M$ is a nonabelian metacyclic group of order $6560$.
\em
\end{example}

We now consider the occurrence of two-dimensional groups in the class~$\mathfrak{M}$. It follows from Theorem~\ref{thm:Esubgroups}(d) that each group in~$\mathfrak{M}$ possesses a finite index subgroup of geometric dimension at most two and so the torsion-free groups in class~$\mathfrak{M}$ are of geometric dimension at most two. The class~$\mathfrak{M}$ does contain virtually free groups (Corollary~\ref{cor:Gshort}(b)) and it contains free groups such as $G_3(x_0x_1x_2)$, which is free of rank two. We now illustrate that the class~$\mathfrak{M}$ also contains groups that are not virtually free as well as groups with torsion that have torsion-free subgroups that are not free.

\begin{example}[Geometric dimension of groups in~$\mathfrak{M}$]
\em The condition (\ref{eqn:cyclicCondition}) is satisfied when $f=0$. For example, with $(r,n,s,f,A) = (3,2,-3,0,1)$, the group $G =\G \cong G_2(x_0^3x_1^3)=\pres{x_0,x_1}{x_0^3x_1^3}=\gpres{x_0}\ast_{x_0^3=x_1^{-{3}}}\gpres{x_1}$ is a free product of two infinite cyclic groups with subgroups of index $3$ amalgamated. This group is torsion-free and has geometric dimension equal to two, being a nonabelian torsion-free one-relator group with nontrivial center, so it is not free.
The condition (\ref{eqn:cyclicCondition}) is not satisfied if $(r,n,s,f,A) = (3,2,-3,1,1)$.
Here $M$ is generated by $v=t$ and $u=y^3$ and $\gpres{u}$ has index 2 in $M$. We have that $\mathrm{ker}$ $(\nu^f|_M)=\mathrm{ker} (\nu^f)\cap M= G \cap M$ has index 2 in $M$ and that $\gpres{u}$ is normal in $M$ so $\gpres{u}\cap \mathrm{ker}$ $(\nu^f|_M)=\gpres{u^2}$ has index 2 in $\gpres{u}$ so it follows that $\gpres{u^2}$ has index 2 in $G\cap M$, which is therefore generated by $u^2=y^6$ and an element of $(G\cap M)\backslash \gpres{u^2}$, such as $ut=y^3t$. Then $\gpres{y^3t}\cap{y^6}=1$ so $G\cap M$ is an infinite dihedral group $\<y^6\> \rtimes \<y^3t\> \cong D_\infty$ in $E = G \rtimes_\theta \Z_2 \cong \pres{t,y}{t^2,y^3ty^3t^{-1}}$.
In this case, setting $x_i = t^iyt^{-1-i}$, we have $G \cong G_2(x_0x_1x_0^2x_1x_0) = G_2((x_0^2x_1)^2)$ is not torsion-free, because $x_0^2x_1=1$ implies $G\cong \Z_6$, a contradiction. Therefore $G$ has infinite geometric dimension. However $G$ does contain the index two subgroup $G  \cap \G$ of geometric dimension two, which is therefore torsion-free but not free.
\em
\end{example}

Generalizing the previous example, we now wish to identify when the group $G$ possesses a finite index subgroup of geometric dimension exactly two. By Corollary~\ref{cor:Gshort}(b), a necessary condition is that $r^{n/(n,A)} = s^{n/(n,A)} \neq 0$, which means that either  $r = s \neq 0$ or else $r = -s \neq 0$ and $n/(n,A)$ is even. In particular, $g = (r,s) = r \neq 0$.

\begin{theorem}[Two-Dimensional Groups in $\mathfrak{M}$] \label{thm:2dim} With the notation (\ref{eqn:parameters}), assume that $r^{n/(n,A)} = s^{n/(n,A)} \neq 0$.
\begin{enumerate}
\item[(a)] If $\dis \frac{(n,A)}{n} + \frac{1}{(r,s)} > 1$, then $\G$ is free of rank $(n,A)$ and has geometric dimension one, so $G$ and $E$ are virtually free.
\item[(b)] If $\dis \frac{(n,A)}{n} + \frac{1}{(r,s)} \leq 1$, then $\G$ has geometric dimension two, so both $G$ and $E$ possess non-free finite index subgroups of geometric dimension two.
\item[(c)] The group $G$ is torsion-free and satisfies $\gd (G) = \gd (\G) \leq 2$ unless $n/(n,A)$ is even, $r = -s \neq 0$, $(r,s)f \equiv n/2$~mod~$n$, and $t^{n/2}$ is an odd power of $t^{(n,A)}$ in $\<t\> \cong \Z_n$, in which case $G$ contains the infinite dihedral group $\<y^{2r}\> \rtimes \<y^r t^{n/2}\> \cong \Z \rtimes \Z_2$.
\end{enumerate}
\end{theorem}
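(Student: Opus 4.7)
The plan is to exploit the free-product decomposition $\Gamma \cong \ast_{i=1}^{(n,A)} G_m(x_0^r x_1^{-s})$ and the star-shaped graph of groups for $G_m(x_0^r x_1^{-s})$ established in the proof of Theorem~\ref{thm:Esubgroups} (Figure~\ref{fig:star}), together with the split metacyclic structure $M \cong \gpres{u} \rtimes \gpres{v}$ from Lemma~\ref{lemma:M}. Since $r^m = s^m \neq 0$ with $(\rho,\sigma) = 1$, we are in one of two regimes: either $\rho = \sigma$ (so $r = s$ and $M \cong \Z \times \Z_m$) or $\rho = -\sigma$ with $m$ even (so $r = -s$ and $M \cong \Z \rtimes \Z_m$ with $v$ acting on $u$ by inversion).

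For (a) and (b), rewrite the threshold as $(n,A)/n + 1/g = 1/m + 1/g$; as $m,g \geq 1$, this exceeds $1$ precisely when $m = 1$ or $g = 1$. If $m = 1$, then $A \equiv 0$~mod~$n$ and $r = s$, so $E \cong \Z_n \ast \Z$, and the Kurosh subgroup theorem (or a rank-from-Euler-characteristic computation) gives $\Gamma \cong F_n = F_{(n,A)}$. If $g = 1$, each edge group $\gpres{x_i^g} = \gpres{x_i}$ in Figure~\ref{fig:star} absorbs its leaf into the centre, collapsing $G_m(x_0^r x_1^{-s})$ to $\Z$, and again $\Gamma \cong F_{(n,A)}$. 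This proves (a). For (b) one has $m, g \geq 2$. Figure~\ref{fig:star} presents $G_m(x_0^r x_1^{-s})$ as the fundamental group of a finite tree of infinite cyclic groups, hence the fundamental group of an aspherical $2$-complex, so $\gd \leq 2$. A short abelianisation computation exhibits nontrivial $g$-torsion, so the group is not free and therefore has geometric dimension exactly two. Because a free product of torsion-free groups with one factor of geometric dimension two again has geometric dimension two, $\gd(\Gamma) = 2$, and then $G \cap \Gamma$ has finite index in both $G$ and $\Gamma$; Serre's theorem together with $\cd = \gd$ in dimension two supplies the required non-free finite-index subgroup of $G$ (and of $E$) of geometric dimension two.

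For (c) the key is to reduce all torsion in $G$ to $G \cap M$. By Lemma~\ref{lem:treeofGpsE}(a) combined with~(\ref{eqn:Esplit}), every finite-order element of $E$ is conjugate into $\gpres{t}$, $M$, or $\gpres{y}$; since $\nu^f$ is abelian-valued, a torsion element $x = hzh^{-1} \in G$ forces $\nu^f(z) = \nu^f(x) = 1$, so $z$ itself lies in $G$. Now $G \cap \gpres{t} = 1$ by Lemma~\ref{lemma:tCapy}, and $\gpres{y}$ is infinite cyclic because $u = y^g$ has infinite order by Lemma~\ref{lemma:M}, so only $G \cap M$ can contribute torsion. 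In the case $r = s$, $M$ is abelian with all torsion in $\gpres{v}$, and $\nu^f(v^j) = t^{(n,A)j} = 1$ forces $m \mid j$, so $G$ is torsion-free. In the case $r = -s$, the torsion elements of $M$ are $v^j$ with $j$ even (again ruled out) and $u^iv^j$ with $j$ odd, each of order two. The constraint $2gf = f(r-s) \equiv 0$~mod~$n$ forces $gf \equiv 0$ or $n/2$~mod~$n$, and the first option admits no odd $j$ with $\nu^f(u^iv^j) = 1$. Hence $G$ has torsion precisely when $n$ is even, $gf \equiv n/2$~mod~$n$, and some odd $j \in \{1, 3, \ldots, m-1\}$ satisfies $(n,A)j \equiv n/2$~mod~$n$, which reduces to $m/2$ being odd --- equivalently, to $t^{n/2}$ being an odd power of $t^{(n,A)}$ in $\gpres{t}$. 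Under exactly these conditions a direct calculation confirms that $y^{2r} = u^2$ and $y^r t^{n/2} = uv^{m/2}$ both lie in $G$, that $uv^{m/2}$ has order two and inverts $u^2$, producing the claimed $\gpres{y^{2r}} \rtimes \gpres{y^r t^{n/2}} \cong \Z \rtimes \Z_2$. When $G$ is torsion-free the subgroup $G \cap \Gamma$ is torsion-free of finite index in both $G$ and $\Gamma$, and Serre's theorem with $\cd = \gd$ in dimension two yields $\gd(G) = \gd(\Gamma)$.

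The main obstacle is the bookkeeping in the $r = -s$ case of (c): extracting from the combined constraints $f(r-s) \equiv 0$~mod~$n$ and $\nu^f(u^iv^j) = 1$ the clean equivalence between "$G$ has torsion" and the pair of conditions $gf \equiv n/2$~mod~$n$ together with $m/2$ odd, and recognising this last condition as "$t^{n/2}$ is an odd power of $t^{(n,A)}$ in $\gpres{t}$".
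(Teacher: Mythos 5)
Your treatment of parts (a) and (b) and your torsion analysis in part (c) are correct and follow essentially the same route as the paper: the free product decomposition $\G \cong \ast_{i=1}^{(n,A)} G_{m}(x_0^rx_1^{-s})$, the threshold $\frac{1}{m}+\frac{1}{g}>1$ holding precisely when $m=1$ or $g=1$, and the reduction of all torsion in $G$ to $G\cap M$ via the splitting (\ref{eqn:Esplit}). Your observation that a torsion element of $G$ conjugate into a vertex group forces the vertex-group representative itself into $G$ (because $\nu^f$ has abelian image) is a tidy shortcut, and using $g$-torsion in the abelianisation in place of the paper's ``nonabelian with nontrivial centre'' argument for non-freeness is equally valid. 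Two harmless slips: the torsion elements of $M\cong\Z\rtimes_{-1}\Z_{m}$ are the powers of $v$ together with the elements $u^iv^j$ with $j$ odd, not ``$v^j$ with $j$ even''; and $u^iv^j$ with $j$ odd need not have order two in $M$ --- but since you immediately impose $\nu^f(u^iv^j)=1$, which forces $j\equiv m/2$ with $m/2$ odd and hence order two, nothing breaks.

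The genuine gap is the final step of (c). From torsion-freeness of $G$ you conclude $\cd(G)=\cd(G\cap\G)=\cd(\G)\leq 2$ by Serre's theorem and then invoke ``$\cd=\gd$ in dimension two'' to obtain $\gd(G)\leq 2$. That equivalence is only available in the direction $\gd(G)\leq 2\Rightarrow\cd(G)=\gd(G)$; the converse for $\cd=2$ is the Eilenberg--Ganea problem, which is open, so $\cd(G)\leq 2$ alone does not deliver $\gd(G)\leq 2$. (The device that saves you in part (b) --- bounding $\gd(G\cap\G)$ above by $\gd(\G)$ because $G\cap\G$ is a \emph{subgroup} of $\G$ --- is unavailable here, since $G$ is not a subgroup of $\G$ and $\gd(E)=\infty$.) The paper closes this by a direct construction: the Karrass--Solitar/Bass--Serre subgroup theorem applied to the splitting (\ref{eqn:Esplit}) exhibits $G$ as the fundamental group of a graph of groups with trivial edge groups whose vertex groups are conjugates of $G\cap\gpres{t}=1$, of $G\cap\gpres{y}\cong\Z$, and of $G\cap M$ (which is infinite cyclic once torsion-free, as every torsion-free subgroup of $\Z\rtimes\Z_{m}$ is infinite cyclic); then \cite[Theorem~4.6]{CCH81} provides an aspherical two-complex, so $\gd(G)\leq 2$, after which Serre's theorem gives $\gd(G)=\cd(G)=\cd(\G)=\gd(\G)$. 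You already have every ingredient for this; you just need to run the graph-of-groups argument for $G$ itself rather than passing through cohomological dimension alone.
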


\begin{proof}
As in the final step of the proof of Theorem~\ref{thm:Esubgroups}(d), the index $n$ subgroup $\G = \npres{y}_E$ is isomorphic to the free product of $(n,A)$ copies of the group $G_{m}(x_0^rx_1^{-s}) = G_{m}(x_0^gx_1^{\pm g})$. If either $(n,A)=n$ or $g = 1$, then the group $G_{m}(x_0^rx_1^{-s})$ is infinite cyclic, so that $\G$ is free of rank $(n,A)$, as in (a). If both $m$ and $g$ are at least two, then (as in the proof of Theorem~\ref{thm:Esubgroups}) $G_{m}(x_0^rx_1^{-s})$ is the free product of $m$ infinite cyclic groups with a nontrivial proper common subgroup of index $g$ amalgamated. It follows that $G_{m}(x_0^rx_1^{-s})$ is not free, being nonabelian with nontrivial center (see e.~g.~\cite[Cor.~4.5]{MKS76}). Thus $\G$ is two-dimensional, which proves (b).

For the final claim, the group $G$ is a subgroup of $E = \<t\> \ast_{t^{(n,A)} = v} M \ast_{u = y^g} \<y\>$ where $M$ is the semidirect product $M = \pres{u,v}{v^{m},uv^{\alpha}u^{-\epsilon} v^{-\alpha}} \cong \Z \rtimes_\epsilon \Z_{m}$. Here, $\epsilon = r/s = \pm 1$ and $m$ is even if $\epsilon = -1$. The group $G$ is the kernel of the retraction $\nu = \nu^f: E \rightarrow \<t\> \cong \Z_n$ where $\nu(t) = t$ and $\nu(y) = t^f$. Thus $\nu(v) = v = t^{(n,A)}$ and $\nu(u) = t^{gf}$. By the subgroup theorem for amalgamated products~\cite{KarrassSolitar70} (or the structure theorem for groups acting on trees~\cite[I.4.1]{DD89}), $G$ is the fundamental group of a graph of groups where the vertex groups are conjugates of the intersections $G \cap \<t\>$, $G \cap M$, and $G \cap \<y\>$. Moreover, any element of finite order in $G$ is conjugate to an element of a vertex group~\cite[I.4.9]{DD89}. Now $G \cap \<t\>$ is trivial and $G \cap \<y\>$ is infinite cyclic. In addition, each torsion-free subgroup of $M \cong \Z \rtimes_\epsilon \Z_{m}$ is infinite cyclic. This means that if $G$ is torsion-free, then $G$ is the fundamental group of a graph of groups where each vertex group is either trivial or infinite cyclic. As noted previously, this in turn implies that $\gd (G) \leq 2$~\cite[Theorem~4.6]{CCH81} and, since $G\cap \Gamma$ is a finite index subgroup of $G$ and of $\Gamma$, it follows that $\gd (G) = \cd (G) = \cd (G) \cap \G = \cd (\G) = \gd (\G)$~\cite[IV.3.18]{DD89}.

It remains to determine the conditions under which $G \cap M$ is not torsion-free. First note that if $r = s$, so that $\epsilon = 1$ and $M \cong \Z \times \Z_{m} = \<u\> \times \<v\>$, then all torsion elements of $M$ lie in $\<v\>$ and so it follows that $G \cap M$ is torsion-free because $\nu|_{\<v\>}$ is injective. So now assume that $r = -s = g \neq 0$ {and $m$ is even}. The fact that $\nu = \nu^f: E \rightarrow \Z_n$ is a retraction implies the condition $f(r-s) \equiv 0$~mod~$n$, which here means that $2gf \equiv 0$~mod~$n$. Now this implies that $\nu(u) = t^{gf}$ is either trivial, so that $u \in G \cap M$, or else $t^{gf} = t^{n/2}$, in which case $\nu(u^2) = 1$. If $\nu(u) = 1$, then the fact that $\nu|_{\<v\>}$ is injective implies that $G \cap M = G \cap (\<u\> \rtimes \<v\>) = \<u\>$, which is infinite cyclic. Thus the group $G$ is torsion-free unless $\nu(u) = t^{gf} = t^{n/2}$ and $G \cap M$ contains a nontrivial element of finite order. The elements of finite order in $M$ have the form $u^iv^j$ where $i=0$ or $j$ is odd. Thus every nontrivial element of finite order in $G \cap M$ has the form $u^iv^j$ where $i \neq 0$ and $0<j<m$ is odd. Since $\nu|_{\<v\>}$ is injective and $v^j \neq 1$, this implies that $1 = \nu(u^iv^j) = t^{gfi}v^j = t^{in/2}v^j = t^{n/2}v^j$ and hence $v^j = t^{n/2}$. In other words, $t^{n/2}$ is an odd power of $v = t^{(n,A)}$ in $\<t\> \cong \Z_n$. Now we find that $u^2 = y^{2g}$ and $uv^j = y^gt^{n/2}$ lie in $G \cap M$ and since $\gpres{u}\cong \Z$, $\gpres{uv^j}\cong \Z_2$ and $(u^2(uv^j)^{-1})^2=1$ we have that $G$ contains the infinite dihedral group $\gpres{u^2}\rtimes \gpres{uv^j}=\<y^{2g}\> \rtimes \<y^gt^{n/2}\> \cong \Z \rtimes \Z_2$.
\end{proof}

For the proof of the following, note that virtually free groups are either large or virtually abelian and that the free product of two nontrivial cyclic groups is large unless both cyclic groups are of order~2~\cite{Pride80} and so the free product is virtually cyclic.

\begin{corollary}[The Tits alternative]\label{cor:Tits}
Let $E$ and $G$ be as defined at~(\ref{eqn:parameters}). Then $E$ (and hence $G$) is either large or is virtually abelian.
\end{corollary}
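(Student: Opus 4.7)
The plan is to separate $E$ into two regimes according to whether it is virtually free, and to handle each regime by combining earlier structural results. For the virtually free regime, Theorem~\ref{thm:Esubgroups}(b) together with Theorem~\ref{thm:2dim}(a) cover every case: $E$ is virtually free whenever $r^{n/(n,A)} \neq s^{n/(n,A)}$, or $r = s = 0$, or $\frac{(n,A)}{n} + \frac{1}{(r,s)} > 1$. In each such case $E$ contains a finite-index free subgroup $F$; if $F$ has rank at most one then $E$ is virtually cyclic and so virtually abelian, while if $F$ has rank at least two the surjection $F \twoheadrightarrow F_2$ shows $E$ is large.

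The remaining regime is $r^{n/(n,A)} = s^{n/(n,A)} \neq 0$ with $\frac{(n,A)}{n} + \frac{1}{(r,s)} \leq 1$, which forces $m := n/(n,A) \geq 2$ and $g := (r,s) \geq 2$. As in the proof of Theorem~\ref{thm:2dim}, the finite-index subgroup $\Gamma$ decomposes as a free product of $(n,A)$ copies of $N := G_m(x_0^g x_1^{\mp g})$, with the sign determined by whether $r = s$ or $r = -s$ (in the latter case $m$ is even). A quick inspection of the abelianization relation matrix $g(I \mp P)$, where $P$ is the cyclic permutation matrix on $\Z^m$, shows this matrix is singular in both cases: $(1,\ldots,1)$ lies in the kernel of $I - P$, and the alternating vector lies in the kernel of $I + P$ when $m$ is even. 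Hence $N$ surjects onto $\Z$.

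If $(n,A) \geq 2$, the induced surjection $\Gamma \twoheadrightarrow F_{(n,A)} \twoheadrightarrow F_2$ exhibits $\Gamma$, and therefore $E$, as large. If $(n,A) = 1$ then $\Gamma = N$; the element $z := x_0^g$ is central in $N$ with quotient $N/\gpres{z} \cong \ast_{i=1}^m \Z_g$. When $(m,g) = (2,2)$ this quotient is the infinite dihedral group, and the preimage in $N$ of an infinite cyclic index-two subgroup sits in a central extension $1 \to \Z \to H \to \Z \to 1$; this extension splits since $H^2(\Z;\Z) = 0$, giving $H \cong \Z^2$, so $E$ is virtually abelian. When $(m,g) \neq (2,2)$, the Euler characteristic $\chi(\ast_{i=1}^m \Z_g) = m/g - (m-1)$ is negative, so the quotient is virtually nonabelian free and hence large; pulling back along $N \twoheadrightarrow N/\gpres{z}$ shows $N$, and therefore $E$, is large.

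The main obstacle will be the borderline subcase $(n,A) = 1$ with $(m,g) = (2,2)$, where the direct surjection onto $F_2$ is unavailable. Recognizing $N$ as a central extension of $D_\infty$ by $\Z$ and exploiting the vanishing of $H^2(\Z;\Z)$ to extract its virtually abelian structure is what distinguishes this subcase from the remaining ones, each of which produces a finite-index subgroup surjecting onto $F_2$ directly.
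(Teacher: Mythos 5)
Your proof is correct, and its overall skeleton matches the paper's: both arguments use Theorem~\ref{thm:Esubgroups}(b) and Theorem~\ref{thm:2dim}(a) to dispose of the virtually free regime, then reduce to $r^{n/(n,A)}=s^{n/(n,A)}\neq 0$ with $(n,A)/n+1/(r,s)\leq 1$, produce a large quotient in all but one exceptional parameter choice, and identify that exception as virtually $\Z\times\Z$. The tactics in the residual regime differ, though. The paper stays inside $E$: killing $y^g$ gives a surjection $E\twoheadrightarrow \Z_n\ast\Z_g$, which by Pride's dichotomy for free products of nontrivial cyclic groups is large unless $n=g=2$, and the single exceptional group $E=\pres{t,y}{t^2,y^2ty^{\pm 2}t}$ is then recognized directly as a $\Z_2$-extension of $\Z\times\Z$ or of the Klein bottle group. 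You instead descend to $\Gamma$ and exploit its decomposition as a free product of $(n,A)$ copies of $N=G_m(x_0^gx_1^{\mp g})$: the surjections $N\twoheadrightarrow\Z$ yield $F_{(n,A)}$ when $(n,A)\geq 2$, and when $(n,A)=1$ your central quotient $N/\gpres{x_0^g}\cong\ast_{i=1}^{m}\Z_g$ plays the role of the paper's $\Z_n\ast\Z_g$ (it is in effect the kernel of the retraction $\Z_n\ast\Z_g\rightarrow\Z_n$). Your handling of the borderline case $(m,g)=(2,2)$ via the split central extension $1\rightarrow\Z\rightarrow H\rightarrow\Z\rightarrow 1$ and $H^2(\Z;\Z)=0$ is a clean alternative to the paper's direct identification of the Klein bottle group. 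The paper's route is shorter because one quotient of $E$ covers all non-exceptional subcases at once; yours costs an extra case split on $(n,A)$ but yields more explicit structural information about $\Gamma$ (the surjections onto $\Z$ and the central quotients of the factors) along the way.
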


\begin{proof}
If $A\equiv 0$~mod~$n$ then $E\cong \Z_n*\Z_{|r-s|}$ and if $rs=0$ then $E\cong \Z_n*\Z_{|r|}$ or $\Z_n*\Z_{|s|}$ so we may assume that $A \not \equiv 0$~mod~$n$, hence $m\geq 2$, and that $rs\neq 0$. By Theorem~\ref{thm:Esubgroups}(b)
we may assume $r^m=s^m$ and then by Theorem~\ref{thm:2dim}(a) we may assume $g\geq 2$.
By killing $y^g$, we see that $E$ maps onto $\Z_n*\Z_g$ so we may assume $g=n=2$, and therefore $A=1$. Then $E=\pres{t,y}{t^2, y^2ty^{\pm 2}t}$, which is a $\Z_2$-extension of $G_2(x_0x_1x_0^{-1}x_1^{-1})\cong \Z\times \Z$ or of
$G_2(x_0^2x_1^2)=\pres{x_0,x_1}{x_0^2x_1^2}$, which is the (virtually $\Z\times \Z$) fundamental group of the Klein bottle.
\end{proof}

\section{Free products and Metacyclic Groups in the Class~$\mathfrak{M}$}\label{sec:freeprodmeta}

We now prove Theorem~\ref{mainthm:Mgroups}.

\begin{proof}[Proof of Theorem~\ref{mainthm:Mgroups}]
The group $E$ acts on the acts on the standard graph $T$ associated to the amalgamated free product decomposition~(\ref{eqn:Esplit}). When $g=(r,s)=1$ this decomposition takes the simplified form $E\cong \gpres{t} \ast_{t^{(n,A)}=v} M$. As in \cite[page 13]{DD89}, the standard graph has vertices and edges determined by coset spaces:
$$
\mathrm{Vert}(T) = E/\<t\> \sqcup E/M, \quad  \mathrm{Edge}(T) = E/\<v\>.
$$
Given $w \in E$, the edge $w\<v\> \in \mathrm{Edge}(T)$ has endpoints $w\<t\>$ and $wM$. The action of $E$ is by left multiplication on cosets and the normal form theorem for free products with amalgamation amounts to the fact that $T$ is a tree \cite[I.7.6]{DD89}. The $E$-action on $T$ restricts to an action by the cyclically presented group $G$ and the Structure Theorem for groups acting on trees \cite[I.4.1]{DD89} describes a splitting of $G$ as the fundamental group of a graph of groups where the underlying graph is the orbit graph $G\backslash T$. In particular, since the intersection $G \cap \gpres{t}=1$ is trivial, in the present situation the group $G$ acts freely on the edges of $T$ and the Structure Theorem implies that $G$ is the free product of the vertex groups together with the fundamental group $\pi_1(G \backslash T)$ of the orbit graph, which is a free group.

Consider the orbit map $q: T \ra G \backslash T$. Taking $H = \gpres{t}, \gpres{v}$, and $M$ in turn, we must consider the $G$-action on the coset space $E/H$. The set of orbits of the $G$-action on $E/H$ is the double coset space $G \backslash E/H$, which is in bijective correspondence with the finite coset space $\<t\>/\nu^f(H)$. Thus the orbit graph is finite. Moreover it follows that the group $G$ acts transitively on the vertices of $T$ of the form $w\gpres{t}$. Similarly, the number of vertices in $G \backslash T$ of the form $q(wM)$ is $|\gpres{t}/\nu^f(M)| = |\gpres{t}/\gpres{t^{(n,A)},t^f}| = (n,A,f)$. The number of edges in the orbit graph $G \backslash T$ is $|\gpres{t}/\nu^f(\gpres{v})| = |\gpres{t}/\nu^f(\gpres{t^{(n,A)}})| = (n,A)$. Thus the Euler characteristic of the connected orbit graph is $\chi(G \backslash T) = 1+(n,A,f) - (n,A)$ and so its fundamental group is free of rank $1 - \chi(G \backslash T) = (n,A) - (n,A,f)$.

We now describe the $G$-stabilizers of representatives for each $G$-orbit on the elements of $T$. We have already noted that $\mathrm{Stab}_G(1\gpres{t}) = G \cap \gpres{t} = 1$ and it follows that $\mathrm{Stab}_G(w\gpres{v}) = 1$ for each $w \in E$. For $w \in E$, the $G$-stabilizer of the vertex $wM \in \mathrm{Vert}(T)$ is $G \cap wMw^{-1} = w(G \cap M)w^{-1}$. In the graph of groups decomposition of $G$, there are thus $(n,A,f)$ vertex groups isomorphic to the intersection $\bar{G} = \mathrm{ker}\nu^f|_M=G \cap M$. It remains to show that $\bar{G}$ has the presentation given in the statement.

We set $\bar{\Gamma}= \mathrm{ker}\nu^0|_M = \Gamma \cap M$. Since $(\alpha,m)=1$ and $(r,s)=1$ Lemma~\ref{lem:Pridescyclic} gives that the group $\bar{\Gamma}$ is cyclic of order $|r^m-s^m|$ generated by $u$ so has the presentation $B(r^m-s^m,1,1,1)$. This provides the required presentation in the case $f=0$ so now assume $f\neq 0$.

Let $N=\bar{\Gamma} \cap \bar{G}= (\Gamma \cap G) \cap M$; then $N$ is normal in $M$ and is cyclic, being a subgroup of $\bar{\Gamma}$. Further, $\bar{G}/N=\bar{G}/(\bar{\Gamma}\cap \bar{G})\cong (\bar{\Gamma}\cdot \bar{G})/\bar{\Gamma}$ so $\bar{G}/N$ is a subgroup of $M/\bar{\Gamma}$ so is cyclic. Now
\[u^j \in N \Leftrightarrow u^j\in\bar{G}\Leftrightarrow \nu^f|_M (u^j)=1 \Leftrightarrow t^{jf}=1 \Leftrightarrow n| jf \Leftrightarrow n/(n,f)|j\]
so $N=\pres{u^{n/(n,f)}}{u^{r^m-s^m}}\cong \Z_{|r^m-s^m|/(n/(n,f),r^m-s^m)}$. But $f(r-s)\equiv 0$~mod~$n$ implies that $n/(n,f)$ divides $r^m-s^m$ so $(n/(n,f),r^m-s^m)=n/(n,f)$ and $N\cong \Z_{|r^m-s^m|(n,f)/n}$. Also $M/\bar{G} \cong \mathrm{Im}(\nu^f|_M) = \pres{t^{(n,A)},t^f}{t^n}=\pres{t^{(n,A,f)}}{t^n}\cong \Z_{n/(n,A,f)}$ so $\bar{G}$ has index $n/(n,A,f)$ in $M$. Now Lemma~\ref{lemma:M} implies that $M$ has order $m(r^m-s^m)$ so $\bar{G}$ has order $|r^m-s^m|(n,A,f)/(n,A)$. Hence $\bar{G}/N$ is cyclic of order $\bar{G}/|N|=n(n,A,f)/((n,A)(n,f))$ and so $\bar{G}$ admits a metacyclic extension
\[\Z_{\frac{|r^m-s^m|(n,f)}{n}}\hookrightarrow \bar{G} \twoheadrightarrow \Z_{\frac{n(n,A,f)}{(n,A)(n,f)}} \]
so it has a presentation of the form $B( \frac{|r^m-s^m|(n,f)}{n}, \frac{n(n,A,f)}{(n,A)(n,f)}, R, \lambda)$ for some $R,\lambda$.

Writing $d=(n,f)$, $\delta=(n,f)/(n,A,f)$ we shall show that $G$ is isomorphic to $B(d(r^m-s^m)/n,m/\delta,p,\lambda)$ where
\[\lambda =\frac{Q}{r^m-s^m}\left( \frac{d(r^m-s^m)}{n}, p-1\right)\]
where  $p=(s\hat{r})^{f\bar{\alpha}/(n,A,f)}$, $Q=\frac{(n,A)}{(n,A,f)}\sum_{i=0}^{m/\delta-1} p^i$. Then since the finite group $G=\mathrm{ker}\nu^f|_M$ has a balanced presentation, and hence $H_2(\bar{G})=0$ (so $\bar{G}$ is a Schur group), it follows from~\cite[page~414]{Beyl73} that
\[B(d(r^m-s^m)/n,m/\delta,p,\lambda)\cong B(d(r^m-s^m)/n,m/\delta,p,1)\]
as required. The group $B(d(r^m-s^m)/n,m/\delta,p,\lambda)$ has a presentation
\begin{alignat}{1}
\pres{a,b}{a^{d(r^m-s^m)/n},bab^{-1}=a^p, b^{m/\delta}=a^{dQ/n}}.\label{eq:BillsBeyl}
\end{alignat}
The hypothesis $f(r-s)\equiv 0$~mod~$n$ implies that $n/d$ is a divisor of $(r^m-s^m)$  and so the exponent ${d(r^m-s^m)/n}$ in this presentation is an integer; the exponent $m/\delta = \frac{(n,fn/(n,A))}{(n,f)}$ so too is an integer. That the exponent ${dQ/n}$ is an also an integer, or equivalently that $Q\equiv 0$~mod~$n/d$, is less obvious, so we now explain this. The hypothesis $f(r-s)\equiv 0$~mod~$n$ implies that $r\equiv s$~mod~$n/d$. Since also $r\hat{r}\equiv 1$~mod~$(r^m-s^m)$ and $n/d$ is a divisor of $(r^m-s^m)$ we have $r\hat{r}\equiv 1$~mod~$n/d$. Therefore $p=(s\hat{r})^{f\bar{\alpha}/(n,A,f)}\equiv (r\hat{r})^{f\bar{\alpha}/(n,A,f)}\equiv 1$~mod~$n/d$ and hence
\[Q=\frac{(n,A)}{(n,A,f)}\sum_{i=0}^{m/\delta-1}p^i\equiv \frac{(n,A)}{(n,A,f)}\cdot \frac{m}{\delta} \equiv 0~\mathrm{mod}~n/d.\]
Let $\zeta=u^{n/d}$, $\eta=u^{(n,A)/(n,A,f)}v^{-f/(n,A,f)}$. Then $\zeta\in \mathrm{ker}\nu^0|_M\cap \mathrm{ker}\nu^f|_M=N$ and $\eta\in \mathrm{ker}\nu^f|_M=\bar{G}$.
We now show that the relations of the presentation~(\ref{eq:BillsBeyl}) are satisfied for $\zeta,\eta$ in $M$, and hence in $\bar{G}$. Of course $\zeta^{d(r^m-s^m)/n}=u^{r^m-s^m}=1$, giving the first relation. Next note that the relator $u^rv^{\alpha}u^{-s}v^{-\alpha}$ of $M$ implies that $v^{-\alpha}u^rv^\alpha=u^s$ so
\begin{alignat*}{1}
v^{-f/(n,A,f)}uv^{f/(n,A,f)}
&=v^{-\alpha f\bar{\alpha}/(n,A,f)}u^{r\hat{r}}v^{\alpha f\bar{\alpha}/(n,A,f)}\\
&=v^{-\alpha (f\bar{\alpha}/(n,A,f)-1)}(v^{-\alpha}u^{r}v^\alpha)^{\hat{r}} v^{\alpha(f\bar{\alpha}/(n,A,f)-1)}\\
&=v^{-\alpha (f\bar{\alpha}/(n,A,f)-1)}u^{\hat{r}s} v^{\alpha(f\bar{\alpha}/(n,A,f)-1)}\\\displaybreak[0]
&=v^{-\alpha (f\bar{\alpha}/(n,A,f)-2)}(v^{-\alpha}u v^\alpha)^{\hat{r}s}v^{\alpha(f\bar{\alpha}/(n,A,f)-2)}\\\displaybreak[0]
&=v^{-\alpha (f\bar{\alpha}/(n,A,f)-2)}(v^{-\alpha}u^{r\hat{r}} v^\alpha)^{\hat{r}s}v^{\alpha(f\bar{\alpha}/(n,A,f)-2)}\\\displaybreak[0]
&=v^{-\alpha (f\bar{\alpha}/(n,A,f)-2)}(v^{-\alpha}u^{r} v^\alpha)^{\hat{r}^2s}v^{\alpha(f\bar{\alpha}/(n,A,f)-2)}\\
&=v^{-\alpha (f\bar{\alpha}/(n,A,f)-2)}u^{(\hat{r}s)^2}v^{\alpha(f\bar{\alpha}/(n,A,f)-2)}\\
&=\cdots\\
&=u^{(\hat{r}s)^{f\bar{\alpha}/(n,A,f)}}=u^p.
\end{alignat*}
Then
\begin{alignat*}{1}
\eta\zeta\eta^{-1}&=(u^{(n,A)/(n,A,f)}v^{-f/(n,A,f)})u^{n/d}(u^{(n,A)/(n,A,f)}v^{-f/(n,A,f)})^{-1}\\
&=u^{(n,A)/(n,A,f)}\left(v^{-f/(n,A,f)}uv^{f/(n,A,f)}\right)^{n/d}u^{-(n,A)/(n,A,f)}\\
&=u^{(n,A)/(n,A,f)}u^{np/d}u^{-(n,A)/(n,A,f)}\\
&=u^{np/d}=\zeta^p
\end{alignat*}
giving the second relation of~(\ref{eq:BillsBeyl}). Next
\begin{alignat*}{1}
\eta^{m/\delta}
&=(u^{(n,A)/(n,A,f)}v^{-f/(n,A,f)})^{m/\delta}\\
&=u^{(n,A)/(n,A,f)}(v^{-f/(n,A,f)}u^{(n,A)/(n,A,f)}v^{f/(n,A,f)})\cdot\\
&\quad \quad (v^{-2f/(n,A,f)}u^{(n,A)/(n,A,f)}v^{2f/(n,A,f)})\cdot \\\displaybreak[0]
&\qquad \qquad \ldots \cdot (v^{-(m/\delta-1)f/(n,A,f)}u^{(n,A)/(n,A,f)}v^{(m/\delta-1)f/(n,A,f)})\cdot v^{-mf/\delta}\\
&=\prod_{i=0}^{m/\delta-1}\left(v^{-if/(n,A,f)} u v^{if/(n,A,f)}\right)^{(n,A)/(n,A,f)}.
\end{alignat*}
Now
\begin{alignat*}{1}
v^{-if/(n,A,f)}uv^{if/(n,A,f)}& = v^{-(i-1)f/(n,A,f)}(v^{-f/(n,A,f)}uv^{f/(n,A,f)})v^{(i-1)f/(n,A,f)}\\
&= v^{-(i-1)f/(n,A,f)}u^pv^{(i-1)f/(n,A,f)}\\
&= (v^{-(i-1)f/(n,A,f)}uv^{(i-1)f/(n,A,f)})^p\\
&= \cdots\\
&= u^{p^i}
\end{alignat*}
so
\[\eta^{m/\delta}=\prod_{i=0}^{m/\delta-1}u^{\frac{(n,A)}{(n,A,f)}p^i}=u^{\frac{(n,A)}{(n,A,f)}\sum_{i=0}^{m/\delta-1}p^i}=u^Q.\]
Therefore $\eta^{m/\delta}=u^Q=\zeta^{dQ/n}$, giving the third relation of~(\ref{eq:BillsBeyl}).

Therefore, if $\bar{H}$ is the group defined by the presentation~(\ref{eq:BillsBeyl}) we have shown that there is a homomorphism $\phi: \bar{H} \rightarrow \bar{G}$ given by $a \mapsto \zeta, b\mapsto \eta$. To complete the proof we must show that $\phi$ is an isomorphism. Now $bab^{-1}=a^p$ so the normal closure of $a$ in $\bar{H}$ is the cyclic group generated by $a$, which is of order at most $d|r^m-s^m|/n$. Further, the quotient of $\bar{H}$ by this normal closure is $\pres{b}{b^{m/\delta}}\cong \Z_{m/\delta}$ so $\bar{H}$ has order at most $(d|r^m-s^m|/n) (m/\delta)=|\bar{G}|$. Therefore, if $\phi$ is surjective then it follows that $\bar{H} \cong \bar{G}$ so it suffices to show that $\bar{G}$ is generated by $\zeta$ and $\eta$. We have ${\eta^l}\in N$ if and only if
\[ \nu^0|_M((u^{(n,A)/(n,A,f)}v^{-f/(n,A,f)})^l)=1\]
and
\[ \nu^f|_M((u^{(n,A)/(n,A,f)}v^{-f/(n,A,f)})^l)=1.\]
That is, if and only if
\[v^{-fl/(n,A,f)}=1 \Leftrightarrow m|\frac{fl}{(n,A,f)} \Leftrightarrow \frac{m}{\delta}| \frac{f}{d} l \Leftrightarrow \frac{m}{\delta}| l\]
since $(m/\delta,f/d)=1$. That is, $(\eta N)\in \bar{G}/N$ has order $m/\delta$ in $\bar{G}/N$ so the cosets $N,\eta N,\ldots , \eta^{m/\delta-1}N$ are distinct. In total there are $|N|m/\delta=|r^m-s^m|(n,A,f)/(n,A)=|\bar{G}|$ elements in these cosets. Since also $N$ is generated by $\zeta$ we have that $\bar{G}$ is generated by $\zeta$ and $\eta$, as required.
\end{proof}

We now turn to the proof of Corollary~\ref{maincor:EWgroupswithC}.
When (C) holds but (A) does not, the following lemma puts the group $G_n(x_0x_kx_l)$ in a form to which Theorem~\ref{mainthm:Mgroups} can be readily applied.

\begin{lemma}\label{lem:CTrueAFalseGpsNew}
Suppose $(n,k,l)=1$, $k\neq l$, $k,l\neq 0$. If (C) holds but (A) does not then $G_n(x_0x_kx_l)$ is isomorphic to $G_n(x_0x_{n/3}x_{1+2n/3})$.
\end{lemma}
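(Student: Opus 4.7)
The plan is a two-stage reduction exploiting three standard symmetries of $G_n(x_0 x_k x_l)$: (i) cyclic rotation of the defining relator combined with a shift (as in Example~\ref{ex:ConditionC}), giving $G_n(x_0 x_k x_l)\cong G_n(x_0 x_{l-k} x_{n-k})$ and $G_n(x_0 x_k x_l)\cong G_n(x_0 x_{n-l} x_{k-l})$; (ii) inversion of the relator combined with the automorphism $x_i\mapsto x_i^{-1}$ which, after a cyclic rotation, produces the swap $G_n(x_0 x_k x_l)\cong G_n(x_0 x_l x_k)$; and (iii) multiplication of subscripts by a unit $u\in(\Z/n)^\times$ via $x_i\mapsto x_{ui}$. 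A preliminary observation is that $3\mid n$: if $3\nmid n$ then any clause of (C) would force $k\equiv 0$, $l\equiv 0$, or $k\equiv l$ mod~$n$, contradicting the hypotheses. Setting $m=n/3$, the target is $G_n(x_0 x_m x_{2m+1})$.

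Stage one reduces to $k=m$. If (C) holds as $3l\equiv 0$ mod~$n$, the swap interchanges $k$ and $l$; if (C) holds as $3(l-k)\equiv 0$ mod~$n$, the rotation $(k,l)\mapsto(l-k,n-k)$ converts this to $3k'\equiv 0$. In either event we may assume $3k\equiv 0$ mod~$n$, hence $k\in\{m,2m\}$, and the $u=-1$ substitution $(k,l)\mapsto(n-k,n-l)$ converts $k=2m$ to $k=m$. Routine checks confirm that each transformation preserves $(n,k,l)=1$, $k+l\not\equiv 0$ mod~$3$, $k,l\neq 0$, and $k\neq l$.

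Stage two works within $k=m$ using two sub-operations that both preserve this: (a) $l\mapsto m-l$, obtained by composing the rotation $G_n(x_0 x_m x_l)\cong G_n(x_0 x_{l-m} x_{2m})$ with a swap and then the $u=-1$ substitution; and (b) $l\mapsto ul$ for $u\in(\Z/n)^\times$ with $u\equiv 1$ mod~$3$, which is exactly the stabilizer condition $u\cdot m\equiv m$ mod~$n$. The hypothesis $l+m\not\equiv 0$ mod~$3$ rules out $2l\equiv m$ mod~$3$, so $\{l,m-l\}$ consists of two distinct residues mod~$3$ that sum to $m$; a direct case check matches this pair with $\{2m+1,-m-1\}$ mod~$3$. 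After applying (a) once if necessary, we may therefore assume $l\equiv 2m+1$ mod~$3$.

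The final step is to pick a unit $u$ with $ul\equiv 2m+1$ mod~$n$ and $u\equiv 1$ mod~$3$. Since $(l,m)=1$, the congruence $ul\equiv 2m+1\equiv 1$ mod~$m$ forces $u\equiv l^{-1}$ mod~$m$. When $(3,m)=1$, the Chinese Remainder Theorem on $\Z_n\cong\Z_3\times\Z_m$ produces a unique $u$ mod~$n$ meeting both conditions, and it is automatically a unit. When $3\mid m$, the residue condition $l\equiv 2m+1\equiv 1$ mod~$3$ combined with $(l,m)=1$ forces $(l,n)=1$, so one simply sets $u=(2m+1)l^{-1}\bmod n$ and verifies $u\equiv 1$ mod~$3$. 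The main obstacle I anticipate is precisely this case split in the CRT step, ensuring the arithmetic works whether or not $3$ divides $m$; once navigated, operation~(b) realizes $l\mapsto 2m+1$ mod~$n$ and the lemma follows.
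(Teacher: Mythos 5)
Your argument is correct, but it organizes the reduction differently from the paper's. The paper normalizes the second subscript first: quoting from~\cite{EW10} the fact that the hypotheses force one of $(n,k)$, $(n,l)$, $(n,k-l)$ to equal $1$, it uses the symmetries to assume $(n,l)=1$ and a subscript automorphism to set $l=1+2n/3$; condition~(C) then leaves four possible values of $k$ (namely $n/3$, $n/3+1$, $2n/3$, $1$), the last two of which coincide and are handled by a further split on $2n/3$ modulo $3$ with explicitly exhibited unit multipliers. You instead normalize the first subscript to $n/3$ directly from~(C) via the swap and rotation, then use the failure of~(A) to show that $l$ and $m-l$ lie in distinct residue classes modulo $3$ whose unordered pair is forced to coincide with that of $\{2m+1,\,-m-1\}$, and finally manufacture the required unit $u\equiv 1 \bmod 3$ by the Chinese Remainder Theorem (or directly when $3\mid m$). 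I checked the delicate points: each of your moves does preserve $(n,k,l)=1$, $k,l\neq 0$, $k\neq l$ and $k+l\not\equiv 0 \bmod 3$; the identity $(n,n/3,l)=(n/3,l)$ gives the coprimality you need for the CRT step; and $u\equiv 1\bmod 3$ is exactly what keeps $k=n/3$ fixed. The two proofs draw on the same toolbox of symmetries, but yours replaces the paper's four-way case analysis and the imported gcd fact from~\cite{EW10} with a uniform residue-plus-CRT argument, which is arguably cleaner; the paper's version is more readily verified by direct computation in each case.
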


\begin{proof}
As in~\cite{EW10} we shall write $G_n(k,l)=G_n(x_0x_kx_l)$. As stated in~\cite[page~765]{EW10} the given conditions imply that $(n,k)=1$ or $(n,l)=1$ or $(n,k-l)=1$. By the symmetries of~\cite[Lemma~2.1(ii),(iv)]{EW10} we may assume $(n,l)=1$ and then by applying an automorphism of $\Z_n$ we may assume $l=1+2n/3$.
Observe that
\begin{alignat}{1}
&\ G_n(1+n/3,1+2n/3) \overset{(ii)}{\cong} G_n(1+2n/3,1+n/3) \overset{(iv)}{\cong} G_n(1+2n/3,n/3) \nonumber\\
&\quad \overset{(ii)}{\cong} G_n(n/3,1+2n/3) \overset{\times -1}{\cong}G_n(2n/3,n/3-1)\overset{(iv)}{\cong} G_n(2n/3,1+n/3) \label{eq:isoms}
\end{alignat}
(where the superscripts correspond to isomorphisms given in~\cite[Lemma~2.1]{EW10}, or multiplying subscripts of the relators by $-1$). In particular
\begin{alignat}{1}
G_n(1+n/3,1+2n/3), G_n(2n/3,n/3-1), G_n(2n/3,n/3+1)\label{eq:isomgps}
\end{alignat}
are all isomorphic to $G_n(n/3,2n/3+1)$.

Since~(C) is true we have that $3k\equiv 0$~mod~$n$ or $3(l-k)\equiv 0$~mod~$n$ so for $G=G_n(k,1+2n/3)$ we have $k=n/3,2n/3,1,$ or $n/3+1$.
If $k=n/3$ then we are done. If $k=n/3+1$ then $G=G_n(1+n/3,1+2n/3)$ and, as noted at~(\ref{eq:isomgps}), $G\cong G_n(n/3,1+2n/3)$.
Thus we need to consider the cases $k=2n/3$, $k=1$. These are the same case, however, since
\[G_n(2n/3,1+2n/3)\overset{(ii)}{\cong} G_n(1+2n/3,2n/3) \overset{(iv)}{\cong} G_n(1+2n/3,1) \overset{(ii)}{\cong} G_n(1,1+2n/3)\]
so we may assume $k=2n/3$, so $G=G_n(2n/3,1+2n/3)$. Now since (A) does not hold we have that $2n/3 \equiv 0,2$~mod~$3$.

Suppose $2n/3\equiv 0$~mod~$3$; then $9|n$. Then $(1+2n/3,n)=1$ so the map $x_i\mapsto x_{(1+2n/3)i}$ is an automorphism of $G_n(2n/3,1+2n/3)$. This transforms the relators $x_ix_{i+2n/3}x_{i+2n/3+1}$ to $x_ix_{i+4n^2/9+2n/3}x_{i+4n^2/9+4n/3+1}$, ie to $x_ix_{i+2n/3}x_{i+1+n/3}$ which are the relators of $G_n(2n/3,1+n/3)$ which, as observed at (\ref{eq:isoms}), is isomorphic to $G_n(n/3,1+2n/3)$.

Suppose then that $2n/3\equiv 2$~mod~$3$; then $9|(2n-6)$. Then $(2n/3-1,n)=1$ so the map $x_i\mapsto x_{(2n/3-1)i}$ is an automorphism of $G_n(2n/3,1+2n/3)$. This transforms the relators $x_ix_{i+2n/3}x_{i+2n/3+1}$ to $x_ix_{i+(2n/3)(2n/3-1)}x_{i+(2n/3+1)(2n/3-1)}$, ie to $x_ix_{i+(2n-6)2n/9+2n/3}x_{i+(2n-6)2n/9+4n/3-1}$, ie $x_ix_{i+2n/3}x_{i+n/3-1}$, which are the relators of $G_n(2n/3,n/3-1)$, which as observed at (\ref{eq:isoms}), is isomorphic to $G_n(n/3,1+2n/3)$.
\end{proof}

\begin{proof}[Proof of Corollary~\ref{maincor:EWgroupswithC}]
Let $G=G_n(x_0x_kx_l)$ where $(n,k,l)=1$ and suppose that~(C) holds.

\noindent (i) Suppose that~(A) does not hold and $k \not \equiv 0, l \not \equiv 0, k \not \equiv l$~mod~$n$. Then Lemma~\ref{lem:CTrueAFalseGpsNew} implies that $G$ is isomorphic to $G_n(x_0x_{n/3}x_{1+2n/3})$ which is the group $G_n(w)$ with $w$ as given at~(\ref{eqn:OurCyclicw}) with parameters $(r,n,s,f,A)=(2,n,-1,n/3,1)$. The hypotheses of Theorem~\ref{mainthm:Mgroups} are satisfied
and we have $r^n-s^n=2^n-(-1)^n$, $(n,f)=n/3$, $\bar{\alpha}=1$, $\hat{r}=(-1)^n 2^{n-1}$ so  $G\cong B((2^n-(-1)^n)/3,3,p,1)$, where $p=2^{n(n-1)/3}$. It remains to show that $p$ is congruent to $2^{2n/3}$ mod~$(2^n-(-1)^n)/3$.
Working to this base, we have $2^n-(-1)^n\equiv 0$ so $2^n\equiv (-1)^n$ and hence $2^{n(n/3-1)}\equiv (-1)^{n(n/3-1)}\equiv 1$. Therefore $2^{n(n-1)/3}=2^{n(n/3-1)}2^{2n/3}\equiv 2^{2n/3}$, and we are done.

\noindent (ii) Suppose that $k \equiv 0, l \equiv 0$ or $k \equiv l$~mod~$n$. Then since $(n,k,l)=1$ it is easy to see that $G\cong G_n(x_0^2x_1)$ which is the group $G_n(w)$ with $w$ as given at~(\ref{eqn:OurCyclicw}) with parameters $(r,n,s,f,A)=(2,n,-1,0,1)$ so by Theorem~\ref{mainthm:Mgroups} we have that $G\cong B(2^n-(-1)^n,n,1,1)\cong \Z_{2^n-(-1)^n}$.

\noindent (iii) Suppose that~(C) holds and~(A) does not. Then, as explained in~\cite[page~763]{EW10}, we have that $G\cong G_{3m}(x_0x_1x_m)$ or $G\cong G_{3m}(x_0x_1x_{m+1})$ ($m\geq 1$). Now we have that $G_{3m}(x_0x_1x_m)\cong G_{3m}(x_0^{-1}x_m^{-1}x_1^{-1})\cong G_{3m}(x_0x_mx_1)$ and that $G_{3m}(x_0x_1x_{m+1})\cong G_{3m}(x_1x_{m+1}x_0)\cong G_{3m}(x_0x_{m}x_{3m-1})$, so $G\cong G_{3m}(x_0x_mx_{\epsilon})$ ($\epsilon=\pm 1$).
Setting $n=3m$, $r=2$, $s=-1$, $f=m$, $A=m+\epsilon$ and comparing the defining relator $x_0x_mx_{\epsilon}$ with the word shape~(\ref{eqn:OurCyclicw}) we see that $G$ lies in the class $\mathfrak{M}$. Since (A) holds we have that $A=m+\epsilon\equiv 0$~mod~$3$ so we write $A=3j$. Since $(m,j)=(3j-\epsilon,j)=1$ we have that $(n,A)=(3m,3j)=3$ and $(n,A,gf)=(3m,3j,m)=1$. In addition we have $n(n,A,gf)=3m=(n,A)(n,gf)$ as in condition~(\ref{eqn:cyclicCondition}). By Theorem~\ref{mainthm:Mgroups} it follows that $G \cong \Z_\gamma \ast F_k$ where $\gamma = \mu (n,A,f)/(n,A) = (2^m-(-1)^m)/3$ and $k=(n,A)-(n,A,f) = 3-1 = 2$.
\end{proof}

\section{Fixed Points for the Shift on Finite Groups $G$ in the Class $\mathfrak{M}$}\label{sec:FixedPoints}

We now prove Theorem~\ref{thm:fpf}.

\begin{proof}[Proof of Theorem~\ref{thm:fpf}]
Since $G$ is finite and nontrivial, Corollary~\ref{cor:Gshort} implies that  $(n,A) = (r,s) = 1$. Both $G$ and $\Gamma$ (see~(\ref{eqn:parameters})) are left $\Z_n$-sets under the action of their respective shifts, which we denote by $\theta_G$ and $\theta_\Gamma$ respectively. By~\cite[Lemma 2.2]{Bogley14}, both $G$ and $\Gamma$ are isomorphic as $\Z_n=\gpres{t}$-sets to the coset space $E/\gpres{t}$ with its natural left action via multiplication in $E$. Thus $G$ and $\Gamma$ are isomorphic as $\Z_n$-sets, and so given $j$ modulo~$n$, there is a bijective correspondence between the fixed point sets $\mathrm{Fix}(\theta_G^j)$ and $\mathrm{Fix}(\theta_\Gamma^j)$. (Note that although these fixed point sets are subgroups within $E$, the bijection referred to here is not generally a group homomorphism.) Next, the fact that $(n,A) = 1$ implies that $\mathrm{Fix}(\theta_\Gamma^j) = \mathrm{Fix}(\theta_\Gamma^{jA})$ and that there is a group isomorphism $\phi: \Gamma_0 = G_n(x_0^rx_1^{-s}) \rightarrow G_n(x_0^rx_A^{-s}) =  \Gamma$ given by $\phi(x_i) = x_{iA}$ and satisfying $\phi \circ \theta_{\Gamma_0} = \theta_\Gamma^A \circ \phi$. Taken together, these observations imply that we have (setwise) bijections and equalities as follows:
$$
\mathrm{Fix}(\theta_G^j) \simeq \mathrm{Fix}(\theta_\Gamma^j) = \mathrm{Fix}(\theta_\Gamma^{jA}) \simeq \mathrm{Fix}(\theta_{\Gamma_0}^j).
$$
Since $(r,s)=1$, Lemma~\ref{lem:Pridescyclic} implies that $\Gamma_0$ is cyclic of order $\mu = |r^n-s^n|$,  generated by any of its generators $x_i$; further, we can choose integers $a,b$ such that $ar^n + bs = 1$. Setting $\beta = r(s^{n-1}a+b)$, the proof of Lemma~\ref{lem:Pridescyclic} (see~\cite[Lemma 3.4]{BW15}) shows that $\theta(x_i) = x_{i+1} = x_i^\beta$. Working modulo $\mu = |r^n-s^n|$, if $x_i^k \in \Gamma_0$ then $\theta^j(x_i^k) = x_i^k$ if and only if $k$ is equivalent to $\beta^j k$ i.e.\,to
\[ r^j(s^{n-1}a+b)^jk \equiv r^j\left(s^{n-1}a+\frac{1-ar^n}{s}\right)^jk \equiv \frac{r^j}{s^j}\left(s^{n}a+1-ar^n\right)^jk
\equiv \frac{r^j}{s^j}k
\]
which is equivalent to the assertion that $(r^j-s^j)k \equiv 0$~mod~$\mu$. Thus, considering the shift $\theta_{\Gamma_0}$ on $\Gamma_0 = G_n(x_0^rx_1^{-s})$, the fixed point subgroup for $\theta_{\Gamma_0}^j$ is $\mathrm{Fix}(\theta_{\Gamma_0}^j) = \gpres{x_0^{\mu/(r^n-s^n, r^j-s^j)} }= \gpres{x_0^{\mu/(r^{(n,j)}-s^{(n,j)})}}$, which has order $|r^{(n,j)}-s^{(n,j)}|$. Translating to $G = G_n(w)$, the fixed point subgroup for $\theta_G^j$ has the indicated order.

If $\theta_G^j$ is fixed point free then we have that $|r^{(n,j)}-s^{(n,j)}|=1$ so $(n,j)=1$ and $|r-s|=1$ and so the only value of $f$ that satisfies $f(r-s) \equiv 0$~mod~$n$ is $f \equiv 0$~mod~$n$ so $G \cong G_n(x_0^rx_A^{-s})$. Since $G$ is finite we have that $(A,n)=1$ so  $G \cong G_n(x_0^rx_1^{-s}) \cong \Z_\mu$ where $\mu = |r^n-s^n|$.
Conversely, if $(n,j)=|r-s|=1$ then $|\mathrm{Fix}(\theta_G^j)|=1$ so $\theta_G^j$ is fixed point free.
\end{proof}

In the next example we use Theorem~\ref{thm:fpf} to show that every group $G$ in the class $\mathfrak{M}$ of order 125 is cyclic. The significance of this will be made clear in the subsequent discussion.

\begin{example}[Groups in~$\mathfrak{M}$ of order 125 are cyclic]\label{ex:125}
\em
Let $G$ be a group in the class $\mathfrak{M}$ with parameters $(r,n,s,f,A)$ and order~125. We shall show that $G$ is cyclic and the shift automorphism $\theta$ is either fixed point free or is the identity map. Let $H=\mathrm{Fix}(\theta)\leq G$. If $H=1$ then $G$ is cyclic, by Theorem~\ref{thm:fpf}; if $H=G$ then $\theta$ is the identity map so all generators of the cyclic presentation for $G$ represent the same element so $G$ is cyclic. We now show that these are the only possibilities; that is, $H$ cannot be a proper, nontrivial subgroup of $G$. We do this by showing that if $|H|=5$ or $25$ then $r$ and $s$ are both divisible by 5, which contradicts $(r,s)=1$ (which holds since $G$ is finite).

Suppose $|H|=5$. Then $|r-s|=5$ (by Theorem~\ref{thm:fpf}) so $r=s+5\epsilon$ ($\epsilon = \pm 1$) and
\[125= |G|=|(s+5\epsilon)^n-s^n| = \big| 5 \epsilon ns^{n-1} + \sum_{j=2}^{n} \binom{n}{j}(5\epsilon)^js^{n-j}\big|\]
or equivalently
\begin{equation}
  5\epsilon ns^{n-1} = \pm 125 - \sum_{j=2}^{n} \binom{n}{j}(5\epsilon)^js^{n-j}.\label{eq:binom125}
\end{equation}
We may assume $s$ is not divisible by $5$, for otherwise $5|(r,s)=1$ so~(\ref{eq:binom125}) implies that $n$ is divisible by 5. Therefore $5| \binom{n}{2}$ so~(\ref{eq:binom125}) implies that $n$ is divisible by 25. Therefore $125=|r^n-s^n|$ is divisible by $|r^{25}-s^{25}|=|(s+5\epsilon)^{25}-s^{25}|$, and for each $\epsilon = \pm 1$ the minimum value of $|(s+5\epsilon)^{25}-s^{25}|$ is greater than 25, and we have a contradiction.

Suppose then that $|H|=25$. Then $|r-s|=25$ and as above we get
\begin{equation}
  25\epsilon ns^{n-1} = \pm 125 - \sum_{j=2}^{n} \binom{n}{j}(25\epsilon)^js^{n-j}\label{eq:binom25}
\end{equation}
so again $5|n$. Then $125=|r^n-s^n|$ is divisible by $|r^5-s^5|=|(s+25\epsilon)^5-s^5|$, but the minimum value of $|(s+25\epsilon)^5-s^5|$ is again greater than 125, so we have a contradiction.
\em
\end{example}

In this article we have investigated structural properties of both finite and infinite cyclically presented groups in the class~$\mathfrak{M}$. As noted in the Introduction, the class~$\mathfrak{M}$ encompasses many families of finite metacyclic generalized Fibonacci groups that were previously identified in the literature. Notable exceptions to this are the following families of groups:
\begin{alignat*}{1}
H(2k+1,4,2) &= G_4\left(\left(\prod_{i=0}^{2k} x_i\right)\left( x_{2k+1}x_{2k+2}\right)^{-1}\right),\\
F(4l+2,4)&=G_4\left( \left( \prod_{i=0}^{4l+1}x_i \right)x_{4l+2}^{-1}\right).
\end{alignat*}
The groups $H(2k+1,4,2)$ were conjectured to be finite and metacyclic in~\cite{CR75LMS} and this was proved in~\cite{Brunner74}. (The paper~\cite{Brunner74} also gives a formula for the group orders but this, and the last few lines of the proof, are incorrect.) The groups $F(4l+2,4)$ were shown in~\cite{Seal82} to be metabelian groups of order $(4l+1)(2^{4l+1}+(-1)^l2^{2l+1}+1)$ with $F(4l+2,4)^\mathrm{ab}\cong \Z_5\times \Z_{4l+1}$. It was later established in~\cite{Thomas90} that $F(4l+2,4)$ and $H(4l+3,4,2)$ are isomorphic (therefore providing the orders of $H(4l+3,4,2)$). The orders of the groups $H(2k+1,4,2)$, and the fact that they are metacyclic, are contained in~\cite[Corollary~E]{BW15}, which provides that if $n = 4$ or $6$, $(m,k) = 1$, and $m = \pm 1$~mod~$n$, then the group $J_n(m,k) = \pres{t,y}{t^n,y^{m-k}t^3y^kt^2}$ contains a unique normal, metacyclic (and cyclically presented) subgroup of index~$n$ and known order. In particular, considering the shift extension we have $H(2k+1,4,2) \rtimes_\theta \Z_4 = J_4(2k-1,-2)$ and $|H(2k+1,4,2)|=(2k-1)(2^{2k-1}-(-1)^{(k+1)(k+2)/2}2^k+1)$.

There is an overlap between the cyclically presented groups in the class~$\mathfrak{M}$ and those covered by~\cite[Corollary~E]{BW15}, as well as between their shift extensions $E(r,n,s,A) = \pres{t,y}{t^n,y^rt^Ay^{-s}t^{-A}}$ and the groups $J_n(m,k)$. For example, $E(1,2,-2,1) \cong J_6(1,1) \cong \Z_6$ and $E(2,4,-1,1) \cong J_4(3,1) \cong J_4(3,-2)$ is a nonabelian group of order $60$, with both $E(2,4,-1,1)$ and $J_4(3,-2)$ serving as the shift extension of $H(5,4,2) \cong \Z_{15}$. However, such coincidences appear to be rare exceptions. The group $J_4(5,-1)=F(6,4) \rtimes_\theta \Z_4$ is metacyclic of order $500$; this essentially appeared in~\cite{CR75LMS}, being a $\Z_4$ extension of $F(6,4)$ of order 125. Elementary arguments show that 500 cannot be expressed in the form $n|r^n-s^n|$ with $n \geq 2$ and $r,s \neq 0$, so $J_4(5,-1)$ is not isomorphic to any group of the form $E(r,n,s,A)$.

Given an arbitrary positive integer $k$, the cyclically presented group in class~$\mathfrak{M}$ corresponding to the parameters $(r,n,s,f,A) = (k+1,2, \pm k,0,1)$ is cyclic of order $(k+1)^2-k^2 = 2k+1$ so every cyclic group of odd order occurs in the class~$\mathfrak{M}$. Considering these two occurrences, when $s = k$ the shift on the group $\Z_{2k+1}$ is fixed point free, while if $s=-k$ then the shift acts as the identity. Thus although the order of the shift extension $|F(6,4) \rtimes \Z_4| = 500$ does not occur as that of the shift extension of any group in the class~$\mathfrak{M}$, the group order $|F(6,4)| = 125$ does occur as the order of a group in the class~$\mathfrak{M}$. However, Example~\ref{ex:125} shows that the only group of order 125 that occurs in the class~$\mathfrak{M}$ is cyclic. The fact that $F(6,4)$ is nonabelian therefore implies that $F(6,4)$ itself does not occur in the class~$\mathfrak{M}$. Alternatively, one can use Corollary~\ref{maincor:MgroupsCor} to prove that no cyclically presented group in~$\mathfrak{M}$ can simultaneously have order 125 and abelianisation $\Z_5\times \Z_5$ (the abelianisation of $F(6,4)$), so the group $F(6,4)$ is not in~$\mathfrak{M}$.

With the evidence so far considered, it remains possible that the only finite groups in the class~$\mathfrak{M}$ that have orders equal to those of the finite generalized Fibonacci groups occurring in~\cite[Corollary~E]{BW15} are finite cyclic. Further, we suspect that the only nontrivial finite cyclic groups that occur in~\cite[Corollary~E]{BW15} are $\Z_5$ (eg $F(2,4)$), $\Z_{15}$ (eg $H(5,4,2)$), or $\Z_{13}$ (eg $H(4,6,3)$). If both of these statements are indeed correct then it would follow that there are precisely three nontrivial finite groups that occur both in the class~$\mathfrak{M}$ and in~\cite[Corollary~E]{BW15}.

\end{document}